\documentclass[11pt]{article}
\usepackage{verbatim} 
\usepackage{graphicx}
\usepackage[utf8]{inputenc}
\usepackage{amsmath,amsfonts,amssymb,amscd,amsthm,txfonts,hyperref}
\usepackage{xcolor}
\oddsidemargin 1 cm 
\evensidemargin 1 cm 
\textwidth 15 cm 
\topmargin 0 cm 
\textheight 22.5 cm

\newtheorem{theorem}{Theorem}[section]
\newtheorem{proposition}[theorem]{Proposition}
\newtheorem{lemma}[theorem]{Lemma}
\newtheorem{corollary}[theorem]{Corollary}
\newtheorem{remark}{Remark}[section]
\newtheorem{theo}{Theorem}

\theoremstyle{definition}
\newtheorem{definition}[theorem]{Definition}

\newtheorem{assum}{Assumption}

\newcommand{\R}{\mathbb{R}}
\newcommand{\C}{\mathbb{C}}
\newcommand{\N}{\mathbb{N}}
\newcommand{\Z}{\mathbb{Z}}
\newcommand{\T}{\mathbb{T}}

\newcommand{\lap}{\bigtriangleup}

\newcommand{\E}{\mathbb E}
\renewcommand{\Re}{\mbox{Re}}
\newcommand{\du}{``du"}

\newcommand{\ent}[1]{\lfloor #1\rfloor}

\newcommand{\an}[1]{\langle #1 \rangle}

\newcommand{\grad}{\bigtriangledown}

\begin{document}

\title{Invariance of Gibbs measures under the flows of Hamiltonian equations on the real line}
\author{Federico Cacciafesta\footnote{Dipartimento di Matematica, Universit$\grave{\text{a}}$ degli studi di Padova,  Via Trieste, 63, 35131 Padova PD - Italy. -  
email: cacciafe@math.unipd.it}, Anne-Sophie de Suzzoni\footnote{Universit\'e Paris 13, Sorbonne Paris Cit\'e, LAGA, CNRS ( UMR 7539), 99, avenue Jean-Baptiste Cl\'ement, F-93430 Villetaneuse, France - email: adesuzzo@math.univ-paris13.fr}}

\maketitle

\begin{abstract} 
We prove that the Gibbs measures $\rho$ for a class of Hamiltonian equations written as \begin{equation}\label{theeq}\partial_t u = J (-\lap u + V'(|u|^2)u)\end{equation} on the real line are invariant under the flow of \eqref{theeq} in the sense that there exist random variables $X(t)$ whose laws are $\rho$ (thus independent from $t$) and such that $t\mapsto X(t)$ is a solution to \eqref{theeq}. Besides, for all $t$, $X(t)$ is almost surely not in $L^2$ which provides as a direct consequence the existence of global weak solutions for initial data not in $L^2$. The proof uses Prokhorov's theorem, Skorohod's theorem, as in the strategy in \cite{burqtzv} and Feynman-Kac's integrals.
\end{abstract}

\tableofcontents

\section{Introduction and setting}

\subsection{Introduction}

We prove the invariance of Gibbs measures on $\R$ under the flow of Hamiltonian equations using Feynman-Kac's theorem.

The problem is the following. We have a Hamiltonian equation that writes 
\begin{equation}\label{hamileq2}
 \partial_t u = J \grad_{\overline u }H(u)
\end{equation}
where $J$ is a skew symmetric (or anti Hermitian) operator and 
$$
H(u) = -\frac12 \int \overline u \lap u + \frac12\int V(|u|^2)
$$
is the Hamiltonian of the equation and displays a purely kinetic part $-\frac12 \int \overline u \lap u$ and a potential one $\frac12\int V(|u|^2)$. The equation \eqref{hamileq} can be written as
\begin{equation}\label{hamileq}
\partial_t u = J(-\lap u + V'(|u|^2)u).
\end{equation}
Under these assumptions, the mass $M(u) = \frac12 \int |u|^2$ is conserved under the flow of \eqref{hamileq}. We assume that the equation is defocusing in the sense that $V$ is non negative.

The type of equation that we have is mind is the non linear Schr\"odinger equation on $\R$ in the case when $u$ is complex valued and the modified Korteweg de Vries equation when $u$ is real valued.

We prove that the Gibbs measure $e^{-H(u) - M(u)} \du$ is invariant under the flow of \eqref{hamileq}.

The study of Gibbs measures and their invariance under the flows of Hamiltonian equations with related consequences on deterministic well-posedness of nonlinear models represents a very active research field with very large literature, and an attempt to give a complete picture of it is out of the scope of this manuscript. The interest began with the seminal paper by Lebowitz, Rose and Speer \cite{LebRosSpe}, that first constructed and studied the equilibrium of Gibbs measures associated to some nonlinear Schr\"odinger equation with periodic boundary conditions, and was subsequently enhanced by the many contributions by Bourgain, that first suggested, among the other things, the connection between existence of Gibbs measures and the related deterministic dynamics. In his pioneering paper \cite{bou1} he proved in particular a "strong invariance" of the Gibbs measure $\rho$ associated to the  nonlinear Schr\"odinger equation on the 1-dimensional torus, meaning that the flow $\psi(t)$ of the  equation is $\rho$-
almost surely globally well-posed and for all $\rho$ measurable set $A$ and for all times $t \in \R$,
$$
\rho ( \psi(t)^{-1} (A)) = \rho(A).
$$
See also \cite{BouBul,nahstaff,ORT,ohtzv,staffilani,Zhidkov} for related results. 
Among the possible applications of the existence of a strongly invariant measures, we mention the fact (remarked by Bourgain) that they can be used as "conserved quantities" to globalize solutions to the corresponding PDE defined only for local times. The strategy for building invariant measures, at its core, consists in looking at a PDE as an infinite dimensional Hamiltonian system, approaching it with a sequence of finite dimensional ones, use Liouville's Theorem to get finite dimensional invariance and then pass to the limit. A key role in this argument is played by the Fourier transform, that being discrete in the compact setting allows, after truncating, to define the "natural" sequence of finite dimension approaching equations. Therefore, the natural problem of extending this kind of results to the non compact setting, that has subsequently been addressed by many authors (see \cite{bouinf,benoh,randoh1,luhmen,randoh2} and references therein) is, in general, remarkably more difficult. In \cite{
burthotzve} the authors proved invariance under the flow of the Schr\"odinger equation with a quadratic potential on $\R$, using the fact that $-\lap + |x|^2$ has a discrete spectrum and so, in a way, bypassing the problem of infinite volume. We also mention  \cite{mckeanvan,xu} in which the finite speed of propagation is exploited to deal with the wave equation, and \cite{CdS1,ansokg} in which the non linearity is localised. We also stress the fact that in dimension $2$ or higher these problems present more difficulties (see for instance \cite{bourgaindim2,BouBul,deng,OhTho}), as the invariant measure is supported on spaces for which no good control on the flows is available, and therefore the connection with the deterministic dynamics seems to be weaker. We should also mention the papers \cite{burq1,burq2}, in which the authors suggest a further application of invariant measures, using them to prove supercritical well-posedness for some nonlinear wave equation.


%
%
%
%

We do not hope to achieve a general result such as the "strong invariance" discussed above for our generic equation \eqref{hamileq} on $\R$. What we prove is the following theorem.

\begin{theo}\label{teo1}
 Under Assumptions \ref{assum-V}, \ref{assum-J} on $J$ and $V$, there exist a non-trivial measure $\rho$ (independent from $t$), a probability space $(\Omega,\mathcal A, P)$ and a random variable $X_\infty$ with values in $\mathcal C( \R, \mathcal D')$ such that \begin{itemize}
\item for all $t\in \R$, the law of $X_\infty(t)$ is $\rho$,
\item $X_\infty$ is a weak solution (in the sense of the distributions) of \eqref{hamileq}.
\end{itemize}
What is more, $X_\infty(t)$ is almost surely an s-H\"older continuous map, for $s<\frac12$, and the law of $X_\infty(t,x)$ is absolutely continuous with respect to the Lebesgue measure and independent from $x$ and $t$.
\end{theo}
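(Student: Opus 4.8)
The plan is to follow the Burq–Thomann–Tzvetkov / Burq–Tzvetkov compactness scheme: construct a sequence of approximate problems (finite-dimensional Galerkin truncations on a bounded interval, or equivalently a sequence of equations posed on intervals $[-L,L]$ with periodic boundary conditions and projected onto finitely many Fourier modes), each of which genuinely conserves an approximate Gibbs measure $\rho_L$ by Liouville's theorem, obtain tightness of the laws of the corresponding solutions as processes in $\mathcal C(\R,\mathcal D')$, extract a weakly convergent subsequence via Prokhorov, realize the limit on a common probability space via Skorohod, and finally identify the limit as a weak solution of \eqref{hamileq} whose time marginals are the limiting measure $\rho$. The role of Feynman–Kac is to give a clean description of $\rho$ (and of the approximate $\rho_L$) as the law of a Gaussian-type process perturbed by the density $e^{-\frac12\int V(|u|^2)}$, which is what lets one pass to the limit $L\to\infty$ on the real line where there is no normalizable free Gaussian field; this is exactly the point where working on $\R$ rather than $\T$ forces a different route than Bourgain's.

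\emph{Step 1 (the measure).} Using Assumption \ref{assum-V}, build $\rho$ as the Feynman–Kac measure associated with $-\lap$ plus the $V$-perturbation: the base measure is the law of a stationary Ornstein–Uhlenbeck-type process (the Gaussian field with covariance $(-\lap + 1)^{-1}$, i.e. the "$e^{-M(u)}e^{\frac12\int \overline u\lap u}\,du$" Gaussian), which on $\R$ is the stationary two-sided process $x\mapsto u(x)$ with the Ornstein–Uhlenbeck law; the Gibbs density $e^{-\frac12\int V(|u|^2)}$ is then incorporated via a Feynman–Kac representation, and one checks it is a.s.\ finite and yields a genuine probability measure after normalization. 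From the explicit Gaussian covariance one reads off that the marginal of $u(x)$ is a fixed (Gaussian, hence Lebesgue-absolutely-continuous) law independent of $x$, and Kolmogorov's continuity criterion gives $s$-Hölder regularity for every $s<\tfrac12$ and a.s.\ $u\notin L^2(\R)$. The approximate measures $\rho_L$ are the analogous objects on $[-L,L]$, finite-dimensional after Fourier truncation, and $\rho_L\to\rho$ in an appropriate sense (e.g.\ weakly against cylinder test functions, using that the truncated covariances converge).

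\emph{Step 2 (approximate dynamics and invariance).} For each truncation parameter, the projected equation $\partial_t u_N = \Pi_N J(-\lap u_N + \Pi_N V'(|u_N|^2)u_N)$ is a finite-dimensional ODE; it conserves the truncated Hamiltonian and mass, and its flow preserves Lebesgue measure on the truncation space (here one uses Assumption \ref{assum-J} on $J$, in particular skew-symmetry, to get the divergence-free property). Hence $\rho_N$ is invariant: the law of $u_N(t)$ equals $\rho_N$ for all $t$. One then needs uniform-in-$N$ bounds on the laws of $u_N$ as elements of $\mathcal C([-T,T],\mathcal D')$ — coming from the invariance (the time marginal is always $\rho_N$, uniformly controlled) together with an equicontinuity-in-time estimate obtained by using the equation to bound $\|u_N(t)-u_N(t')\|$ in a negative-order space by $|t-t'|$ times a quantity that is $\rho_N$-integrable uniformly in $N$.

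\emph{Step 3 (limit and identification).} Tightness in $\mathcal C(\R,\mathcal D')$ follows from Step 2 (Arzelà–Ascoli in the relevant Fréchet topology plus tightness of each time slice); Prokhorov gives a weakly convergent subsequence of laws, Skorohod gives random variables $X_N\to X_\infty$ a.s.\ on a common $(\Omega,\mathcal A,P)$ with the right laws, so automatically $\mathrm{law}(X_\infty(t))=\rho$ for every $t$. The remaining work is to pass to the limit in the weak formulation: test \eqref{hamileq} against a smooth compactly supported space-time function, and show the nonlinear term $\int\!\!\int V'(|X_N|^2)X_N\,\overline{\partial_t\phi}$ converges to the corresponding expression in $X_\infty$ — this uses a.s.\ convergence in $\mathcal D'$ upgraded, via the uniform Hölder/Gaussian tail bounds from Step 1 (which transfer to $X_N$ since its marginal is $\rho_N$), to a.s.\ local uniform convergence strong enough to handle the nonlinearity under Assumption \ref{assum-V} on the growth of $V'$.

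\textbf{Main obstacle.} I expect the hard part to be \emph{Step 3's identification of the nonlinear term}, together with the companion difficulty of proving the limiting object is a solution rather than merely a process with the right marginals. Unlike the torus case, on $\R$ there is no a priori global control of the flow, so one cannot just quote well-posedness; one must squeeze a compactness/convergence argument for $V'(|X_N|^2)X_N$ out of the measure bounds alone (pointwise-in-$x$ Gaussian tails, local Hölder continuity), and ensure these bounds are genuinely uniform in the truncation parameter — which is delicate because truncation in Fourier space does not preserve pointwise positivity or the pointwise Gaussian structure. A secondary obstacle is making the Feynman–Kac construction of $\rho$ and the convergence $\rho_N\to\rho$ fully rigorous in infinite volume, in particular controlling the partition functions $\int e^{-\frac12\int_{-L}^{L} V(|u|^2)}\,d(\text{Gaussian})$ so that normalization behaves well as $L\to\infty$.
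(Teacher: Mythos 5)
Your overall architecture is the paper's: finite-dimensional truncations invariant by Liouville, tightness, Prokhorov--Skorohod, identification of the limit, with Feynman--Kac describing the measure. But there are two genuine gaps, and they sit exactly where the paper does its real work.

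First, your Step 1 constructs $\rho$ by ``incorporating the Gibbs density $e^{-\frac12\int V(|u|^2)}$'' into the Ornstein--Uhlenbeck law and normalizing, checking that the exponent is a.s.\ finite. On the line it is not: since the marginal law of $\xi(x)$ is stationary and $V\geq 0$ is nondegenerate, $\int_{\R}V(|\xi(x)|^2)\,dx=+\infty$ almost surely, so the would-be density vanishes identically and the normalization is $0/0$. The measure $\rho$ is \emph{not} absolutely continuous with respect to $\mu$; it must be built as the infinite-volume limit of the normalized finite-window measures $Z_R'^{-1}e^{-\int_{-R}^{R}V(|u|^2)}d\mu$, whose existence and nontriviality come from the ground-state transformation for $-\lap+|x|^2+V(|x|^2)$ (this is why the paper assumes a nondegenerate first eigenvalue). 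Relatedly, the window of the potential cutoff cannot be taken equal to the box size $L$ as your partition-function display $\int e^{-\frac12\int_{-L}^{L}V}d(\text{Gaussian})$ suggests: that partition function decays exponentially in $L$, and every error term in the approximation scheme carries a factor $Z^{-1}$. The paper instead chooses the cutoff radius $R(L)$ growing so slowly that $Z_{L,3}\geq L^{-1/6}$ (Assumption~\ref{assum-R}), and tunes $N(L)\geq L^4$ and $R'(L)-R(L)\leq C^{-1}L^{-1/2}$ so that the errors between $\rho_L,\rho_{L,1},\rho_{L,2},\rho_{L,3}$ are powers of $Z_{L,3}$ (Lemma~\ref{lem-0}). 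Without some such quantitative coupling of the parameters, the chain of comparisons collapses.

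Second, the uniform-in-truncation moment bounds that feed tightness cannot be obtained by ``transferring'' Gaussian bounds through the density, because the only naive bound is $d\rho_L/d\mu_L\leq Z_L^{-1}\to\infty$. The paper's substitute is Proposition~\ref{prop-prob2}: a Feynman--Kac representation of $\int|u(x)-u(y)|^r|x-y|^{-1-rs}\,d\rho_{L,3}$ combined with the maximum principle to compare the Schr\"odinger semigroup of $\hat T_V$ with that of $\hat T_0$, which yields bounds by the corresponding $\mu$-moments, uniformly in $L$ for $\max(|x|,|y|)\leq R(L)$, and by $L^{1/6}$ times those moments otherwise. You flag the nonlinear identification in Step 3 as the main obstacle; in fact that step goes through by fairly soft arguments once the uniform weighted $H^s$ moments are in hand, whereas what you relegate to a ``secondary obstacle'' --- the degeneration of the partition functions and the uniformity of the measure estimates --- is the core of the proof and is missing from your proposal.
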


\begin{remark} The properties of $X_\infty$ are consequences of properties of $\rho$ and ensure that, for any fixed $t$, $X_\infty$ is almost surely not in $L^2$. Indeed, as $X_\infty$ is H\"older continuous, if it is in $L^2$, then $X_\infty(t,x)$ converges towards $0$ when $x$ goes to $\infty$. And since the law of $X_\infty(t,x)$ does not depend on $x$, the probability that it converges towards $0$ at $\infty$ is less than the probability for $X_\infty(t,0)$ to be $0$ which is null since the law of $X_\infty(t,0)$ is absolutely continuous with respect to the Lebesgue measure.
 Notice that of course the solution defined by $X_\infty$ is (possibly) not unique, as for two configurations of the probability space $\omega_1\neq \omega_2 \in \Omega$ such that $X_\infty(t=0,\omega_1) = X_\infty(t=0,\omega_2)$ the solutions $X_\infty(t,\omega_1)$ and $X_\infty(t,\omega_2)$ might not be equal.
 \end{remark}

\begin{remark}
This result can be deduced for the Schr\"odinger equation from the paper by Bourgain, \cite{bouinf} who proves a stronger theorem in the case of a cubic non linearity, since he proves not only the existence of a weak flow but also its uniqueness. A strong invariance result can be deduced from it. The idea is to take the invariant measure on a box of size $L$ with periodic boundary condition and pass to the limit. On the other hand, our strategy allows us to obtain results for a wider class of equations that in fact include, after some additional manipulations, also some variable coefficients Schr\"odinger equations  (see Appendix). Note that the theorem also provides global existence of weak solution for a large class of semi-linear equations in dimension 1 for non localised data, including a large choice of dispersion relation (thanks to the freedom on $J$) and nonconvex nonlinearities.
\end{remark}

\begin{remark}\label{varcoeffrk}
Theorem \ref{teo1} and its proof can be in fact modified to deal with some variable coefficients equations as well. We will explain the necessary modifications to the proof in Appendix \ref{appvar}.
\end{remark}

The strategy of our proof is inspired by \cite{burqtzv}, in which the authors adapt to the context of dispersive PDEs a technology already developed in fluid mechanics that essentially relies on the application of Prokhorov's and Skorohod's Theorems. The idea is to construct a sequence of random variables which solve some approximating equations for which the existence of an invariant measure is standard to prove and then pass to the limit. This will produce the existence of a measure and a random variable as in Theorem \ref{teo1}. The main difficulty in the present context is due to the infinite volume setting, which makes the approximating procedure significantly less intuitive, together with the infinite speed of propagation. Nevertheless, we show that the only invariance we need is the one of a finite dimensional problem on a dilated torus, and is obtained just by the application of Liouville's Theorem for finite dimensional Hamiltonian flows. The rest is reduced to proving that the measure $\rho$ is the 
limit of the invariant measures for the finite dimensional problems along with some probabilistic estimates. The idea of the proof is the following.

We take $L >0$ and build the Gibbs measure for the ODE
\begin{equation}\label{fineq}
\partial_t u = \Pi_{N(L)} J_L \Pi_{N(L)} \grad_{\overline u} H_L(u)
\end{equation}
where $\Pi_{N(L)}$ projects onto the Fourier modes in $[-N(L),N(L)]$, and $J_L$ is a smooth periodisation of $J$. We set
$$
H_L(u) = -\frac12 \int_{2\pi L\T} \overline u \lap u + \frac12 \int_{2\pi L\T} \chi_LV(|u|^2)
$$
where $\chi_L$ is a smooth compactly supported function. we call $\psi_L$ the flow of \eqref{fineq}.

The Gibbs measure is given by
$$
d\rho_L(u) = Z^{-1} e^{-2H_L(u)+M(u)} d\mathcal{L}(u)
$$
where $\mathcal{L}$ is the Lebesgue measure and $Z$ is a normalization factor ($\rho$ is a probability measure). We point out that this expression is not formal, as indeed the measure is defined on a finite-dimensional space. This measure can also be written
$$
d\rho_L(u) = Z_L^{-1} e^{-\int \chi_L V(|u|^2)} d\mu_L(u)
$$
where $Z_L$ is a normalization factor and $\mu_L$ is the measure induced by the random variable
$$
\xi_L^f(x) = \sum_{k\in \Z\cap [-N(L)L,N(L)L]} \frac{e^{i kx/L}}{\sqrt{1+ \frac{k^2}{L^2}}} \Big( W\Big( \frac{k+1}{L}\Big) - W \Big( \frac{k}{L}\Big)\Big)
$$
and $W$ is a complex valued Brownian motion. It is a finite-dimensional Gaussian measure as the sum is finite. Letting $N$ go to $\infty$ independently from $L$, we get that this random variable converges in some sense to 
$$
\xi_L(x) = \sum_{k\in \Z} \frac{e^{i kx/L}}{\sqrt{1+ \frac{k^2}{L^2}}} \Big( W\Big( \frac{k+1}{L}\Big) - W \Big( \frac{k}{L}\Big)\Big),
$$
and if we let $L$ go to $\infty $ in $\xi_L$ we get that it converges towards
$$
\xi(x) =  \int \frac{e^{i kx}}{\sqrt{1+ k^2}} dW(k)
$$
which is a known object called the oscillatory or Ornstein-Uhlenbeck process, we refer to \cite{SIMfunint} or \cite{GlimJaf}. It induces a measure $\mu$ on functions.

Hence, if we take the limit only in the kinetic part of the Hamiltonian we get the measure
$$
d\rho_{L,2} (u) = Z_{L,2}^{-1} e^{- \int \chi_L V(|u|^2)} d\mu(u)
$$
where $Z_{L,2}$ is a normalization factor. If we let $\chi_L$ go to the function constant to $1$, we get thanks to Feynman-Kac's theory a non trivial measure $\rho$, which is described precisely in the book by Simon, \cite{SIMfunint} pages 58 and onward.

The idea is that by choosing $N(L)$ and $\chi_L$ appropriately then the sequence $\rho_L$ converges weakly towards $\rho$. This, together with a moment bound uniform in $L$, which is needed to guarantee that the limiting random variable is a weak solution of the equation, is heuristically sufficient to get the result.

Indeed, we then build $\nu_L$ which is the image measure $\rho_L$ under the flow $\psi_L(t)$ of \eqref{fineq}. That means that $\nu_L$ is the law of a random variable $X_L$ such that $X_L(t) = \psi_L(t) X_L(0)$ and such that the law of $X_L(0)$ is $\rho_L$. Thanks to the Prokhorov-Skorohod method, we can reduce the problem to proving that the family $(\nu_L)_L$ is tight in $\mathcal C (\R,H_\varphi)$, for some Banach space $H_\varphi$. The choice of $H_\varphi$ is not so important, it just has to be separable in order to apply the Prokhorov-Skorohod method.  The topology in time, though, has to be such that taking $X_\infty(t)=\displaystyle \lim_{L\rightarrow+\infty} X_L(t)$ makes sense, that is why we choose $\mathcal C(\R)$. This method has been used on dispersive equation in \cite{burqtzv,OhTho}, and comes from the fluid mechanics literature, see for example \cite{machin1,machin2}.

Using the invariance of $\rho_L$ under $\psi_L$, we then reduce the problem to proving estimates on $\rho_L$ and to proving that $\rho_L$ goes to $\rho$ (and not to something trivial). These results are consequences of Feynman-Kac's theory. 

The paper is organized as follows : in the next subsection, we give or recall definitions and notations, together with some preliminary probabilistic properties. We give the assumptions on $J,V, \chi_L,N(L)$, and others.

In Section 2, we explain the Prokhorov-Skorohod method and reduce our problem to proving estimates on $\rho_L$ and its convergence towards $\rho$.

In Section 3, we prove the estimates and the convergence relying on our choices for $\chi_L$ and $N(L)$.

\subsection{Assumptions and notations}

We write $\an x = \sqrt{1 + x^2}$ and $D = \sqrt{1 - \partial_x^2}$.

\paragraph{Assumptions on the equation}

\begin{assum}\label{assum-V} One chooses $V$ in $\mathcal C^2$ such that there exist $C$, $r_V$, such that for all $u\in \C$,
\begin{eqnarray}\label{assuV}
0\leq V(|u|^2) &\leq & C\an{u}^{r_V}, \\
|V'(|u|^2)| &\leq & C\an{u}^{r_V} ,\\
|V''(|u|^2) |& \leq & C\an{u}^{r_V}.
\end{eqnarray}

One also requires that the operator $-\lap + |x|^2 + V(|x|^2)$ has a non-degenerate first eigenvalue, which should often be the case, see \cite{ReeSim4}.
\end{assum}

One may choose $r_V > 1$.

\begin{assum}\label{assum-J} One chooses $J$ skew-symmetric such that there exist $\kappa \in \R^+$, $C\geq 0$, such that for all $s\in ]0,\frac12[$, $u \in L^2(\R)$,
$$
\|D^{s-\kappa+1/4} J (1-\lap) u\|_{L^2} \leq C \|u\|_{L^2}.
$$
and such that for all $\sigma \geq 0$, all $u \in H^{\sigma + \kappa}$
$$
\|D^\sigma J u \|_{L^2} \leq C \|u\|_{H^{\sigma + \kappa}}.
$$
We also assume that if $u$ is $\mathcal C^\infty$ with compact support, then $Ju$ also is and $Ju$ has the same support. We choose $\kappa$ big enough such that $s-\kappa \in 2\Z$.

We set for some test function $u$, $J_L u(x) = \eta_L(x) J(\eta_L u)(x) $ if $x\in [-L,L]$ and $J_Lu(x) = J_L u(x-\ent{x/L}L)$ otherwise where $\eta_L$ is a $\mathcal C^\infty$ function equal to $1$ on $[-L+1,L-1]$ and to $0$ outside $[L,L]$. This defines $J_L$ which inherits the properties on $J$, except the last one.

\end{assum}

We have in mind $J= \partial_x$, which corresponds to take mKdV, or $J = i$ which corresponds to NLS, but one may choose $J = \sum_{k\leq \kappa} a_k(x) \partial_x^k$ with $a_k$ $\mathcal C^\infty$ bounded functions whose derivatives are also bounded as long as $J$ remains skew-symmetric, and this corresponds to some variable coefficients PDEs (see the Appendix). Notice that our assumptions \ref{assum-V} allow to include any polynomial-type nonlinearity.

\paragraph{Notations on measures}

Let $W(k)$ be a centered complex Gaussian process defined on $\R$ with covariance
$$
\E(\overline{W(k)} W(l) ) = \delta_{kl \geq 0} \min(|k|,|l|)
$$
where $\delta_{kl \geq 0} = 1$ if $k$ and $l$ have the same sign and $\delta_{kl \geq 0} = 1$ otherwise. This yields that
$$
\E(\overline{dW(k)}dW(l)) = \delta(k-l), \textrm{ and } \E(|W(t) - W(s)|^2) = |t-s|.
$$
For further properties of Gaussian processes, we refer to \cite{simonPphi2}.

For all $L>0$, we write
\begin{equation}\label{defxiL}
\xi_L(x) = \sum_{k\in \Z} \frac{e^{i kx/L}}{\sqrt{1+ \frac{k^2}{L^2}}} \Big( W\Big( \frac{k+1}{L}\Big) - W \Big( \frac{k}{L}\Big)\Big)
\end{equation}
if the solution of the equation has values in $\C$ and
$$
\xi_L(x) = \Re \Big( \sum_{k\in \Z} \frac{e^{i kx/L}}{\sqrt{1+ \frac{k^2}{L^2}}} \Big( W\Big( \frac{k+1}{L}\Big) - W \Big( \frac{k}{L}\Big)\Big)\Big)
$$
if the solution of the equation has values in $\R$.

We write $\xi$ the limit when $L$ goes to $\infty$ of this random variable, that is
$$
\xi (x) = \int \frac{e^{i kx}}{\sqrt{1+ k^2}} dW(k)
$$
in the complex case and 
$$
\xi(x) = \Re \Big( \int \frac{e^{i kx}}{\sqrt{1+ k^2}} dW(k)\Big)
$$
in the real case.

We write $\xi_L^f$ the restriction to low frequencies of $\xi_L$, that is with $N(L)$ a function that goes to $\infty$ when $L$ goes to $\infty$,
$$
\xi_L^f(x) = \Pi_{N(L)} \xi_L(x) = \sum_{k\in \Z\cap [-N(L)L,N(L)L]} \frac{e^{i kx/L}}{\sqrt{1+ \frac{k^2}{L^2}}} \Big( W\Big( \frac{k+1}{L}\Big) - W \Big( \frac{k}{L}\Big)\Big)
$$
in the complex case and we take its real part in the real case.

We write $\mu_L$ the measure induced by $\xi_L^f$, $\mu$ the measure induced by $\xi$, and $\mu_{L,1}$ the one induced by $\xi_L$.

With $R(L)$ a function that goes to $\infty$ when $L$ goes to $\infty$, we write 
\begin{equation}\label{ZL3}
Z_{L,3} = \int e^{-\int_{-R(L)}^R(L) V(|u(x)|^2)dx} d\mu(u)
\end{equation}
and 
$$
d\rho_{L,3} (u) = \frac{ e^{-\int_{-R(L)}^R(L) V(|u(x)|^2)dx}}{Z_{L,3}} d\mu(u).
$$

We also write
\begin{equation}\label{ZL}
Z_L = \int\Big( e^{-\int \chi_L(x) V(|u(x)|^2)dx} \Big)d\mu_L(u),
\end{equation}
and 
$$
d\rho_L (u) =  \frac{e^{-\int \chi_L(x) V(|u(x)|^2)dx}}{Z_L} d\mu_L.
$$

Moreover, we write
\begin{equation}\label{ZL1}
Z_{L,1} = \int\Big( e^{-\int \chi_L(x) V(|u(x)|^2)dx} \Big)d\mu_{L,1}(u),
\end{equation}
$$
d\rho_{L,1}(u) = \frac{e^{-\int \chi_L(x) V(|u(x)|^2)dx}}{Z_{L,1}} d\mu_{L,1}(u).
$$
and
\begin{equation}\label{ZL2}
Z_{L,2} = \int e^{-\int \chi_L(x) V(|u(x)|^2)dx} d\mu(u),
\end{equation}
$$
d\rho_{L,2}(u) = \frac{e^{-\int \chi_L(x) V(|u(x)|^2)dx}}{Z_{L,2}} d\mu(u).
$$

We recall that $\rho$ is the limit when $R$ goes to $\infty$ of 
$$
\frac{ e^{-\int_{-R}^{R} V(|u(x)|^2)dx}}{Z'_{R}} d\mu(u)
$$
where $Z'_R$ is a normalization factor. It exists, is non-trivial, is carried by $s$-H\"older continuous maps for $s<\frac12$ and the law of $u(x)$ induced by $\rho$ is independent from $x$ and absolutely continuous with regard to the Lebesgue measure, see \cite{SIMfunint} pp 58 and onward.

Hence $\rho$ is also the limit of $\rho_{L,3}$ when $L$ goes to $\infty$.

We sum up the notations on measures in the following table

\vspace{0.5cm}
\begin{tabular}{|c|c|c|c|c|}\hline
 & random variable & linear measure & Z & final measure \\\hline
Finite dimension & $\xi_L^f$ & $\mu_L$ & $Z_L$ & $\rho_L$ \\\hline
Including high frequencies & $\xi_L$ & $\mu_{L,1}$ & $Z_{L,1}$ & $\rho_{L,1}$ \\\hline
L goes to $\infty$ & $\xi$ & $\mu $ & $Z_{L,2}$ & $\rho_{L,2}$ \\
in the kinetic energy& & & & \\\hline
$\chi_L \leftarrow 1_{[-R(L),R(L)]}$ & $\xi$ & $\mu$ & $Z_{L,3}$ & $\rho_{L,3}$ \\\hline
$R(L)\rightarrow \infty$ & $\xi$ & $\mu$ &  & $\rho$ \\\hline
\end{tabular}
\vspace{0.5cm}

Finally, we write $\nu_L$ the image measure of $\rho_L$ under the flow $\psi_L$, that is
$$
\nu_L(A) = \rho_L \{ u | \, t\mapsto \psi_L(t) u \in A \}.
$$

\paragraph{Norms.}

We introduce the functional setting that we will be using in the sequel. Let $S_s$ be the space induced by the norm

\begin{equation}\label{defSs}
\|f\|_{S_s}^2 = \|\an{t}^{-1} \varphi D^{s-\kappa} f\|_{L^2(\R^2)}^2 + \|\an{t}^{-1}\varphi D^{s-\kappa} \partial_t f\|_{L^2(\R^2)}^2,
\end{equation}
let $H_\varphi$ be the space induced by the norm
\begin{equation}\label{defHphi}
\|f\|_\varphi = \|\an{x}^{-1} \varphi D^{-\kappa} f\|_{L^2(\R)}
\end{equation}
and $S$ be the space $S = \mathcal C(\R,H_\varphi)$ normed by
\begin{equation}\label{defS}
\|f\|_S^2 = \sup_{t\in \R}\an{t}^{-3} \|f\|^2_\varphi.
\end{equation}

The map $\varphi(|x|)$ is a smooth decreasing on $\R^+$ positive map that we specify later.

\paragraph{Assumptions on $\chi_L$, $N(L)$, $s$.}

\begin{assum}\label{assum-R}Let $R(L)$ be such that for $L\geq 1$,
$$
Z_{L,3} \geq L^{-1/6}.
$$
\end{assum}

This is possible because 
$$
\int e^{-\int_{-R}^{R} V(|u(x)|^2)dx} d\mu(u)
$$
is positive for all $R\geq 0$ and is equal to $1$ if $R=0$ so that the case $L=1$ is included.

\begin{assum}\label{assum-R'} Let $R'(L) = R(L) + \frac1{C\sqrt L}$ where $C$ is a (big) positive constant.\end{assum}

\begin{assum}\label{assum-chi} We assume that $\chi_L$ is a $\mathcal C^\infty$ function such that $\chi_L(x) = 1$ on $[-R(L),R(L)]$, and $\chi_L(x) = 0$ outside $[-R'(L),R'(L)]$ and $\chi_L(x) \in [0,1]$.\end{assum}

Under these assumptions, $\chi_L$ converges to $1$ in $\an x L^\infty$.
\begin{assum}\label{assum-N} Let $N(L) \geq L^4$ and assume $N(L) \geq  L^{1/(3 - 6s)}$ where $s$ is taken according to Assumption \ref{assum-sob}. Assume also that
$$
N(L)^{-1/4} L \max_{R(y) \leq L} y^{1/6}
$$
is uniformly bounded in $L$.\end{assum}

\begin{assum} \label{assum-sob} We take $s< \frac12$ such that the embedding $H^s \hookrightarrow L^p$ holds for $p = 2r_V+2, 2r_V$ and $2r_V+4$.\end{assum}

\paragraph{Invariance.}

\begin{proposition} We have that $\rho_L$ is strongly invariant under the flow $\psi_L(t)$ of 
$$
\partial_t u = - \Pi_{N(L)}J_L \Pi_{N(L)}\lap u + \Pi_{N(L)}J_L \Pi_{N(L)}\chi_L V'(|u|^2) u 
$$
in $H^s(\T_L)$, for all $s<\frac12$. The map $\Pi_{N(L)}$ is the projection onto the Fourier modes in $[-N(L),N(L)]$. In other words, the equation is globally well-posed on a set of full $\rho_L$ measure and for all measurable sets $A$ of $H^s(\T_L)$ and all times $t$ we have 
$$
\rho_L(\psi_L(t)^{-1}(A)) = \rho_L(A).
$$
\end{proposition}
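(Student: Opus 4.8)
\emph{Proof strategy.} The plan is to read \eqref{fineq} as a finite-dimensional Hamiltonian ODE and to combine Liouville's theorem with the fact that $\rho_L$ is, by construction, the exponential of minus a conserved quantity times Lebesgue measure. Since $\Pi_{N(L)}$ keeps only finitely many Fourier modes, the flow lives on the finite-dimensional space $E_L:=\Pi_{N(L)}L^2(\T_L)$, a space of trigonometric polynomials on which all Sobolev norms are equivalent; in particular the choice of $s<\tfrac12$ plays no role here and ``measurable subset of $H^s(\T_L)$'' just means Borel subset of $E_L$. Note also that $\rho_L$ is a genuine probability measure because $0\le e^{-\int \chi_L(x) V(|\xi_L^f(x)|^2)\,dx}\le 1$ and $\mu_L$ is a probability measure, so $Z_L\in(0,1]$.

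First I would prove global well-posedness on $E_L$. The vector field $F_L(u)=\Pi_{N(L)}J_L\Pi_{N(L)}\bigl(-\lap u+\chi_L V'(|u|^2)u\bigr)$ is $\mathcal C^1$ on $E_L$: its linear part is a bounded operator on the finite-dimensional $E_L$, and its nonlinear part is $\mathcal C^1$ since $\chi_L$ is smooth and $V\in\mathcal C^2$ by Assumption \ref{assum-V}; moreover $E_L$ is preserved, the projection being applied on the outside. Cauchy--Lipschitz thus gives a unique maximal solution for every datum in $E_L$. To exclude finite-time blow-up I would invoke the two conserved quantities $H_L(u)=\tfrac12\|\partial_x u\|_{L^2(\T_L)}^2+\tfrac12\int\chi_L V(|u|^2)$ and $M(u)=\tfrac12\|u\|_{L^2(\T_L)}^2$: the first because $\mathbb J_L:=\Pi_{N(L)}J_L\Pi_{N(L)}$ is skew-symmetric (it inherits this from $J$, $\Pi_{N(L)}$ being a self-adjoint projection), so that $\tfrac{d}{dt}H_L=\langle \nabla H_L,\mathbb J_L\nabla H_L\rangle=0$, and the second by the same computation as for \eqref{hamileq}. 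Since $V\ge 0$, both are non-negative, hence $\|u(t)\|_{H^1(\T_L)}^2=2M(u(t))+\|\partial_x u(t)\|^2\le 2M(u_0)+2H_L(u_0)$ along the flow; by equivalence of norms on $E_L$ the orbit stays in a fixed ball, so the flow $\psi_L(t)$ is global on all of $E_L$, in particular on a set of full $\rho_L$-measure.

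Next I would record the Lebesgue structure of $\rho_L$. By construction one has $d\rho_L(u)=Z^{-1}e^{-\Phi_L(u)}\,d\mathcal L(u)$ with $\Phi_L$ the relevant linear combination of $H_L$ and $M$ obtained by absorbing the factor $e^{-\int\chi_L V(|u|^2)}$ into the kinetic/mass Gaussian weight of $\mu_L$ (this is the identity $d\rho_L=Z^{-1}e^{-2H_L+M}d\mathcal L$ stated above; one checks that the normalizations chosen for $\xi_L^f$ in Section~1.2, namely the weights $(1+k^2/L^2)^{-1}$, are exactly those making the Gaussian weight of $\mu_L$ equal, up to constants, to the quadratic part of $\Phi_L$, and that $\Phi_L$ is coercive so $Z<\infty$). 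The key point is that $\Phi_L$, being a linear combination of the conserved $H_L$ and $M$, is itself conserved along $\psi_L$. On the other hand $\psi_L$ is the Hamiltonian flow $\dot u=\mathbb J_L\nabla H_L(u)$ with $\mathbb J_L$ a \emph{constant} skew-symmetric matrix and $\nabla H_L(u)=-\lap u+\chi_L V'(|u|^2)u$ the (real) gradient of the real functional $H_L$; hence the divergence of $F_L=\mathbb J_L\nabla H_L$ is $\mathrm{tr}\bigl(\mathbb J_L\,D^2H_L(u)\bigr)=0$, the trace of a skew matrix times a symmetric one. By Liouville's theorem, $\mathcal L$ is invariant under $\psi_L(t)$ for every $t\in\R$.

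Finally I would combine the two facts. For Borel $A\subset E_L$ and $t\in\R$, substituting $v=\psi_L(t)u$ (so $u=\psi_L(-t)v$) in
$$
\rho_L\bigl(\psi_L(t)^{-1}(A)\bigr)=Z^{-1}\int \mathbf 1_A(\psi_L(t)u)\,e^{-\Phi_L(u)}\,d\mathcal L(u),
$$
and using that $\mathcal L$ is $\psi_L(-t)$-invariant together with $\Phi_L(\psi_L(-t)v)=\Phi_L(v)$, the right-hand side becomes $Z^{-1}\int \mathbf 1_A(v)e^{-\Phi_L(v)}\,d\mathcal L(v)=\rho_L(A)$, which is the claim. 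The only step that is not essentially automatic is the a priori bound giving global well-posedness, i.e. exploiting $V\ge 0$ and the conservation of $H_L$ and $M$ to prevent the truncated ODE from exploding; the remainder is the textbook Liouville argument in finite dimensions, the two bookkeeping points being that $\Pi_{N(L)}J_L\Pi_{N(L)}$ is skew-symmetric and that the normalizations defining $\xi_L^f$ turn the Gaussian weight of $\mu_L$ into the quadratic part of $\Phi_L$.
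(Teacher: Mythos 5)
Your proposal is correct and follows exactly the route the paper intends: the paper's entire justification is the one-line remark that finite dimensionality makes Liouville's theorem applicable and that $H_L$ (and the mass) are conserved, which is precisely the argument you spell out in detail (skew-symmetry of $\Pi_{N(L)}J_L\Pi_{N(L)}$, global existence from coercivity of the conserved quantities since $V\geq 0$, divergence-free vector field, change of variables). Your write-up simply supplies the details the paper omits.
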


This is due to the fact that we are in finite dimension, thus Liouville's Theorem applies, and $H_L(u)$ is invariant under $\psi_L(t)$. 

\begin{remark}
The equation above admits local solutions because under our assumptions on $V$ and especially that it is $\mathcal C^2$ it is an ODE with Lipschitz right hand side, hence Cauchy-Lipschitz Theorem applies. It is globally well-posed because $M(u)$ is conserved and thus we have a control on any norm of the solution.
\end{remark}

\paragraph{Oscillatory processes and Feynman-Kac.}
A key part of our argument exploits the property that $\mu$ is a complex or real valued oscillatory process process, also known as Ornstein-Uhlenbeck process. Therefore, for the sake of completeness, we devote this paragraph to recall some classical facts about them that will be fundamental in the sequel. First of all, we recall the following definition.
\begin{definition}
A family $\{q(x)\}_{x\in\R}$ of Gaussian random variables is called {\em oscillatory process} if 
$$
\mathbb{E}(q(x)q(y))=\frac12 e^{-|x-y|}.
$$
We will denote with $dq$ the measure on paths $\omega(x)$ associated to the oscillatory process.
\end{definition}
Its law is invariant under translations in $x$. We recall the following important fact: in analogy to what happens with Brownian motions, it is natural (and useful) to link oscillatory processes with suitable semi-groups.
In the following proposition we collect some basic facts about oscillatory processes that will be needed in the sequel.
\begin{proposition}\label{use?}
Let $q(x)$ be an oscillatory process, $L_0=-\frac12\frac{d^2}{dx^2}+\frac12 x^2-\frac12$ and $\Omega_0(x)=\pi^{-1/4}e^{-(1/2)x^2}$ so that $L_0\Omega_0=0$ and $\int |\Omega_0|^2=1$. Let moreover $f_0,\dots,f_n\in L^\infty(\R)$ and let $-\infty<y_0<\dots<y_n<+\infty$. Then
$$
\E(f_0(q(y_0)),\dots, f_n(q(y_n))=(\Omega, M_{f_0}e^{-x_1L_0}M_{f_1}\dots e^{-x_nL_0}M_{f_n}\Omega_0)_{L^2}
$$
where $x_i=y_i-y_{i-1}>0$, $(\cdot,\cdot)_{L^2}$ denotes the standard $L^2$ scalar product and $M_f$ the multiplication operator $M_fg(x)=f(x)g(x)$.
\end{proposition}
\begin{proof}
See \cite{SIMfunint} Theorem 4.7 pag. 37.
\end{proof}
It is also possible to give an analogous result in a slightly more general setting, i.e. to relate the semigroup $e^{-xL}$ with $L=L_0+V$ for some suitable potential $V$ to path integrals. Results of this kind have been widely investigated in literature, especially in the case of Brownian motion, and are usually referred to as {\em Feynman-Kac} formulas. In what follows $V$ will be such that $E(V)=\inf {\rm spec} (L_0+V)$ is a simple eigenvalue with an associated strictly positive eigenvector $\Omega_V$; polynomials bounded from below satisfy this property.
\begin{definition}
We define the $P(\phi)_1$-process the stochastic process with joint distribution $q(x_1),\dots q(x_n)$, $(x_1<\dots<x_n)$:
$$
\Omega_V(u_1)\Omega_V(u_n) e^{-y_1\hat L}(u_1,u_2)\dots e^{-y_{n-1}}\hat{L}(u_{n-1},u_n).
$$ and $y_i=x_{i+1}-x_i$
where $e^{-y\hat{L}}(a,b)$ is the integral kernel of $e^{-s\hat{L}}$. We will denote with $d\rho_V$ the corresponding measure.
\end{definition}
We get the following result:
\begin{theorem}\label{feykac}[Feynman-Kac]
For any smooth and bounded test function $G: C(\R,\C)\rightarrow \C$ we have
$$
\int G(u) d\rho_{3,L}=\displaystyle \lim_{R\rightarrow+\infty} C_R^{-1}\int G(u) e^{\int_{-R}^RV(|u(x)|^2)}d\mu(u)
$$
where $C_N$ is the $L^1(d\mu)$ norm of $e^{-\int_{-R}^R V(|u(x)|^2}d\mu(u)$.
\end{theorem}
\begin{proof}
See\cite{SIMfunint} Theorems 6.7 and 6.9 pag 58.  Notice that, by mimicking the proof of Theorem
6.1 there, it is possible to deal also with the complex case.
\end{proof}

We can also describe $\rho_{L,3}$ thanks to Feynman-Kac formulas by the following theorem.

\begin{theorem}\label{th-describrhol3}[\cite{SIMfunint}, Theorem 6.7] We have that the restriction to the $\sigma$-algebra generated by $\sigma_{a,b}:=\{q(x)\}_{a\leq x\leq b}$ of $\mu_V$ is absolutely continuous with regard to $\mu$ restricted to the same $\sigma$-algebra with 
$$
\frac{d\mu_V|_{\sigma_{a,b}}}{d\mu|_{\sigma_{a,b}}} (u) = \Omega_V(u(b))\Omega_V(u(a))\Omega_0^{-1}(u(b)) \Omega_0^{-1}(u(a)) e^{E(V)(b-a)}e^{-\int_{a}^b V(|u|^2(x)) dx}.
$$
\end{theorem}

\paragraph{Some probabilistic estimates.}
\begin{proposition}\label{prop-prob1} We have that for all $p\geq 2$, and $s< \frac12$, there exists $C$ such that for all $x\in \R$
$$
\|D^s(\xi - \xi_L)(x)\|_{L^p_{\textrm{proba}}} \leq CL^{-1} \an x.
$$
\end{proposition}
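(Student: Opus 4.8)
The plan is to compute explicitly the $L^p_{\textrm{proba}}$ norm of $D^s(\xi - \xi_L)(x)$ by exploiting the Gaussianity of the increments of $W$. First I would write both $\xi$ and $\xi_L$ as (stochastic) series/integrals against $dW$, so that $\xi(x) - \xi_L(x)$ is itself a centered (complex or real) Gaussian variable for each fixed $x$; the same is true after applying the Fourier multiplier $D^s = \langle \cdot\rangle^s$, since $D^s$ acts diagonally on the exponentials $e^{ikx/L}$ (giving a factor $\langle k/L\rangle^s$) and on the integral representation of $\xi$ (giving $\langle k\rangle^s$). By the standard equivalence of Gaussian moments (Khinchin / hypercontractivity), for a centered Gaussian $G$ one has $\|G\|_{L^p_{\textrm{proba}}} \le C_p \|G\|_{L^2_{\textrm{proba}}}$ for every $p \ge 2$, so it suffices to bound the second moment $\mathbb{E}\big(|D^s(\xi - \xi_L)(x)|^2\big)$ and show it is $\lesssim L^{-2}\langle x\rangle^2$.

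Next I would reduce the second-moment computation to a deterministic estimate. Writing $\xi_L(x) = \sum_{k\in\Z} \frac{e^{ikx/L}}{\sqrt{1+k^2/L^2}}\big(W(\tfrac{k+1}{L}) - W(\tfrac{k}{L})\big)$ and using $W(\tfrac{k+1}{L}) - W(\tfrac{k}{L}) = \int_{k/L}^{(k+1)/L} dW(\lambda)$, one can express $D^s\xi_L(x)$ as a single stochastic integral $\int h_L(x,\lambda)\, dW(\lambda)$ where, on the interval $[\tfrac{k}{L},\tfrac{k+1}{L}]$, $h_L(x,\lambda) = \langle k/L\rangle^s \frac{e^{ikx/L}}{\sqrt{1+k^2/L^2}}$ is the piecewise-constant approximation of the kernel $h(x,\lambda) = \langle\lambda\rangle^s \frac{e^{i\lambda x}}{\sqrt{1+\lambda^2}} = \langle\lambda\rangle^{s-1} e^{i\lambda x}$ appearing in $D^s\xi(x) = \int h(x,\lambda)\, dW(\lambda)$. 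Then the Itô isometry $\mathbb{E}(|\int (h - h_L)\, dW|^2) = \int |h(x,\lambda) - h_L(x,\lambda)|^2\, d\lambda$ converts everything into estimating the $L^2_\lambda$-distance between $h$ and its step-function approximation at scale $1/L$. On each cell $[\tfrac{k}{L},\tfrac{k+1}{L}]$ the variation of $h(x,\cdot)$ is controlled by $\frac{1}{L}\sup |\partial_\lambda h(x,\lambda)|$; differentiating $\langle\lambda\rangle^{s-1}e^{i\lambda x}$ produces one term with a factor $|x|$ (from differentiating the exponential) times $\langle\lambda\rangle^{s-1}$, and a term $\langle\lambda\rangle^{s-2}$. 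Since $s < \tfrac12 < 1$, $\langle\lambda\rangle^{s-1}$ is square-integrable in $\lambda$, and the $|x|$ term gives exactly the $\langle x\rangle$ factor; squaring and summing the per-cell errors, i.e.\ integrating $|\partial_\lambda h|^2$ against $\tfrac1{L^2}$, yields $\mathbb{E}(|D^s(\xi-\xi_L)(x)|^2) \le C L^{-2}\langle x\rangle^2$, as desired. (One has to be mildly careful near $\lambda = 0$, where $\langle\lambda\rangle^{s-2}$ is bounded anyway, and the cell containing $0$ is handled directly; in the real-valued case one takes real parts, which only improves constants.)

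The main obstacle I expect is the bookkeeping of the endpoint/low-frequency contributions and making the "replace $h$ by a step function of mesh $1/L$" argument fully rigorous, rather than any conceptual difficulty: one must check that the approximation error of the Riemann-type sum is genuinely $O(L^{-1})$ in $L^2_\lambda$ uniformly, paying attention to the fact that the coefficient $\langle k/L\rangle^{-1}$ and the phase $e^{ikx/L}$ are frozen per cell while the true kernel varies, and that the $x$-dependence of the phase is what forces the linear-in-$\langle x\rangle$ loss (and no better). A clean way to organize this is to split $D^s(\xi - \xi_L) = \sum_k \int_{k/L}^{(k+1)/L}\big(h(x,\lambda) - h_L(x,\lambda)\big)dW(\lambda)$, bound each summand's variance by $\int_{k/L}^{(k+1)/L}|h(x,\lambda) - h(x,k/L)|^2 d\lambda + \tfrac1L|h(x,k/L)|^2|\langle k/L\rangle^s(1 - \text{correction})|^2$ — but in fact since $h_L$ on the cell is literally $h(x,k/L)$ up to the discrepancy between $\sqrt{1+k^2/L^2}$-type normalizations which are exact here, only the first term survives — and then use the mean value theorem on each cell. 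Summing a convergent series in $k$ (convergence guaranteed by $2(1-s) > 1$) closes the estimate. Finally, the passage from $p = 2$ to general $p \ge 2$ is immediate by Gaussian hypercontractivity, completing the proof of the proposition.
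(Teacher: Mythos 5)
Your proposal is correct and follows essentially the same route as the paper: reduce to the second moment by Gaussianity, realize $D^s(\xi-\xi_L)(x)$ as a single Wiener integral whose kernel is the difference between $\langle\lambda\rangle^{s-1}e^{i\lambda x}$ and its piecewise-constant approximation at mesh $1/L$, bound that difference pointwise by $\tfrac{\langle x\rangle}{L}$ times a square-integrable profile via the mean value theorem, and conclude by the It\^o isometry using $s<\tfrac12$. The only difference is that you spell out the per-cell mean-value-theorem bookkeeping that the paper leaves implicit.
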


\begin{remark}\label{rkproba}
Notice that the space $L^p_{\textrm{proba}}$ is short for the $L^p$ space of the probabilistic space where the Gaussian process $W$ is defined.
\end{remark}

This is due to the fact that $D^s(\xi - \xi_L)(x)$ is a Gaussian variable hence 
$$
\|D^s(\xi - \xi_L)(x)\|_{L^p_{\textrm{proba}}} \lesssim \|D^s(\xi - \xi_L)(x)\|_{L^2_{\textrm{proba}}} .
$$

What is more,
$$
D^s(\xi - \xi_L)(x) = \int \Big( \frac{e^{ikx}}{(1+k^2)^{1/2-s}} - \frac{e^{i\ent{k}_L x}}{(1+\ent{k}_L^2)^{1/2-s}}\Big) dW(k)
$$
where $\ent{k}_L = L^{-1} \ent{kL} =\frac1L \max \{n\in\Z:n\leq kL \}$. Since 
$$
\Big| \frac{e^{ikx}}{(1+k^2)^{1/2-s}} - \frac{e^{i\ent{k}_L x}}{(1+\ent{k}_L^2)^{1/2-s}}\Big| \lesssim \frac{\an x}{L} (1+k^2)^{s-1/2}
$$
we get the result.

\begin{proposition}\label{prop-prob2} From Feynman-Kac's theory, we have that for all $r\geq 2, s< \frac12$, such that $r(\frac12 - s) \geq \frac72$ (that is for further reference $r \geq r_s := \frac7{1-2s}$), there exists $\varphi_r$ $\varphi_{r,s}$ such that for all $x,y\in \R$, $L\geq 1$,
$$
\int |u(x)|^r d\rho_{L,3} (u) \leq \varphi_{r}(|x|)
$$
and
$$
\int \frac{|u(x)-u(y)|^r}{|x-y|^{1+rs}} d\rho_{L,3}(u) \leq \varphi_{r,s}(\max(|x|,|y|)).
$$
The map $\varphi_r$ is given by
$$
\varphi_r(x) = C_r \max_{R(L) \leq |x|} L^{1/6}
$$
where $C_r$ is a constant depending on $r$.
\end{proposition}

\begin{proof} 

For the second inequality, we assume that $|x-y|\leq 1$ and we use the description of the measure and the results seen in the previous paragraph. We consider the operator $T_V$ defined as 
$$T_Vf(u) = -\lap f(u)+ (|u|^2 + V(|u|^2) -\frac12)f(u),$$ and let $\Omega_V$ be the eigenstate associated to the non-degenerate first eigenvalue $E(V)$ of $T_V$. We also define the operator $\hat{T}_V=T_V-E(V)$.

Let $x,y \in \R$ and let $R(L) \geq \max (|x|,|y|)$. We assume, without loss of generality, $x\geq y$. 
We now rely on Theorem \ref{th-describrhol3} on the map $u\mapsto G(u)$ defined as:
$$
G(u) = \frac{|u(x)-u(y)|^r}{|x-y|^{1+s r}}\Omega_0(u(-R(L))) \Omega_V^{-1}(u(-R(L)))\Omega_0(u(R(L))) \Omega_V^{-1}(u(R(L)))e^{-2E(V) R(L)}Z_{L,3}^{-1}.
$$
We get on one hand that
$$
\int G(u) \Omega_V(u(-R(L))) \Omega_0^{-1}(u(-R(L)))\Omega_V(u(R(L))) \Omega_0^{-1}(u(R(L)))e^{2E(V) R(L)} Z_{L,3} d\rho_{L,3}(u)
$$
is equal to 
$$
\int \frac{|u(x)-u(y)|^r}{|x-y|^{s r+1}} d\rho_{L,3}(u),
 $$
and on the other hand that is equal to 
\begin{multline*}
\int \frac{|u_x-u_y|^r}{|x-y|^{s r+1}} \Omega_0(u_{-R(L)}) \Omega_V^{-1}(u_{-R(L)})\Omega_0(u_R(L)) \Omega_V^{-1}(u_R(L))e^{-2E(V) R(L)}Z_{L,3}^{-1} \Omega_V(u_R(L)) \Omega_V(u_{-R(L)})\\
e^{-(y+R(L))\hat T_V} (u_{-R(L)},u_y) e^{-(x-y)\hat T_V} (u_y,u_x) e^{-(R(L)-x)\hat T_V} (u_x,u_R(L)) du_{-R(L)}du_ydu_x du_R(L).
\end{multline*}
By simplifying the $\Omega_V$ we get
\begin{multline*}
\int \frac{|u(x)-u(y)|^r}{|x-y|^{s r +1}} d\rho_{L,3}(u)= \\
 e^{-2R(L)E(V)}Z_{L,3}^{-1}\int  \tilde G
e^{-(y+R(L))\hat T_V} (u_{-R(L)},u_y) e^{-(x-y)\hat T_V} (u_y,u_x) e^{-(R(L)-x)\hat T_V} (u_x,u_R(L)) du_{-R(L)}du_ydu_x du_R(L)
\end{multline*}
with
$$
\tilde G(u_{-R(L)},u_y,u_x,u_R(L)) =
\frac{|u_x-u_y|^r}{|x-y|^{s r +1}} \Omega_0(u_{-R(L)}) \Omega_0(u_R(L))  .
$$
By applying Theorem \ref{th-describrhol3} on the map $u\mapsto \Omega_0(u_{-R(L)}) \Omega_V^{-1}(u_{-R(L)})\Omega_0(u_R(L)) \Omega_V^{-1}(u_R(L))e^{-2E(V) R(L)}$, we get that
$$
Z_L = e^{-2E(V) R(L)}\int \Omega_0(u_1) \Omega_0(u_2) e^{-2R(L)\hat T_V}(u_1,u_2) du_1du_2 = e^{-2E(V) R(L)}\an{\Omega_0,e^{-2R(L)\hat T_V}\Omega_0}_{L^2}.
$$
Indeed, 
$$
e^{-2R(L)\hat T_V}\Omega_0 (u) = \int dv e^{-2R(L)\hat T_V}(v,u) \Omega_0(v).
$$
By decomposing $\Omega_0$ on $\R \Omega_V$ and its orthogonal, we get
$$
Z_L \geq e^{-2E(V) R(L)} \an{\Omega_0,\Omega_V}^2.
$$
Because $\Omega_0$ and $\Omega_V$ are both positive maps, we get that $c^{-1}:=\an{\Omega_0,\Omega_V}^2>0$.

Using the maximum principle, we get
\begin{multline*}
\int \frac{|u(x)-u(y)|^r}{|x-y|^{s r +1}} d\rho_{L,3}(u)\leq c
\int \frac{|u_x-u_y|^r}{|x-y|^{s r +1}} \Omega_0(u_{-R(L)}) \Omega_0(u_R(L))   e^{-(y+R(L))\hat T_V} (u_{-R(L)},u_y) \\
e^{-(x-y)(\hat T_0 - E(V)) } (u_y,u_x) e^{-(R(L)-x)\hat T_V} (u_x,u_R(L)) du_{-R(L)}du_ydu_x du_R(L).
\end{multline*}
Using Mehler's formula (see \cite{SIMfunint} pag. 38), we get that for $r(\frac12 -s) \geq \frac72$,  there exists $C_{r,s}$
$$
e^{-(x-y)\hat T_0} (u_y,u_x)\frac{|u_x-u_y|^r}{|x-y|^{s r +1}} \leq C_{r,s} \frac1{1+ |u_x|^3+ |u_y|^3}.
$$
Indeed, applying Mehler's formula to our case, we need to bound the following term
$$
\displaystyle
\left|\frac{u_x-u_y}{|x-y|^{1/2}}\right|^re^{-\frac{|u_x-u_y|^2}{|x-y|}} |x-y|^{r(\frac12-s)-2}e^{-\frac{u_x^2+u_y^2}2|x-y|}:
$$
the first term $\left|\frac{u_x-u_y}{|x-y|^{1/2}}\right|^re^{-\frac{|u_x-u_y|^2}{|x-y|}} $ is bounded, while for the second one we have, as $r(\frac12-s)-2\geq \frac32$, $$|x-y|^{r(\frac12-s)-2}e^{-\frac{u_x^2+u_y^2}2|x-y|}\lesssim \frac1{1+ |u_x|^3+ |u_y|^3}.$$
Therefore 
\begin{multline*}
\int \frac{|u(x)-u(y)|^r}{|x-y|^{s r +1}} d\rho_{L,3}(u)\leq c C_{r,s}
\int  \Omega_0(u_{-R(L)}) \Omega_0(u_R(L))   e^{-(y+R(L))\hat T_V} (u_{-R(L)},u_y) \\
e^{(x-y) E(V) } \frac1{\an{u_x}^{3/2}} \frac1{\an{u_y}^{3/2}} e^{-(R(L)-x)\hat T_V} (u_x,u_R(L)) du_{-R(L)}du_ydu_x du_R(L).
\end{multline*}

Integrating over $u_{-R(L)}$ and $u_x$ yields 
$$
\int \frac{|u(x)-u(y)|^r}{|x-y|^{s r +1}}d\rho_{L,3}(u)\leq  \an{\Omega_0,  e^{-(R(L)-x)\hat T_V} \frac1{\an{u_x}^{3/2}} } \an{e^{-(y+R(L))\hat T_V}\Omega_0,\frac1{\an{u_y}^{3/2}}}e^{(x-y) E(V)}.
$$
Since $e^{-(y+R(L))\hat T_V}$ and $e^{-(R(L)-x)\hat T_V}$ are less than the identity and $u\mapsto \frac1{\an{u}^{3/2}}$ and $\Omega_0$ belong to $L^2$ (because $u$ is complex, we are in $\R$ dimension 2), we get that
$$
\int \frac{|u(x)-u(y)|^r}{|x-y|^{s r +1}}d\rho_{L,3}(u)\leq C'_{r,s}.
$$

For $R(L)\leq \max(|x|,|y|)$, we have
$$
\int \frac{|u(x)-u(y)|^r}{|x-y|^{s r +1}} d\rho_{L,3}(u) \leq \frac1{Z_{L,3}} \int \frac{|u(x)-u(y)|^r}{|x-y|^{s r +1}}d\mu(u)
$$
where we have thanks to Assumption \eqref{assum-R}, $\frac1{Z_{L,3}} \leq L^{1/6}$. Thus, with
$$
\varphi_{r} (|x|) = C_{r,s}\max_{R(L)\leq |x|} C L^{1/6},
$$
we get the second inequality for $|x-y| \leq 1$.

We have that $\varphi_{r,} $ is a non negative, increasing function.

The first estimate was proved in \cite{bouinf} in the case of $ u \mapsto V(|u|^2)$ convex but can be proved in the same way as the second in the general case. From Theorem \ref{th-describrhol3}, we get for $x \in [-R(L),R(L)]$,
\begin{multline*}
\int |u(x)|^r d\rho_{L,3}(u) =\\ \frac{e^{-2R(L)E(V)}}{Z_L} \int du_{R}du_{-R} du_x \Omega_0(u_R) \Omega_0(u_{-R}) e^{-\hat T_V(R(L) -x)}(u_x,u_R) e^{-\hat T_V (x+R(L))}(u_{-R},u_x)|u_x|^r.
\end{multline*}
If $x+R(L) \geq 1$, we decompose the integral as
\begin{multline*}
\int |u(x)|^r d\rho_{L,3}(u) =\\ \frac{e^{-2R(L)E(V)}}{Z_L} \int du_{R}du_{-R} du_x du_y \Omega_0(u_R) \Omega_0(u_{-R}) e^{-\hat T_V(R(L) -x)}(u_x,u_R) e^{-\hat T_V }(u_y,u_x)e^{-\hat T_V (y+R(L))}(u_{-R},u_y)|u_x|^r
\end{multline*}
where $y = x-1 \in [-R(L),R(L)]$. Otherwise we decompose it as 
\begin{multline*}
\int |u(x)|^r d\rho_{L,3}(u) =\\ \frac{e^{-2R(L)E(V)}}{Z_L} \int du_{R}du_{-R} du_x du_y \Omega_0(u_R) \Omega_0(u_{-R}) e^{-\hat T_V(R(L) -y)}(u_y,u_R) e^{-\hat T_V }(u_x,u_y)e^{-\hat T_V (x+R(L))}(u_{-R},u_x)|u_x|^r
\end{multline*}
where $y=x+1 \in [-R(L),R(L)]$ taking $L$ large enough to have $R(L)\geq 1$. We focus on the first case. Thanks to Mehler's formula and the maximum principle, we have 
$$
e^{-\hat T_V }(u_y,u_x) |u_x|^r \leq C_r e^{- (|u_x|^2 + |u_y|^2)/4} e^{E(V)}.
$$
Therefore, we have 
\begin{multline*}
\int |u(x)|^r d\rho_{L,3}(u)  \leq \\ c C_r e^{E(V)}\int du_{R}du_{-R} du_x du_y \Omega_0(u_R) \Omega_0(u_{-R}) e^{-\hat T_V(R(L) -x)}(u_x,u_R) e^{-\hat T_V (y+R(L))}(u_{-R},u_y)e^{-(|u_x|^2+ |u_y|^2)/4}.
\end{multline*}
We integrate over $u_{-R}$ and $u_x$ to get
\begin{multline*}
\int |u(x)|^r d\rho_{L,3}(u)  \leq \\ c C_r e^{E(V)}\int du_{R}  du_y \Omega_0(u_R)  e^{-\hat T_V(R(L) -x)}(e^{-|z|^2/4})(u_R) e^{-\hat T_V (y+R(L))}(\Omega_0)(u_y)e^{- |u_y|^2/4},
\end{multline*}
that is
$$
\int |u(x)|^r d\rho_{L,3}(u) \leq c C_r e^{E(V)} \an{\Omega_0,e^{-\hat T_V(R(L) -x)}(e^{-|z|^2/4})} \an{e^{-\hat T_V (y+R(L))}(\Omega_0), e^{-|z|^2/4}},
$$
from which we deduce since $\hat T_V \geq 0$ and $\Omega_0$ and $e^{-|z|^2/4}$ are in $L^2$
$$
\int |u(x)|^r d\rho_{L,3}(u) \leq C'_r
$$
where $C'_r$ does not depend on $x$ or $L$. We deduce the case $x\notin [-R(L),R(L)]$ as for the second inequality. 

For $|x-y|\geq 1$ in the second inequality, we estimate $\big| \frac{u(x)-u(y)}{x-y}\big|^r$ by $|u(x)|^r+|u(y)|^r$ and we use the first inequality.

%
%

\end{proof}

\section{The Prokhorov-Skorohod method and the reduction to rough estimates and convergence}

\subsection{The Prokhorov-Skorohod method}

We start by recalling Prokhorov's and Skorohod's Theorems.

\begin{theorem}[Prokhorov]\label{th-prok} Let $(\nu_L)_L$ be a family of probability measures defined on the topological $\sigma$ algebra of a separable complete metric space $S$. Assume that $(\nu_L)_L$ is tight, that is, for all $\varepsilon > 0$, there exists a compact $K_\varepsilon$ of $S$ such that for all $L$, we have 
$$
\nu_L(K_\varepsilon) \geq 1-\varepsilon.
$$
Then there exists a sequence $L_n$ such that $\nu_{L_n}$ converges weakly. That is, there exists a probability measure on $S$, $\nu$ such that for all functions $F$ bounded and Lipschitz continuous on $S$, we have 
$$
\E_{\nu_{L_n}}(F) \rightarrow \E_{\nu}(F).
$$
\end{theorem}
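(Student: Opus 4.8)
This is the non-trivial implication of Prokhorov's theorem, used here as a black box, so I only sketch the classical argument. The plan is in three moves: (i) compactify $S$ and push the measures $\nu_L$ forward onto the compactification; (ii) extract a weakly convergent subsequence there by a soft functional-analytic argument; (iii) use the tightness hypothesis to show the limit is actually concentrated on $S$ and that the convergence descends to genuine weak convergence of measures on $S$.

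First I would embed $S$, which is separable and metrizable, homeomorphically into the Hilbert cube $H = [0,1]^{\N}$ via a map $\iota$, and set $\tilde\nu_L = \nu_L \circ \iota^{-1}$, which are Borel probability measures on the compact metric space $H$ (push-forward under a continuous map). Since $H$ is compact metric, $\mathcal C(H)$ is a separable Banach space; by the Riesz representation theorem the probability measures on $H$ sit inside the unit ball of $\mathcal C(H)^{*}$, which is weak-$*$ sequentially compact (Banach--Alaoglu, plus metrizability of the weak-$*$ topology on bounded subsets of the dual of a separable space), and the probability measures form a weak-$*$ closed subset of it. Hence there is a subsequence along which $\tilde\nu_{L_n}$ converges weakly to a probability measure $\tilde\nu$ on $H$, i.e. $\int_H G\, d\tilde\nu_{L_n} \to \int_H G\, d\tilde\nu$ for every $G \in \mathcal C(H)$.

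Next I would bring tightness into play. For each $k$, applying the tightness hypothesis with $\varepsilon = 1/k$ gives a compact $K_k \subseteq S$ with $\nu_L(K_k) \ge 1 - 1/k$ for all $L$; then $\iota(K_k)$ is compact, hence closed, in $H$, so the portmanteau inequality yields $\tilde\nu(\iota(K_k)) \ge \limsup_n \tilde\nu_{L_n}(\iota(K_k)) = \limsup_n \nu_{L_n}(K_k) \ge 1 - 1/k$. Letting $k \to \infty$, $\tilde\nu$ is concentrated on the Borel set $\bigcup_k \iota(K_k) \subseteq \iota(S)$, and since $\iota$ restricted to $\bigcup_k K_k$ is a Borel isomorphism onto $\bigcup_k \iota(K_k)$, the formula $\nu := \tilde\nu \circ \iota$ defines a Borel probability measure on $S$, carried by the $\sigma$-compact set $\bigcup_k K_k$. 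To conclude, fix $F$ bounded and Lipschitz on $S$ and $\varepsilon > 0$, choose $k$ with $1/k < \varepsilon$, and use Tietze's theorem (legitimate since $\iota(K_k)$ is closed in the normal space $H$) to extend $F \circ \iota^{-1}$ from $\iota(K_k)$ to some $G \in \mathcal C(H)$ with $\|G\|_\infty \le \|F\|_\infty$. Since $F$ and $G \circ \iota$ agree on $K_k$ and both integrands are bounded by $\|F\|_\infty$ off $K_k$, whose complement has mass $< 1/k$ under every $\nu_{L_n}$ and under $\nu$, one gets $|\E_\lambda(F) - \int_H G\, d(\lambda \circ \iota^{-1})| \le 2\|F\|_\infty / k$ for $\lambda = \nu_{L_n}$ and $\lambda = \nu$; combined with $\int_H G\, d\tilde\nu_{L_n} \to \int_H G\, d\tilde\nu$ this gives $\limsup_n |\E_{\nu_{L_n}}(F) - \E_\nu(F)| \le 4\|F\|_\infty \varepsilon$, and letting $\varepsilon \to 0$ finishes the proof.

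The main obstacle is this last step: a bounded continuous function on $S$ need not extend continuously to the closure of $\iota(S)$ in $H$, so the weak-$*$ convergence obtained on the compactification does not by itself give weak convergence of measures on $S$; it is precisely the tightness hypothesis, exploited through truncation to the compact pieces $K_k$, that bridges the gap. (Completeness of $S$ is not actually needed for this implication — one works directly with the $\sigma$-compact set $\bigcup_k K_k$, which is always Borel in $H$ — it is genuinely required only for the converse direction of Prokhorov's theorem.)
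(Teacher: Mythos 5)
The paper does not prove this statement at all: Prokhorov's theorem is quoted as a black box with a reference to \cite{prok}, so there is no in-paper argument to compare against. Your sketch is the standard compactification proof and is essentially correct: embedding the separable metrizable $S$ into the Hilbert cube, extracting a weak-$*$ limit $\tilde\nu$ via Banach--Alaoglu and separability of $\mathcal C(H)$, using the portmanteau inequality on the closed sets $\iota(K_k)$ to see that $\tilde\nu$ charges the $\sigma$-compact set $\bigcup_k\iota(K_k)\subseteq\iota(S)$, and then transferring weak convergence back to $S$ by Tietze extension plus the uniform mass bound off $K_k$. Two minor points worth tightening if this were to be written out in full: (i) as stated the conclusion is vacuous unless one insists $L_n\to\infty$, so one should first fix a countable subfamily $L_n\to\infty$ and extract from it; (ii) the definition $\nu:=\tilde\nu\circ\iota$ needs the observation (which you make, but only in passing) that for Borel $A\subseteq S$ the set $\iota(A)\cap\bigcup_k\iota(K_k)$ is Borel in $H$ because $\iota$ restricted to each compact $K_k$ is a homeomorphism onto its image, so that one really sets $\nu(A)=\tilde\nu\bigl(\iota(A)\cap\bigcup_k\iota(K_k)\bigr)$. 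Your closing remark that completeness of $S$ is only needed for the converse implication is also correct, and is consistent with how the theorem is used in Corollary \ref{cor-PSmeth}.
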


We refer to \cite{prok}, page 114.

\begin{theorem}[Skorohod]\label{th-sko} Let $\nu_n$ be sequence of probability measures defined on the topological $\sigma$ algebra of a separable complete metric space $S$. Assume that $(\nu_n)_n$ converges weakly towards a probability measure $\nu$. Then there exists a subsequence $\nu_{n_k}$ of $(\nu_n)_n$, a probability space $(\Omega, \mathcal A, P)$, a sequence of random variable on this space $(X_k)_k$ and a random variable $X_\infty$ on this space such that \begin{itemize}
\item for all $k$, the law of $X_k$ is $\nu_{n_k}$, that is for all measurable set $A$ of $S$, $\nu_{n_k}(A) = P(X_k^{-1}(A))$,
\item the law of $X_\infty$ is $\nu$,
\item the sequence $X_k$ converges almost surely in $S$ towards $X_\infty$.
\end{itemize}
\end{theorem}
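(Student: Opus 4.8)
The statement is the classical Skorohod representation theorem; I would prove it on the canonical probability space $([0,1],\mathcal B([0,1]),\mathrm{Leb})$, so that $(\Omega,\mathcal A,P)$ may be taken to be this space (the completeness of $S$ is not actually needed). When $S=\R$ the construction is transparent: with $U$ uniform on $[0,1]$ and $F_n,F$ the distribution functions of $\nu_n,\nu$, the quantile variables $X_n=F_n^{\leftarrow}(U)$ and $X_\infty=F^{\leftarrow}(U)$ have the prescribed laws, and $F_n^{\leftarrow}\to F^{\leftarrow}$ at every continuity point of $F^{\leftarrow}$, hence $\mathrm{Leb}$-a.e.\ — no subsequence is needed here. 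For a general separable metric $S$ the plan is to imitate this using conditional quantiles along an exhausting sequence of fine Borel partitions.

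The only input from ``$\nu_n\to\nu$ weakly'' that is used is the portmanteau property: if $B\subseteq S$ is Borel with $\nu(\partial B)=0$ then $\nu_n(B)\to\nu(B)$; this follows from the assumed convergence $\E_{\nu_n}(F)\to\E_\nu(F)$ for bounded Lipschitz $F$ by sandwiching $\mathbf 1_B$ between Lipschitz functions that are $1$ on $B^{\circ}$, resp.\ on a neighbourhood of $\overline B$, and $0$ off a slightly larger set, and then shrinking the neighbourhoods. Using separability, for every $m\ge 1$ I build a countable Borel partition $\{B_{m,i}\}_{i\ge 1}$ of $S$ with $\operatorname{diam}B_{m,i}<2^{-m}$ and $\nu(\partial B_{m,i})=0$: cover $S$ by countably many open balls of radius $<2^{-m-1}$, picking each radius outside the at most countable set of values for which the corresponding sphere has positive $\nu$-mass, then disjointify (each ball minus the union of the earlier ones), noting that each resulting boundary lies in a finite union of spheres and is therefore $\nu$-null; replacing the level-$(m+1)$ partition by its common refinement with the level-$m$ one, I may assume the partitions are nested. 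Since $\nu$ is a probability measure, for each $m$ there is a finite index set $I_m$ with $\nu(\bigcup_{i\in I_m}B_{m,i})>1-2^{-m}$, and by portmanteau there is $n(m)$ with $|\nu_n(B_{m,i})-\nu(B_{m,i})|<2^{-m}/|I_m|$ for all $i\in I_m$ once $n\ge n(m)$. Choosing $n_1<n_2<\cdots$ with $n_k\ge n(k)$ and relabelling, I assume this holds with $m=k$.

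The heart of the matter is the coupling, built by induction on the partition level and mimicking the quantile construction cell by cell. At level $1$, split $[0,1]$ into consecutive intervals, one of length $\nu(B_{1,i})$ for each $i$ — this fixes the level-$1$ cell of $h:=X_\infty$ — and inside the interval attached to $B_{1,i}$ reserve a sub-interval of length $\min(\nu(B_{1,i}),\nu_{n_k}(B_{1,i}))$ on which $h_{n_k}:=X_k$ is forced into the \emph{same} cell $B_{1,i}$, the residual mass (of total size $O(2^{-1})$ by the previous paragraph, hence small on the relabelled subsequence) being routed elsewhere so that $h_{n_k}$ still has level-$1$ cell law $\nu_{n_k}$. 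Iterating inside each cell, using the nesting of the partitions and the analogous convergence of the conditional cell probabilities, one obtains measurable maps $h_{n_k},h:[0,1]\to S$ with $(h_{n_k})_*\mathrm{Leb}=\nu_{n_k}$, $h_*\mathrm{Leb}=\nu$, and, for each fixed $m$, $\mathrm{Leb}\{\,h_{n_k}\text{ and }h\text{ lie in different level-}m\text{ cells}\,\}\le 2^{-m+1}$ once $k\ge m$. Setting $X_k:=h_{n_k}$ and $X_\infty:=h$, the first two bullet points are exactly these pushforward identities; for the third, being in the same level-$m$ cell forces $d(X_k,X_\infty)<2^{-m}$, and $\sum_k\mathrm{Leb}\{\text{different level-}k\text{ cell}\}<\infty$, so Borel--Cantelli gives $d(X_k,X_\infty)\to 0$ almost surely.

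The step I expect to be the main obstacle is making the inductive coupling precise while simultaneously keeping the exact marginals $\nu_{n_k}$ and $\nu$, the compatibility across the nested partitions, and the control of the mass on which the two chosen cells disagree: at each level the leftover mass outside $I_m$ must be redistributed without spoiling the matching already achieved at coarser levels, and it is precisely this bookkeeping — together with the passage to a subsequence, which the statement permits — that makes the argument close.
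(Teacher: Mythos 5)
The paper does not prove this statement at all: Theorem \ref{th-sko} is the classical Skorohod representation theorem and is simply quoted from \cite{skoro}. Your proposal therefore does strictly more than the paper, and the route you take is the standard textbook proof (essentially Billingsley's, in Dudley's separable-metric form): portmanteau for $\nu$-continuity sets deduced from convergence against bounded Lipschitz functions, nested countable Borel partitions of $S$ into cells of diameter $<2^{-m}$ with $\nu$-null boundaries, and an interval-matching coupling on $([0,1],\mathrm{Leb})$. Each of these ingredients is correct as you state it, including the remark that completeness of $S$ is not needed: what completeness would otherwise provide is replaced by the observation that a decreasing chain of cells with empty intersection has $\nu$-masses tending to $0$ and hence contributes a Lebesgue-null set of $\omega$'s, so for a.e.\ $\omega$ the nested cells assigned to $\omega$ have a nonempty intersection of diameter $0$; this defines $X_\infty(\omega)$, and its law agrees with $\nu$ on the cells, which generate the Borel $\sigma$-algebra.

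The only step you do not close is the one you flag, the inductive coupling, and here you are making it harder than it needs to be. Only $X_\infty$ requires the full nested structure; for each fixed $k$ you may couple $X_k$ to $X_\infty$ using the level-$k$ partition alone, with no compatibility demanded between the couplings at different levels. Concretely, on a sub-interval of $h^{-1}(B_{k,i})$ of length $\min\bigl(\nu(B_{k,i}),\nu_{n_k}(B_{k,i})\bigr)$ you send $X_k$ into $B_{k,i}$ via any measurable map of $[0,1]$ with law $\nu_{n_k}(\cdot\mid B_{k,i})$ (such a map exists by the same nested-partition construction applied to $\nu_{n_k}(\cdot\mid B_{k,i})$, with no null-boundary requirement, so there is no circularity), and you route the residual mass so that the marginal of $X_k$ is exactly $\nu_{n_k}$. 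This yields $\mathrm{Leb}\{X_k,X_\infty\text{ in different level-}k\text{ cells}\}\lesssim 2^{-k}$, which is summable, and Borel--Cantelli finishes the argument as you indicate. With this simplification your sketch becomes the complete classical proof; the subsequence permitted by the statement gives you even more room than the theorem actually requires.
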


We refer to \cite{skoro}, page 79.

From the combination of these two Theorems, we get the following

\begin{corollary}\label{cor-PSmeth}
Let $S$ and $S_s$ be as in \eqref{defSs}-\eqref{defS}, and let $(\nu_L)_L$ be a family of probability measures defined on the topological $\sigma$ algebra of $S$.
For all $R\geq 0$, let $B_R$ be the closed ball of $S_s$ of center $0$ and radius $R$. Assume that
\begin{itemize}
\item for all $R\geq 0$, the ball $B_R$ is compact in $S$,
\item there exists $C\geq 0$ such that for all $L$, we have 
$$
\int \|u\|_{S_s}^2 d\nu_L(u)\leq C.
$$
\end{itemize}
Then, there exists a sequence $L_n$, a probability space $(\Omega, \mathcal A, P)$, a sequence of random variable on this space $(X_n)_n$ and a random variable $X_\infty$ on this space such that \begin{itemize}
\item for all $n$, the law of $X_n$ is $\nu_{L_n}$, 
\item the sequence $X_n$ converges almost surely in $S$ towards $X_\infty$.
\end{itemize}
\end{corollary}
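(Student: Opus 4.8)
The plan is to derive tightness of the family $(\nu_L)_L$ from the uniform second-moment bound, invoke Prokhorov's theorem to extract a weakly convergent subsequence, and then apply Skorohod's theorem to realise that subsequence as almost surely convergent random variables on a common probability space. First I would note that, since each ball $B_R$ of $S_s$ is compact in $S$, it is in particular closed in $S$, so the map $u\mapsto \|u\|_{S_s}$ (set to $+\infty$ off $S_s$) is lower semicontinuous, hence Borel measurable, on $S$; thus $\int \|u\|_{S_s}^2\, d\nu_L(u)$ is well defined and the hypothesis makes sense. Then, given $\varepsilon>0$, I set $R_\varepsilon = \sqrt{C/\varepsilon}$ and apply Markov's inequality:
$$
\nu_L\big(S\setminus B_{R_\varepsilon}\big) = \nu_L\big(\{\|u\|_{S_s} > R_\varepsilon\}\big) \le \frac{1}{R_\varepsilon^2}\int \|u\|_{S_s}^2\, d\nu_L(u) \le \frac{C}{R_\varepsilon^2} = \varepsilon ,
$$
uniformly in $L$. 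Since $K_\varepsilon := B_{R_\varepsilon}$ is compact in $S$, this is exactly the tightness hypothesis of Theorem \ref{th-prok}.

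Applying Theorem \ref{th-prok} to the tight family $(\nu_L)_L$ on the separable complete metric space $S$ then yields a sequence $L_n$ and a probability measure $\nu$ on $S$ with $\nu_{L_n}\to\nu$ weakly. I would then feed this weakly convergent sequence into Theorem \ref{th-sko}, which produces a subsequence --- which, after relabelling both $(L_n)_n$ and $(\nu_{L_n})_n$, I may take to be the whole sequence --- a probability space $(\Omega,\mathcal A,P)$, random variables $X_n$ of law $\nu_{L_n}$ and a random variable $X_\infty$ of law $\nu$, such that $X_n\to X_\infty$ $P$-almost surely in $S$. This is precisely the conclusion of the corollary.

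I do not expect a genuine analytic obstacle in this corollary itself: the two quoted theorems do the heavy lifting, and the only points needing care are (i) the $S$-measurability of $\|\cdot\|_{S_s}$, which follows from the closedness in $S$ of the balls $B_R$, and (ii) keeping track of the two successive subsequence extractions so that the final statement is indexed by a single sequence $L_n$. The real difficulty of the paper lies not here but in later verifying the hypotheses for the concrete measures $\nu_L$ --- namely the compact-embedding property that each $B_R\subset S_s$ is compact in $S$ (which will dictate the choice of the weight $\varphi$ and of the spaces $S_s$, $H_\varphi$, $S$), together with the uniform bound $\int \|u\|_{S_s}^2\, d\nu_L(u)\le C$, for which the invariance of $\rho_L$ under $\psi_L$, the probabilistic estimates on $\rho_L$, and the Feynman-Kac inputs of Propositions \ref{prop-prob1} and \ref{prop-prob2} will be used.
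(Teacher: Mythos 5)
Your proposal is correct and follows essentially the same route as the paper: Markov's inequality gives tightness from the uniform second-moment bound, then Prokhorov's and Skorohod's theorems are applied in succession. The extra care you take with the measurability of $\|\cdot\|_{S_s}$ and the relabelling of the two subsequence extractions is sound but not needed beyond what the paper's short argument already records.
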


\begin{proof} The proof uses Markov's inequality : 
$$
\nu_L( \|u\|_{S_s} > R) \leq R^{-2} C
$$
therefore
$$
\nu_L ( B_{R_\varepsilon}) \geq 1- \varepsilon
$$
for $C R_\varepsilon^{-2} \leq \varepsilon$. And $B_{R_\varepsilon}$ is compact in $S$. Then, one can apply Prokhorov's theorem and then Skorohod's theorem to conclude.
\end{proof}

We justify our choice for $S_s$. From now on, $S_s$ and $S$ are the spaces defined in the first section \eqref{defSs}, \eqref{defS}.

\begin{proposition}\label{compball} Let $B_R$ be the ball of $S_s$ of center $0$ and radius $R$. For all $R\geq 0$, $B_R$ is compact in $S$. \end{proposition}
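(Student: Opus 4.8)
The plan is to prove compactness of $B_R$ in $S = \mathcal C(\R, H_\varphi)$ by a diagonal/Arzelà--Ascoli argument combined with a Rellich-type compact embedding in the spatial variable, exploiting the weights $\an t^{-1}$ and $\an x^{-1}$ and the gain between $D^{s-\kappa}$ (controlled in $S_s$) and $D^{-\kappa}$ (used in $H_\varphi$). First I would fix a bounded interval $[-T,T]$ in time and a bounded interval in space, and observe that on such a box the $S_s$-bound gives a uniform bound in $H^s_{\mathrm{loc}}$ for $f(t,\cdot)$ and in $H^s_{\mathrm{loc}}$ for $\partial_t f(t,\cdot)$ in an $L^2_t$-averaged sense; the spatial weight $\varphi$ and $\an x^{-1}$ can be absorbed by localizing to the box where they are bounded above and below. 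The compact embedding $H^s(I) \hookrightarrow H^{0}(I) = L^2(I)$ on a bounded interval $I$ (Rellich), transferred through $D^{-\kappa}$, is what upgrades the bounded $D^{s-\kappa}$-norm to precompactness in the $D^{-\kappa}$-weighted norm defining $H_\varphi$ locally; the decay of $\varphi$ (to be chosen) handles the spatial tails so that local precompactness becomes precompactness in $H_\varphi$ itself.

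Next I would handle the time variable. From the definition of $\|f\|_{S_s}$, the quantity $\an t^{-1}\varphi D^{s-\kappa}\partial_t f$ is in $L^2(\R^2)$, so for a.e. pair $t_1 < t_2$ in a bounded interval we get, by Cauchy--Schwarz in time, a modulus-of-continuity estimate
$$
\|f(t_2,\cdot) - f(t_1,\cdot)\|_{H_\varphi} \lesssim |t_2 - t_1|^{1/2}\, \sup_{[-T,T]}\an t \,\|f\|_{S_s},
$$
after again trading $D^{s-\kappa}$ for $D^{-\kappa}$ (a bounded operation since $s > 0$). This gives uniform Hölder-$1/2$ equicontinuity in $t$ of the family $\{f : \|f\|_{S_s}\le R\}$ viewed as maps $[-T,T]\to H_\varphi$, and together with the pointwise-in-$t$ precompactness in $H_\varphi$ from the previous step, Arzelà--Ascoli yields precompactness in $\mathcal C([-T,T], H_\varphi)$. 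A diagonal extraction over $T = 1, 2, 3, \dots$ then gives precompactness in $\mathcal C(\R, H_\varphi)$ with its topology of uniform convergence on compact time intervals; since $S$ is metrized compatibly with that topology (the $\an t^{-3}$ weight in $\|\cdot\|_S$ only makes convergence easier), this is precompactness in $S$. Closedness of $B_R$ in $S$ follows because $S_s$-bounds pass to limits (weak lower semicontinuity of the $L^2$-norms, using that $S$-convergence implies convergence in $\mathcal D'(\R^2)$), so $B_R$ is compact.

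The main obstacle I anticipate is bookkeeping the three different weights simultaneously — $\an t^{-1}$ vs.\ $\an t^{-3}$ in time, $\varphi$ and $\an x^{-1}$ in space, and the derivative loss/gain between $D^{s-\kappa}$ and $D^{-\kappa}$ — and in particular making the spatial-tail argument rigorous: one must show that the contribution of $|x| > M$ to $\|f\|_{H_\varphi}$ is uniformly small over $B_R$, which forces a quantitative relation between the decay of $\varphi$ and the growth allowed by $S_s$ (this is presumably why $\varphi$ is left to be specified later). Concretely, $\|f(t,\cdot)\|_{H_\varphi}$ restricted to $|x|>M$ must be bounded by $\varphi(M)$ times something like $\sup_t \an t^{-1}\|\varphi D^{s-\kappa} f\|_{L^2_{t,x}}$ via a Sobolev/commutator estimate moving $\an x^{-1}D^{-\kappa}$ past the cutoff, and one needs $\varphi$ decaying fast enough (faster than any fixed polynomial, or at least fast enough to beat the $\an x$ loss inherent in $H_\varphi$'s definition and in commutators) that this tail is controlled uniformly. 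Once $\varphi$ is chosen with, say, faster-than-polynomial decay, the argument closes; the rest is the routine Rellich plus Arzelà--Ascoli plus diagonal extraction described above.
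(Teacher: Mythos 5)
Your argument is correct in substance but takes a genuinely different route from the paper. You prove sequential compactness: Rellich on bounded space--time boxes plus the weight gain for the spatial tails gives pointwise-in-$t$ precompactness in $H_\varphi$, the $\partial_t$ term in $S_s$ gives H\"older-$\frac12$ equicontinuity in $t$, and Arzel\`a--Ascoli with a diagonal extraction over $T=1,2,\dots$ finishes; you also address closedness of $B_R$ separately. The paper instead proves total boundedness directly: given $\varepsilon$, it truncates $f$ successively in time support (error $\lesssim \an{T}^{-1/2}R$ via the Sobolev embedding in $t$), time frequency (error $\lesssim C(T)F^{-1/4}R$), space support (error $\lesssim C(T,F)X^{-1}R$), and space frequency (error $\lesssim C(T,F,X)N^{-s}R$), landing in a finite-dimensional space of trigonometric polynomials covered by finitely many $\varepsilon/5$-balls. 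The two proofs exploit exactly the same four gains (time decay from $\an{t}^{-1}$ versus $\an{t}^{-3}$, time regularity from $\partial_t f$, spatial decay from the extra $\an{x}^{-1}$ in $\|\cdot\|_\varphi$, spatial regularity from $D^{s-\kappa}$ versus $D^{-\kappa}$); the paper's version is more quantitative and avoids subsequence extraction, yours is more modular and handles closedness, which the paper leaves implicit. Two points to tighten in your write-up: (i) the $S_s$ norm only controls $\|\varphi D^{s-\kappa}f(t,\cdot)\|_{L^2_x}$ in an $L^2_t$-averaged sense, so before applying Rellich at a \emph{fixed} $t$ you must upgrade to a pointwise-in-$t$ bound on $[-T,T]$ (fundamental theorem of calculus using the $\partial_t f$ term and a well-chosen base point $t_0$) --- the ingredient is present in your equicontinuity step but should be stated; (ii) your worry that $\varphi$ must decay faster than polynomially is unfounded: the spatial-tail gain comes from the extra factor $\an{x}^{-1}$ in the definition of $\|\cdot\|_\varphi$ relative to the weight $\varphi$ appearing in $S_s$, which yields a uniform $M^{-1}$ bound on the contribution of $|x|>M$ --- this is precisely the paper's third truncation step, and it is the reason $\varphi$ can be left essentially free at this stage. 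Finally, when passing from uniform convergence on compact time intervals to convergence in $\|\cdot\|_S$, one should record that $\an{t}^{-3/2}\|f(t)\|_\varphi\lesssim \an{t}^{-1/2}R$ uniformly over $B_R$ (again the Sobolev embedding in time), so the time tails are uniformly negligible.
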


\begin{proof} The proof is classical so we keep it short. Let $\eta $ be a $\mathcal C^\infty(\R_+)$ function with compact support. Assume that $\eta$ is such that $\eta(r) = 1$ if $r\leq 1$, $\eta(r) = 0$ if $r\geq 2$.

Let $f \in B_R$ and let $\varepsilon > 0$.

Let $f^T = \eta(|t|/T) f$. We have thanks to Sobolev's inequality on the time norm,
$$
\|f-f^T\|_S \leq C \an{T}^{-1/2} \|f\|_{S_s}
$$
where $C$ is a universal constant. Thus,
$$
\|f-f^T\|_S \leq C \an{T}^{-1/2}R.
$$
We choose $T$ such that $ C \an{T}^{-1/2}R \leq \frac\varepsilon{5}$.

Let $f^{T,F} = \eta\Big( \frac{1-\partial_t^2}{F^2}\Big) f^T$. We have, thanks to Sobolev's inequality on the time norm
$$
\|f^{T,F}-f^T\|_{S} \leq C(T)\|(1-\partial_t^2)^{3/8}(f^{T,F}-f^T) \|_{L^2(\R,H_\varphi)} 
$$
and thus
$$
\|f^{T,F}-f^T\|_{S} \leq C(T)F^{-1/4}\|(1-\partial_t^2)^{1/2}f^T \|_{L^2(\R,H_\varphi)} \leq C(T) F^{-1/4} R
$$
where $C(T)$ is a constant depending only on $T$. We choose $F$ such that $ C(T) F^{-1/4} R\leq  \frac\varepsilon{5}$.

Let $f^{T,F,X}$ be $\eta \Big( \frac{|x|}{X}\Big)f^{T,F}$. We have 
$$
\|f^{T,F,X}-f^{T,F}\|_{S} \leq C(T,F)X^{-1}\|f^{T,F} \|_{S_s} \leq C(T,F) X^{-1} R
$$
where $C(T,F)$ is a constant depending only on $T$ and $F$. We choose $X$ such that $ C(T,F) X^{-1} R\leq  \frac\varepsilon{5}$.

Let $f^{T,F,X,N} = \eta\Big( \frac{1-\partial_x^2}{N^2}\Big)f^{T,F,X}$, we have 
$$
\|f^{T,F,X,N}-f^{T,F,X}\|_{S} \leq C(T,F,X)N^{-s}\|f^{T,F,X} \|_{S_s} \leq C(T,F,X) N^{-s} R
$$
where $C(T,F,X)$ is a constant depending only on $T,X$ and $F$. We choose $N$ such that 
$$
C(T,F,X) N^{-s} R\leq  \frac\varepsilon{5}.
$$

Finally, we have that 
$$
\|f^{T,F,X,N}\|_S \leq C(T,F,X,N) R
$$
where $C(T,F,X,N)$ is a constant depending only on $T,F,X,N$. 

What is more, $ f^{T,F,X,N}$ as a function on $[-2T,2T]\times [-2X,2X]$, belongs to
$$
\textrm{Vect }\Big( \Big\{ (t,x)\mapsto e^{i(\omega t + kx)} \Big| \omega \in \frac\pi{2T}\Z \cap [-F,F] \, ,\, k\in \frac{\pi}{2X}\Z \cap [-N,N]\Big\}\Big)
$$
which is of finite dimension.

Hence, there exists a finite family of function $f_1,\hdots, f_{N_\varepsilon}$ of $S$ such that for all $f \in B_R$,
$$
f^{T,F,X,N} \in \bigcup_{k=1}^{N_\varepsilon} B^S\Big(f_k, \frac{\varepsilon}{5}\Big)
$$
where $B^S(f_k, \frac{\varepsilon}{5})$ is the open ball of $S$ of center $f_k$ and radius $\frac{\varepsilon}{5}$. Therefore, for all $f\in B_R$,
$$
f \in \bigcup_{k=1}^{N_\varepsilon} B^S(f_k, \varepsilon).
$$

Therefore, $B_R$ is totally bounded in $S$. Since $S$ is a complete normed space, all we have to prove is that $B_R$ is closed in $S$. 

Let $(\eta_M)_{M\in \N} $ be a sequence of smooth maps with compact support in $\R^2$ equal to $1$ on $[-M,M]^2$ and such that $\grad_{t,x} \eta_M$ goes to $0$ in $L^\infty$ as $m$ goes to $\infty$. Let $\Pi_N$ be the Fourier multiplier by $\textbf{1}_{|\xi|\leq N}$ in both time and space.

For any $f \in S$, $f$ belongs to $S_s$ if
$$
 \sup_{M\in \N} \sup_{N\in \N}  \Big( \|\Pi_N \eta_M \an{t}^{-1}\varphi D^{s-\kappa} f\|_{L^2}^2 + \|\Pi_N \eta_M \an{t}^{-1}\varphi D^{s-\kappa} \partial_t f\|_{L^2}^2 \Big)
$$
is finite and in this case it is equal to $\|f\|_{S_s}^2$.

For any $f\in S$, $N,M \in \N^2$
$$
K_{M,N}(f)^2 =  \|\Pi_N \eta_M \an{t}^{-1}\varphi D^{s-\kappa} f\|_{L^2}^2 + \|\Pi_N \eta_M \an{t}^{-1}\varphi D^{s-\kappa} \partial_t f\|_{L^2}^2 .
$$
We have 
$$
K_{M,N}(f) \leq N^s \an{M}^{9/2}\varphi(M)^{-1} \|D_t D_x^s \eta_M \an{t}^{-1} \varphi\|_{L^\infty} \|f\|_{S}.
$$

Let $(f_n)_n$ a sequence of $B_R$ that converge to $f$ in $S$. Let $\varepsilon > 0$, and $N,M\in \N^2$. We have by triangular inequality for all $n\in \N$,
$$
K_{M,N(f)} \leq K_{M,N}(f_n) + K_{M,N}(f-f_n).
$$
Because $f_n \in B_R$, we have $K_{M,N}(f_n) \leq R$. Since $f_n$ converges towards $f$ in $S$ we have that for $n$ big enough
$$
K_{M,N}(f-f_n) \leq \varepsilon.
$$
Therefore,
$$
K_{M,N}(f) \leq R + \varepsilon.
$$
By taking the supremum in $M$ and $N$, we get that $f\in S_s$ and
$$
\|f\|_{S_s} \leq R+ \varepsilon
$$
and we let $\varepsilon$ go to $0$ to get the result.

\end{proof}

\subsection{Reduction to rough estimates and convergence}

\begin{proposition}\label{prop1} Assume that for all $x$, and with $\varphi_r(x)$ the map given by Proposition \ref{prop-prob2}, there exists $C_r>0$ such that
$$
\int |u(x)|^r d\rho_L(u) \leq C_r \varphi_r (|x|).
$$
Then, there exists a positive, even, decreasing on $\R^+$  map $\varphi$ such that the Prokhorov-Skorohod method applies, that is,  there exists a sequence $L_n$, a probability space $(\Omega, \mathcal A, P)$, a sequence of random variables on this space $(X_n)_n$ and a random variable $X_\infty$ on this space such that \begin{itemize}
\item for all $n$, the law of $X_n$ is $\nu_{L_n}$, 
\item the sequence $X_n$ converges almost surely in $S$ towards $X_\infty$.
\end{itemize}
\end{proposition}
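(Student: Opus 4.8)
The plan is to verify the two hypotheses of Corollary~\ref{cor-PSmeth} for the family $(\nu_L)_L$ in $S=\mathcal C(\R,H_\varphi)$, with $S_s$ as the auxiliary normed space. The proposition just proved already gives that closed balls of $S_s$ are compact in $S$, so the only thing to produce is a weight $\varphi$ for which $\sup_L\int\|u\|_{S_s}^2\,d\nu_L(u)<\infty$; I would take $\varphi(x)=\an{x}^{-m}$ with $m$ large --- positive, even, decreasing on $\R^+$, and large enough that $\varphi\lesssim\varphi_1$, that $\varphi^{1/(r_V+1)}\lesssim\varphi_1$, and that $\varphi',\varphi''$ are $\lesssim\varphi$. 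Granting the moment bound, Corollary~\ref{cor-PSmeth} hands back the sequence $L_n$, the probability space and the random variables $X_n\sim\nu_{L_n}$, $X_\infty$ with $X_n\to X_\infty$ almost surely in $S$, which is the statement.

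To obtain the moment bound I would first unfold $\nu_L=(\psi_L)_*\rho_L$. Since the flow $\psi_L$ of \eqref{fineq} is $\rho_L$-almost surely globally defined and smooth in $t$ (finite dimensions), expanding the $S_s$-norm of a path, inserting the equation for $\partial_t\psi_L(t)v$, and using Tonelli to move $\int_\R\an{t}^{-2}\,dt$ outside gives
\begin{equation*}
\int\|u\|_{S_s}^2\,d\nu_L(u)=\int_\R\an{t}^{-2}\Big(\int\big\|\varphi D^{s-\kappa}\psi_L(t)v\big\|_{L^2(\R)}^2\,d\rho_L(v)+\int\big\|\varphi D^{s-\kappa}\partial_t\psi_L(t)v\big\|_{L^2(\R)}^2\,d\rho_L(v)\Big)\,dt ,
\end{equation*}
with, from \eqref{fineq}, $\partial_t\psi_L(t)v=-\Pi_{N(L)}J_L\Pi_{N(L)}\lap(\psi_L(t)v)+\Pi_{N(L)}J_L\Pi_{N(L)}\big(\chi_L V'(|\psi_L(t)v|^2)\psi_L(t)v\big)$. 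The one place where dynamics enters is the finite-dimensional invariance of $\rho_L$ under $\psi_L(t)$ (Liouville, the invariance proposition above): for each fixed $t$, $\psi_L(t)v$ has law $\rho_L$ when $v$ does, so the two inner integrals do not depend on $t$, and since $\int_\R\an{t}^{-2}\,dt=\pi$ everything reduces to bounding, uniformly in $L$, the three quantities
\begin{equation*}
\mathrm{(I)}=\int\|\varphi D^{s-\kappa}w\|_{L^2}^2\,d\rho_L(w), \qquad \mathrm{(II)}=\int\|\varphi D^{s-\kappa}\Pi_{N(L)}J_L\Pi_{N(L)}\lap w\|_{L^2}^2\,d\rho_L(w),
\end{equation*}
\begin{equation*}
\mathrm{(III)}=\int\|\varphi D^{s-\kappa}\Pi_{N(L)}J_L\Pi_{N(L)}(\chi_L V'(|w|^2)w)\|_{L^2}^2\,d\rho_L(w) ,
\end{equation*}
where $w$ has law $\rho_L$ (in particular $w$ is already frequency-truncated to $[-N(L),N(L)]$, so one of the two projections is redundant throughout).

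For $\mathrm{(I)}$ I would use that $D^{-\kappa}=(1-\partial_x^2)^{-\kappa/2}$ has a positive, rapidly decaying kernel, so $\|\varphi D^{-\kappa}g\|_{L^2}\lesssim\|\varphi g\|_{L^2}$ for our polynomial weight; with $g=D^sw$ and $\varphi\lesssim\varphi_1$ this makes $\mathrm{(I)}\lesssim\int\|\varphi_1 D^s w\|_{L^2}^2\,d\rho_L(w)\le C_{2,s}$ by the hypothesis with $r=2$. For $\mathrm{(II)}$ I would write $-\lap=-(1-\lap)+1$, so that it reduces to $D^{s-\kappa}J_L(1-\lap)$ and $D^{s-\kappa}J_L=D^{s-\kappa}J_L(1-\lap)(1-\lap)^{-1}$; both are bounded on $L^2$ uniformly in $L$ by Assumption~\ref{assum-J}, which $J_L$ inherits --- and this is precisely the calibration behind the $D^{s-\kappa}$ appearing in $S_s$: the two derivatives produced by $\lap$ are exactly what the smoothing of $J$ gives back. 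Commuting the weight $\varphi$ through these order-zero operators (and through $\Pi_{N(L)}$) produces only lower-order error terms with uniformly bounded coefficients, absorbed by $\varphi$ since $\varphi'/\varphi,\varphi''/\varphi$ are bounded, so $\mathrm{(II)}\lesssim\int\|\varphi_1 D^s w\|_{L^2}^2\,d\rho_L(w)\le C_{2,s}$. For $\mathrm{(III)}$, Assumption~\ref{assum-V} gives the pointwise bound $|\chi_L V'(|w|^2)w|\le C\an{w}^{r_V}|w|\lesssim 1+|w|^{r_V+1}$ uniformly in $L$ (recall $\chi_L\le1$); arguing as in $\mathrm{(II)}$ and then using a weighted form of the Sobolev embedding $H^s\hookrightarrow L^{2r_V+2}$ (Assumption~\ref{assum-sob}), with the weight distributed so as to leave $\varphi_1$ on the right --- this is where $\varphi^{1/(r_V+1)}\lesssim\varphi_1$ is used --- bounds the integrand by $\|\tilde\varphi\|_{L^2}^2+\|\varphi_1 D^s w\|_{L^2}^{2r_V+2}$ for a fixed $L^2$ weight $\tilde\varphi$; the hypothesis with $r=2r_V+2$ then gives $\mathrm{(III)}\le C(1+C_{2r_V+2,s})$. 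Adding the three bounds yields $\sup_L\int\|u\|_{S_s}^2\,d\nu_L(u)<\infty$.

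The hard part will be carrying out $\mathrm{(II)}$ and the weighted form of $\mathrm{(III)}$ rigorously: one must simultaneously trade the two derivatives lost to $\lap$ against the smoothing of Assumption~\ref{assum-J} \emph{and} commute the spatial weight $\varphi$ past the nonlocal operators $D^{s-\kappa}$, the periodised $J_L$ and the projections $\Pi_{N(L)}$, on a torus whose size grows with $L$, without any constant being allowed to depend on $L$ --- which is exactly what fixes the precise decay of $\varphi$ relative to $\varphi_1$ chosen at the outset. Everything else is cheap, and the reason it stays cheap is structural: thanks to the $\an{t}^{-2}$ weight built into $S_s$ together with Liouville's invariance, every time slice of $\nu_L$ carries the fixed law $\rho_L$, so nothing beyond the assumed estimate and the structural Assumptions~\ref{assum-V} and \ref{assum-J} is needed.
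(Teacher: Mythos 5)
Your argument follows the paper's proof essentially step for step: verify the two hypotheses of Corollary~\ref{cor-PSmeth}, unfold $\nu_L$ through the flow, use Tonelli and the Liouville invariance of $\rho_L$ under $\psi_L(t)$ to freeze the time slice, and then bound the three resulting integrals using Assumption~\ref{assum-J} to trade the two derivatives produced by $\lap$ against the $D^{-\kappa}$ built into the norm, and Assumption~\ref{assum-sob} ($H^s\hookrightarrow L^{2r_V+2}$) for the nonlinear term; the final reduction to $C_{2,s}$ and $C_{2r_V+2,s}$ is exactly the paper's.

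The one step that can fail as written is your choice $\varphi(x)=\an{x}^{-m}$. You need $\varphi\lesssim\varphi_1$, but $\varphi_1$ is handed to you by the hypothesis, and in the paper's own verification of that hypothesis (Lemma~\ref{lem-2}) $\varphi_1$ must be taken small enough to dominate weights $\varphi_r^{1/r}$ coming from Proposition~\ref{prop-prob2}, whose growth is governed by the unspecified (possibly very slow) growth of $R(L)$; nothing guarantees that $\varphi_1$ decays only polynomially, so no fixed $m$ need satisfy $\an{x}^{-m}\lesssim\varphi_1$. The paper sidesteps this by \emph{defining} $\varphi$ from $\varphi_1$ itself: after flattening $\varphi_1$ on $[-1,1]$ it sets $\varphi(x)=\varphi_1(|x|+1)$, so that $\varphi\le\varphi_1$ holds by monotonicity and the unit shift provides exactly the room needed when commuting the weight past the nonlocal operators $D^{s-\kappa}$, $J_L$, $\Pi_{N(L)}$. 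Replace your polynomial ansatz by this translate and the rest of your argument goes through unchanged.
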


\begin{remark}
Notice that the probability space introduced in proposition above is different from the one underlying the construction of the measure $\rho_L$; we will use the notation $L^p(\Omega)$ to denote Lebesgue spaces with respect to this measure, as opposed to the $L^p_{\rm proba}$ of Remark \ref{rkproba}.
\end{remark}

\begin{proof} Given Corollary \ref{cor-PSmeth}, all we have to do is prove that there exists $C\geq 0$ such that for all $L$, we have 
$$
\int \|u\|_{S_s}^2 d\nu_L(u)\leq C.
$$

Take $\varphi$ even decreasing on $\R^+$ and such that
$$
\sum_n \varphi(n)^2 \int_{n}^{n+1} \varphi_2(x)dx 
$$
converges.

We have 
$$
\int \|u\|_{S_s}^2 d\nu_L(u) =A +B
$$
with 
$$
A  = \int_{S}\int_{\R}dt \an{t}^{-2} \|\varphi D^{s-\kappa} u(t) \|_{L^2(\R)}^2 d\nu_{L}(u)
$$
and 
$$
B = \int_{S}\int_{\R}dt \an{t}^{-2} \|\varphi D^{s-\kappa} \partial_t u(t) \|_{L^2(\R)}^2 d\nu_L(u).
$$
We use the definition of $\nu_L$ in terms of the flow $\psi_L$ to get
$$
A  = \int_{H_\varphi}\int_{\R}dt \an{t}^{-2} \|\varphi D^{s-\kappa}\psi_L(t) u\|_{L^2(\R)}^2 d\rho_{L}(u)
$$
and 
$$
B = \int_{H_\varphi}\int_{\R}dt \an{t}^{-2} \|\varphi D^{s-\kappa} \partial_t \psi_L(t) u \|_{L^2(\R)}^2 d\rho_L(u).
$$
We can exchange the integral in time and in probability to get
$$
A  =\int_{\R}dt \an{t}^{-2} \int_{H_\varphi} \|\varphi D^{s-\kappa}\psi_L(t) u \|_{L^2(\R)}^2 d\rho_{L}(u)
$$
and 
$$
B =\int_{\R}dt \an{t}^{-2} \int_{H_\varphi} \|\varphi D^{s-\kappa} \partial_t \psi_L(t)u \|_{L^2(\R)}^2 d\rho_L(u).
$$
We use the fact that $\psi_L(t)u$ solves the equation
$$
i\partial_t \psi_L(t) u = -\Pi_{N(L)}J_L \Pi_{N(L)}\lap \psi_L(t) u + \Pi_{N(L)}J_L\Pi_{N(L)} \chi_L V'(|\psi_L(t)u|^2)\psi_L(t) u 
$$
to get
\begin{multline*}
B  \leq \int_{\R}dt \an{t}^{-2} \int_{H_\varphi} \|\varphi D^{s-\kappa} \Pi_{N(L)}J_L \Pi_{N(L)}\lap \psi_L(t) u \|_{L^2(\R)}^2 d\rho_L(u) + \\
\int_{\R}dt \an{t}^{-2} \int_{H_\varphi} \|\varphi D^{s-\kappa} \Pi_{N(L)}J_L \Pi_{N(L)} \chi_L V'(|\psi_L(t)u|^2)\psi_L(t) u \|_{L^2(\R)}^2 d\rho_L(u).
\end{multline*}
Using the invariance of $\rho_L$ under $\Psi_L$ we get
\begin{multline*}
B  \leq \int_{\R}dt \an{t}^{-2} \int_{H_\varphi} \|\varphi D^{s-\kappa} \Pi_{N(L)}J_L \Pi_{N(L)}\lap u \|_{L^2(\R)}^2 d\rho_L(u) + \\
\int_{\R}dt \an{t}^{-2} \int_{H_\varphi} \|\varphi D^{s-\kappa} \Pi_{N(L)}J_L \Pi_{N(L)} \chi_L V'(|\psi_L(t)u|^2) u \|_{L^2(\R)}^2 d\rho_L(u).
\end{multline*}

Using the decreasing character of $\varphi$, we get
$$
\int \|\varphi D^{s-\kappa} \Pi_{N(L)}J_L \Pi_{N(L)}\lap  u \|_{L^2(\R)}^2 d\rho_L (u) \leq I + II
$$
with 
$$
I = \sum_{n\in \N } \varphi(n)^2 \int \| D^{s-\kappa} \Pi_{N(L)}J_L \Pi_{N(L)} \lap  u  \|_{L^2([n,n+1[}^2 d\rho_L(u)
$$
and 
$$
II = \sum_{n\in -\N } \varphi(n)^2 \int \| D^{s-\kappa}\Pi_{N(L)} J_L \Pi_{N(L)} \lap  u  \|_{L^2([n,n+1[}^2 d\rho_L(u).
$$
We have 
$$
 \| D^{s-\kappa} \Pi_{N(L)}J_L \Pi_{N(L)} \lap  u  \|_{L^2([n,n+1)[}^2  \lesssim \alpha + \beta
$$
With 
$$
\alpha = \| D^{s-\kappa} J_L \Pi_{N(L)} \lap  u  \|_{L^2([n,n+1)[}^2
$$
and 
$$
\beta = \| D^{s-\kappa}(1- \Pi_{N(L)})J_L \Pi_{N(L)} \lap  u  \|_{L^2([n,n+1])}^2.
$$
We have that 
$$
D^{s-\kappa} J_L \lap = D^{s-\kappa} \eta_L D^{\kappa - s} D^{s-\kappa} J (\lap - 1) (\lap - 1)^{-1} \eta_L (\lap - 1) + D^{s-\kappa} \eta_L D^{\kappa - s} D^{s-\kappa} J \eta_L.
$$
Because $\eta_L$ is smooth, bounded and with derivatives bounded, we get that 
$$
\|D^{s-\kappa} \eta_L D^{\kappa - s} \|_{L^2\rightarrow L^2}  \textrm{ and } \|(\lap - 1)^{-1} \eta_L (\lap - 1) \|_{L^2\rightarrow L^2}
$$
are bounded uniformly in $L$. Therefore, using the locality of $J$ and Assumption \ref{assum-J},
$$
\alpha \lesssim \|\Pi_N u \|_{L^2([n,n+1)[}^2.
$$
We use that $\rho_L$ almost surely $\Pi_Nu = u$, to get
$$
\int \alpha d\rho_L(u)  = \int  \| u\|_{L^2([n,n+1])}^2 d\rho_L(u) \leq \int_{n}^{n+1} \varphi_2(|x|)
$$
For $\beta$ we use that
$$
\beta \leq \| D^{s-\kappa}(1- \Pi_{N(L)})J_L \Pi_{N(L)} \lap  u  \|_{L^2([n, n + 2\pi L])}^2.
$$
Because of $2\pi L$ periodicity, we get
$$
\beta \leq N(L)^{-1/4} \| D^{s-\kappa+1/4}J_L \Pi_{N(L)} \lap  u  \|_{L^2([0,  2\pi L])}^2.
$$
Because of the assumption on $J$ we get
$$
\beta \leq N(L)^{-1/4} \| u\|_{L^2([0,2\pi L])}^2
$$
and thus
$$
\int \beta d\rho_L(u) \leq N(L)^{-1/4} \int \varphi_2(|x|).
$$
We use the assumptions on $N(L)$ to get that the right hand side above is uniformly bounded in $L$.

We get 
$$
 I  \lesssim \sum_{n \in \N} \varphi(n)^2 \left(1+ \int_{n}^{n+1} \varphi_2(x) dx\right) < \infty
$$

We proceed in the same way for $II$ and we integrate to get 
$$
\int_{\R}dt \an{t}^{-2} \int_{H_\varphi} \|\varphi D^{s-\kappa} \Pi_{N(L)}J_L \Pi_{N(L)}\lap \psi_L(t) u \|_{L^2(\R)}^2 d\rho_L(u)  \lesssim
\int_{\R}dt \an{t}^{-2} 
$$
and since $\an{t}^{-2}$ is integrable, we get a first estimate.

We proceed in the same way for the second part of $B$ and we get
\begin{multline*}
\int_{H_\varphi} \|\varphi D^{s-\kappa} \Pi_{N(L)}J_L \Pi_{N(L)} \chi_L V'(|\psi_L(t)u|^2) \psi_L(t) u \|_{L^2(\R)}^2 d\rho_L(u) \lesssim \\ \int_{H_\varphi} \|\an u^{r_V+1}\|_{L^2([n,n+1]}^2 d\rho_L(u)+N(L)^{-1/4} \int_{H_\varphi} \|\an u^{r_V+1}\|_{L^2([0,2\pi L]}^2 d\rho_L(u).
\end{multline*}
We get
$$
\int_{H_\varphi} \|\varphi D^{s-\kappa} \Pi_{N(L)}J_L \Pi_{N(L)} \chi_L V'(|\psi_L(t)u|^2) \psi_L(t) u \|_{L^2(\R)}^2 d\rho_L(u) \lesssim   \int_{n}^{n+1}\varphi_{2r_V+2}dx+N(L)^{-1/4} \int_{0}^{2\pi L} \varphi_{2r_V+2}.
$$
We recall $\varphi_r$ and $\varphi_2$ differ only by a constant independent from $L$ to get that $B$ is bounded uniformly in $L$.

For A, we use the invariance of $\rho_L$ under $\psi_L(t)$ to get
$$
A = \int_{\R}dt \an{t}^{-2} \int_{H_\varphi} \|\varphi D^{s-\kappa} u \|_{L^2(\R)}^2 d\rho_{L}(u)
$$
and we proceed in the same way to get that $A$ is bounded independently from $L$.

\end{proof}

\begin{proposition}\label{prop2} Assume that $\rho_L \rightarrow \rho$ weakly in $H_\varphi$. Assume that for all $r\geq 2$, $s<\frac12$, there exist $C_{r,s}$ and a positive, even, decreasing on $\R^+$ map $\varphi_1$ such that for all $L$
$$
\int \Big( \| \varphi_1 D^s u \|_{L^2(\R)}^r \Big) d\rho_L(u)\leq C_{r,s}.
$$
Then, the random variable $X_\infty$ given by the Prokhorov-Skorohod method satisfies \begin{itemize}
\item for all $t\in \R$, the law of $X_\infty$ is the weak limit $\rho$ of $\rho_{L_n}$, and thus do not depend on time,
\item $X_\infty$ is a weak solution (in the sense of distribution) of 
$$
\partial_t u = -J \lap u + J V'(|u|^2)u.
$$
\end{itemize}
\end{proposition}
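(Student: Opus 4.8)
The strategy is to pass to the limit $n\to\infty$ in the Skorohod sequence $X_n=X_{L_n}$ produced by Proposition \ref{prop1}, using that each $X_n$ solves (in finite dimension, hence classically in $t$) the equation
$$\partial_t X_n=-\Pi_{N(L_n)}J_{L_n}\Pi_{N(L_n)}\lap X_n+\Pi_{N(L_n)}J_{L_n}\Pi_{N(L_n)}\chi_{L_n}V'(|X_n|^2)X_n$$
and that $X_n\to X_\infty$ almost surely in $S=\mathcal C(\R,H_\varphi)$.

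For the law, fix $t$: almost sure convergence in $S$ forces $X_n(t)\to X_\infty(t)$ almost surely in $H_\varphi$, hence in law. By the strong invariance of $\rho_{L_n}$ under $\psi_{L_n}$ (the invariance Proposition of Section 1), the law of $X_n(t)=\psi_{L_n}(t)X_n(0)$ is $\rho_{L_n}$ for every $t$; therefore the law of $X_\infty(t)$ is the weak limit of $\rho_{L_n}$ in $H_\varphi$, which by hypothesis is $\rho$, independently of $t$. The remaining qualitative claims of Theorem \ref{teo1} on $X_\infty(t)$ (Hölder regularity, absolute continuity and $x,t$-independence of the law of $X_\infty(t,x)$) are then inherited from the known properties of $\rho$, and $X_\infty\in\mathcal C(\R,\mathcal D')$ follows from $H_\varphi\hookrightarrow\mathcal D'$.

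For the equation, fix $\phi\in\mathcal C_c^\infty(\R_t\times\R_x)$ and take $n$ large enough that $\mathrm{supp}\,\phi$ lies where $\eta_{L_n}\equiv 1$ and $\chi_{L_n}\equiv 1$. Testing the equation above against $\phi$, integrating by parts in $t$, and using that $J_{L_n}$ is skew-symmetric, that $\Pi_{N(L_n)}$ and $\lap$ are self-adjoint, and that $\lap$ commutes with $\Pi_{N(L_n)}$, one obtains
\begin{multline*}
-\iint X_n\,\partial_t\phi \;=\; \iint X_n\,\lap\left(\Pi_{N(L_n)}J_{L_n}\Pi_{N(L_n)}\phi\right)\\
-\iint \chi_{L_n}\,V'(|X_n|^2)\,X_n\cdot\left(\Pi_{N(L_n)}J_{L_n}\Pi_{N(L_n)}\phi\right).
\end{multline*}
Since $J\phi$ is again smooth with compact support (Assumption \ref{assum-J}), one has $\Pi_{N(L_n)}J_{L_n}\Pi_{N(L_n)}\phi\to J\phi$ strongly in every Sobolev space (using $\eta_{L_n}\to 1$ and $N(L_n)\to\infty$), hence in the dual of $H_\varphi$ uniformly on $\mathrm{supp}_t\,\phi$; combined with $X_n\to X_\infty$ in $S$, the left-hand side and the first right-hand term converge to the same expressions with $X_n$ replaced by $X_\infty$ and $\Pi_{N(L_n)}J_{L_n}\Pi_{N(L_n)}\phi$ replaced by $J\phi$. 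Granting the convergence of the nonlinear term (see below), a routine integration by parts identifies the limit with the weak formulation of $\partial_t u=-J\lap u+JV'(|u|^2)u$; running this over a countable family of test functions dense in $\mathcal C_c^\infty$ and intersecting the corresponding full-measure events gives the distributional identity almost surely.

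The main obstacle is the nonlinear term, since $H_\varphi$ carries negative spatial regularity and $u\mapsto V'(|u|^2)u$ is not even defined on it. To handle it I would first transfer the moment bounds of the hypothesis to the processes: by invariance of $\rho_{L_n}$ under $\psi_{L_n}$, $\E\,\|\varphi_1 D^s X_n(t)\|_{L^2}^r\le C_{r,s}$ uniformly in $t$ and $n$, and integrating against $\an{t}^{-2}$ gives a bound, uniform in $n$, on $X_n$ in $L^r\bigl(\Omega;L^2_{\mathrm{loc}}(\R_t,H^s_{\mathrm{loc}}(\R_x))\bigr)$; by Fatou, $X_\infty$ satisfies the same bound. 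Interpolating this uniform $H^s_{\mathrm{loc}}$ control with the almost sure convergence $X_n\to X_\infty$ in $\mathcal C(\R,H_\varphi)$ upgrades the convergence to $X_n\to X_\infty$ in $L^2_{\mathrm{loc}}(\R_t,H^{s'}_{\mathrm{loc}}(\R_x))$ for every $s'<s$, in probability on $\R_t\times\Omega$, hence almost surely along a subsequence. Taking $s'$ close enough to $s$ that $H^{s'}\hookrightarrow L^{2r_V+2}$ still holds — Assumption \ref{assum-sob} leaves room, as it even gives $H^s\hookrightarrow L^{2r_V+4}$ — and passing to a further subsequence with $X_n\to X_\infty$ almost everywhere in $(t,x)$ almost surely, the growth bound $|V'(|u|^2)u|\lesssim\an{u}^{r_V+1}$ of Assumption \ref{assum-V} together with the uniform $L^{2r_V+4}_{\mathrm{loc}}$ control (whence uniform integrability of $|X_n|^{r_V+1}$ on compact space-time sets, the exponent $(2r_V+4)/(r_V+1)$ being $>1$) lets Vitali's convergence theorem yield $V'(|X_n|^2)X_n\to V'(|X_\infty|^2)X_\infty$ in $L^1_{\mathrm{loc}}(\R_t\times\R_x)$ almost surely. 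Since moreover $\chi_{L_n}\to 1$ boundedly (indeed in $\an{x}L^\infty$) and $\Pi_{N(L_n)}J_{L_n}\Pi_{N(L_n)}\phi\to J\phi$ in $L^\infty_{\mathrm{loc}}$, the nonlinear term converges to $-\iint V'(|X_\infty|^2)X_\infty\cdot J\phi$, which closes the argument.
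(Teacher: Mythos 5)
Your overall architecture coincides with the paper's: identify the law of $X_\infty(t)$ via almost sure convergence in $S$ plus invariance of $\rho_{L_n}$ under $\psi_{L_n}$, then pass to the limit in the approximating equations, with the linear part handled through the assumptions on $J$ (the paper estimates $\|D^{-\kappa'}(J_nX_n-\Pi_nJ_n\Pi_nX_n)\|$ directly rather than moving the operator onto the test function, but both routes use the same ingredients). Where you genuinely diverge is the nonlinear term. The paper writes the pointwise mean-value estimate
$$
|V'(|X_\infty|^2)X_\infty - V'(|X_n|^2)X_n| \lesssim \an{X_\infty}^{r_V}|X_n-X_\infty| + \big(\an{X_\infty}^{r_V}+\an{X_n}^{r_V}\big)|X_n|(|X_\infty|+|X_n|)|X_\infty-X_n|,
$$
applies H\"older in the weighted space $L^1(\Omega\times\R\times\R)$, and then controls the resulting factor $\|\an{x}^{-1}\varphi\an{t}^{-6}(X_\infty-X_n)\|_{L^2(\Omega\times\R\times\R)}$ by the finite-dimensional truncation trick from the compactness proposition together with dominated convergence of $\E\|X_\infty-X_n\|_S^2$. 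You instead interpolate the uniform $H^s$ moment bounds against the almost sure convergence in the negative-regularity space $H_\varphi$ to upgrade to convergence in $H^{s'}_{\mathrm{loc}}$, $s'<s$, and then invoke Vitali. Both are legitimate; the paper's version keeps everything in weighted global norms and avoids interpolation, yours is arguably more transparent about why the Sobolev room in Assumption \ref{assum-sob} (up to $L^{2r_V+4}$) is exactly what is needed.

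One step of yours needs to be rearranged, though it does not change the architecture. You claim uniform integrability of $|X_n|^{r_V+1}$ \emph{on compact space-time sets} and apply Vitali \emph{almost surely}. But your uniform $L^{2r_V+4}_{\mathrm{loc}}$ control is a bound on expectations, uniform in $n$; for a fixed $\omega$ the quantities $\int_K|X_n(\omega)|^{2r_V+4}$ need not be bounded in $n$, so the family $\{|X_n(\omega)|^{r_V+1}\}_n$ need not be uniformly integrable on $K$ pathwise, and the pathwise Vitali argument as stated is not justified. The fix is to apply Vitali (equivalently, H\"older plus the a.e.\ convergence) on the product space $\Omega\times K$, where the uniform $L^{(2r_V+4)/(r_V+1)}(\Omega\times K)$ bound does give uniform integrability; this yields convergence of the nonlinear term in $L^1(\Omega\times K)$, from which you extract a further almost surely convergent subsequence. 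This is in effect what the paper does by working in $L^1(\Omega\times\R\times\R)$ with weights throughout and extracting the subsequence only at the end.
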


\begin{proof} The fact that the law of $X_\infty(t)$ is $\rho$ at all times is due to the fact that $X_n$ converges almost surely in $S= \mathcal C(\R,H_\varphi)$. Hence for all $t$, $X_n(t)$ converges almost surely towards $X_\infty(t)$ in $H_\varphi$. Since the almost sure convergence implies the convergence in law, we get that the law of $X_\infty$ is the limit of the laws of $X_n(t)$, $\rho_{L_n}$, and hence is $\rho$.

Let us prove that $X_\infty$ is a weak solution to 
$$
\partial_t u = J \lap u - J V'(|u|^2)u.
$$

We have that 
$$
\partial_t X_\infty -J \lap X_\infty
$$
is almost surely the limit in terms of distributions of 
$$
\partial_t X_n - \Pi_n J \Pi_n \lap X_n
$$
where $\Pi_n = \Pi_{N(L_n)}$.

Indeed, let $f$ be a $\mathcal C^\infty$ with compact support test function of $\R^2$. Since $f$ has compact support, for $L_n$ big enough, we get
$$
\Big| \an{f,\Pi_n J \Pi_n X_n} - \an{f,JX_\infty} \Big| \leq \Big| \an{(J - \Pi_n J \Pi_n) f, X_n}\Big| + \Big| \an{ J f, X_n-X_\infty }\Big|
$$
where $\an{\cdot, \cdot}$ is the inner product and $\Pi_n$ when applied to $f$ stands for the Fourier multiplier $\widehat{\Pi_n f}(k) = \eta_n (k) \hat f (k)$ where $\eta_n$ is a $\mathcal C^\infty$ function which is equal to $1$ on $[-N(L_n),N(L_n)]$ and to $0$ outside $[-N(L_n) -\frac1{2L_n},N(L_n) + \frac1{2L_n}]$. Since $X_n$ converges towards $X_\infty$ in $S$, $X_n(t)$ converges towards $X_\infty (t)$ in $H_\varphi$, hence 
$$
\Big| \an{(J - \Pi_n J \Pi_n) f(t), X_n(t)}\Big| \leq \|(J - \Pi_n J \Pi_n) f(t)\| \sup_n \|X_n(t)\|_\varphi \leq \|(J - \Pi_n J \Pi_n) f\| \sup_n \|X_n\|_S
$$
where $\|\cdot\|$ is the norm of the dual of $H_\varphi$. We have
$$
\|(J - \Pi_n J \Pi_n) f(t)\| = \|\varphi^{-1}(x) \an x D^\kappa (J - \Pi_n J \Pi_n) f(t)\|_{L^2}.
$$
As $f(t)$ has a compact support, we get
$$
\|(J - \Pi_n J \Pi_n) f(t)\| \leq \sup_{x\in\textrm{supp }f}\Big( \varphi^{-1}(x) \an x \Big)\| D^\kappa (J - \Pi_n J \Pi_n) f(t)\|_{L^2}.
$$
Since $J - \Pi_n J \Pi_n = (1-\Pi_n) J + \Pi_n J(1-\Pi_n)$ and thanks to Assumption \ref{assum-J}, we have for $\sigma > 0$,
$$
\| D^\kappa (J - \Pi_n J \Pi_n) f(t)\|_{L^2} \lesssim N(L_n)^{-\sigma} \|f(t)\|_{H^{\sigma + 2\kappa}},
$$
from which we deduce,
\begin{multline*}
\Big| \an{f,\Pi_n J_L \Pi_n X_n} - \an{f,JX_\infty} \Big| \leq \sup_{(t,x)\in \textrm{supp }f} \Big( \an t \varphi^{-1}(x) \an x \|f(t)\|_{H^{\sigma + 2\kappa}}\Big) N(L_n)^{-\sigma} + \\
\sup_{(t,x)\in \textrm{supp }f} \Big( \an t \|Jf(t)\|\Big) \|X_n-X_\infty\|_{S}
\end{multline*}
which goes to $0$ when $n$ goes to $\infty$.

Besides, we have 
$$
| V'(|X_\infty|^2)X_\infty - V'(|X_n|^2)X_n| \leq |V'(|X_\infty|^2) | \, |X_n - X_\infty| + \left(\sup_{[|X_\infty|^2,|X_n|^2]}|V''|\right) \, |X_n| (|X_\infty|+|X_n|)|X_\infty - X_n|.
$$
With the hypothesis on $V$, Assumption \ref{assum-V}, we get
$$
| V'(|X_\infty|^2)X_\infty - V'(|X_n|^2)X_n| \lesssim \an{X_\infty}^{r_V}   |X_n - X_\infty| + \Big( \an{X_\infty}^{r_V} + \an{X_n}^{r_V}\Big) \, |X_n| (|X_\infty|+|X_n|)|X_\infty - X_n|.
$$
Therefore, for all weight functions $g$ such that $g=h^r$ with
$$
\|\an t \varphi_1^{-1} D^s h\|_{L^\infty_{x,t}} < \infty,
$$
we have
\begin{multline*}
\|g(x,t)\an{x}^{-1}\varphi \an{t}^{-6} (V'(|X_\infty|^2)X_\infty - V'(|X_n|^2)X_n)\|_{L^1(\R\times \R)} \lesssim \\
\Big( 1+ \|g(x,t)X_\infty^{r_V}\|_{L^2(\R\times \R)} + \|g(x,t)X_\infty^{r_V+2}\|_{L^2(\R\times \R)}+\|g(x,t)X_n^{r_V+2}\|_{L^2(\R\times \R)}\Big) \|\an{x}^{-1}\varphi \an{t}^{-6}(X_\infty-X_n)\|_{L^2(\R\times \R)}.
\end{multline*}
By taking the $L^1$ norm in probability, we get
\begin{multline*}
\|g(x,t)\an{x}^{-1}\varphi \an{t}^{-6} (V'(|X_\infty|^2)X_\infty - V'(|X_n|^2)X_n)\|_{L^1(\Omega \times \R\times \R)} \lesssim\\
\Big( 1+ \|g(x,t)X_\infty^{r_V}\|_{L^2(\Omega \times \R\times \R)} + \|g(x,t)X_\infty^{r_V+2}\|_{L^2(\Omega \times \R\times \R)}+\|g(x,t)X_n^{r_V+2}\|_{L^2(\Omega \times \R\times \R)}\Big) \|\an{x}^{-1}\varphi \an{t}^{-6}(X_\infty-X_n)\|_{L^2(\Omega \times \R\times \R)}.
\end{multline*}

For $r = r_V$ or $r=r_V+2$, we get, using Sobolev's estimates,
$$
\|g(x,t)X_n^{r}\|_{L^2}^2 \leq \|\an t \varphi_1^{-1} D^s h\|_{L^\infty_{x,t}}^{2r} \E \Big(\int \an{t}^{-2}dt \| \varphi_1 D^s X_n \|_{L^2(\R)}^{2r}\Big).
$$
We exchange the integrals in time and probability to get
$$
\|g(x,t)X_n^{r}\|_{L^2}^2 \leq \int \an{t}^{-2}dt \E \Big(\| \varphi_1 D^s X_n \|_{L^2(\R)}^{2r}\Big).
$$
Given the law of $X_n$, this yields
$$
\E \Big(\| \varphi_1 D^s X_n \|_{L^2(\R)}^{2r}\Big) = \int \Big( \| \varphi_1 D^s u \|_{L^2(\R)}^{2r} \Big) d\rho_L(u)\leq C_{2r,s}.
$$
From which we deduce 
\begin{multline*}
\|g(x,t)\an{x}^{-1}\varphi \an{t}^{-2} (V'(|X_\infty|^2)X_\infty - V'(|X_n|^2)X_n)\|_{L^1(\Omega \times \R\times \R)} \lesssim \\
\Big( 1+ C_{2r_V,s} + C_{2r_V+4,s}\Big) \|\an{x}^{-1}\varphi \an{t}^{-2}(X_\infty-X_n)\|_{L^2(\Omega \times \R\times \R)}.
\end{multline*}

For 
$$
\|\an{x}^{-1}\varphi \an{t}^{-6}(X_\infty-X_n)\|_{L^2(\Omega \times \R\times \R)},
$$
we fix some time $t$ and consider 
$$
\|\an{x}^{-1}\varphi (X_\infty(t)-X_n(t))\|_{L^2(\Omega \times \R)}
$$
Following the proof of Proposition \eqref{compball} we get that for all $\varepsilon > 0$, there exists $X,N$ such that for all $n$,
$$
\|\an{x}^{-1}\varphi (X_n - X_n^{X,N})\|_{L^2(\R)} \leq \varepsilon \|\varphi_1 D^s X_n\|_{L^2(\R)}.
$$
We integrate in probability to get 
$$
\|\an{x}^{-1}\varphi (X_n - X_n^{X,N})\|_{L^2\Omega \times \R)} \leq \varepsilon \|\varphi_1 D^s X_n\|_{L^2(\Omega \times \R)} \leq \varepsilon \sqrt{C_{2,s}}.
$$
We recall that $C_{2,s}$ does not depend on $n$.  Hence, we have 
$$
\|\an{x}^{-1}\varphi (X_\infty(t)-X_n(t))\|_{L^2(\Omega \times \R)}\leq \sqrt{C_{2,s}}\varepsilon + \|\an{x}^{-1}\varphi (X_\infty(t)^{X,N}-X_n^{X,N}(t))\|_{L^2(\Omega \times \R)}.
$$
We use the fact that $(X_\infty(t)^{X,N}-X_n^{X,N}(t))$ belongs to a space of finite dimension to get
$$
\|\an{x}^{-1}\varphi (X_\infty(t)^{X,N}-X_n^{X,N}(t))\|_{L^2(\Omega \times \R)} \leq C(T,N) \|X_\infty(t)^{X,N}-X_n^{X,N}(t)\|_\varphi
$$
and finally
$$
\|\an{x}^{-1}\varphi (X_\infty(t)^{X,N}-X_n^{X,N}(t))\|_{L^2(\Omega \times \R)} \leq C_1(T,N) \|X_\infty(t)-X_n(t)\|_\varphi.
$$
Integrating in time yields 
$$
\|\an{x}^{-1}\varphi \an{t}^{-6}(X_\infty-X_n)\|_{L^2( \Omega \times \R\times \R)}^2 \leq C_2\varepsilon + C_1(T,N) \E\Big(  \int \frac{dt}{\an{t}^{12}} \|X_\infty(t)-X_n(t)\|_\varphi^2\Big)
$$
which gives 
$$
\|\an{x}^{-1}\varphi \an{t}^{-6}(X_\infty-X_n)\|_{L^2( \Omega \times \R\times \R)}^2 \leq C_2\varepsilon + C_1(T,N) \E\Big(  \|X_\infty-X_n\|_S^2\Big).
$$
By the dominated convergence theorem, $\E\Big(  \|X_\infty-X_n\|_S^2\Big)$ converges towards $0$. Indeed, Let $R\geq 0$, and let $f_n = \|X_\infty-X_n\|_S^2$, let $g_n = 1_{f_n\leq R} f_n$. We have that $g_n$ converges almost surely towards $0$ and $g_n$ is bounded. Hence, $\E(g_n)$ converges towards $0$ by DCT. Besides, $f_n = g_n + 1_{f_n > R}f_n$ and 
$$
\E(1_{f_n > R}f_n) \leq \sqrt{P(f_n> R)} \E(f_n^2)^{1/2} \leq R^{-1} \E(f_n^2).
$$
Finally, $\E(f_n^2) \lesssim \E( \|X_\infty\|_S^4 + \|X_n\|_S^4)$ is uniformly bounded in $n$. 

From that we deduce that
$$
\|g(x,t)\an{x}^{-1}\varphi \an{t}^{-6} (V'(|X_\infty|^2)X_\infty - V'(|X_n|^2)X_n)\|_{L^1(\Omega \times \R\times \R)}
$$
goes to $0$ when $n$ goes to $\infty$. Since $\chi_L$ goes to $1$ in $\an{x}L^\infty$, we get that
$$
\|g(x,t)\an{x}^{-2}\varphi \an{t}^{-6} (V'(|X_\infty|^2)X_\infty - \chi_{L_n}V'(|X_n|^2)X_n)\|_{L^1(\Omega \times \R\times \R)}
$$
goes to $0$ when $n$ goes to $\infty$, which ensures that almost surely, up to a subsequence, $\chi_{L_n}V'(|X_n|^2)X_n$ converges towards $V'(|X_\infty|^2)X_\infty$ in the norm $\| g \an{x}^{-2} \varphi \an{t}^{-6} \cdot \|_{L^1(\R\times \R)}$. By taking $g$ equal to $1$ on some compact set, we get convergence in $L^1_{loc}(\R\times \R)$. Hence, almost surely, up to a subsequence, and in the sense of distributions
$$
\Pi_n J_L \Pi_n \chi_{L_n}V'(|X_n|^2)X_n) \underset{n\rightarrow \infty}{\longrightarrow} J V'(|X_\infty|^2)X_\infty.
$$

Finally, almost surely, up to a subsequence, we have that
$$
0 = \partial_t X_n + \Pi_nJ \Pi_n\lap X_n - \Pi_nJ\Pi_n \chi_{L_n}V'(|X_n|^2)X_n
$$
converges towards
$$
\partial_t X_\infty + J\lap X_\infty - J V'(|X_\infty|^2)X_\infty
$$
which ensures that almost surely,
$$
\partial_t X_\infty + J\lap X_\infty - J V'(|X_\infty|^2)X_\infty = 0
$$
\end{proof}

\section{Proofs of the estimates and convergence}

We devote this section to prove a uniform-in-L moment bound (Proposition \ref{prop-est}) and the weak convergence of $\rho_L$ towards $\rho$ (Proposition \ref{convprop}). With these two ingredients, Theorem \eqref{teo1} follows from Proposition \ref{prop2}.

\subsection{Estimates}

We recall the assumptions on $\chi_L$, Assumption \ref{assum-chi}: it is a $\mathcal C^\infty$ function such that $\chi_L(x)=1$ if $x\in [-R(L),R(L)]$, $\chi_L(x) = 0$ if $x\notin [-R'(L),R'(L)]$ and $\chi_L(x) \in [0,1]$. And we recall that $R(L)$ has been chosen small enough such that
$$
Z_{L,3} = \E \Big( e^{-\int_R(L)^R(L) V(|\xi(x)|^2)dx }\Big) \geq L^{-1/6},
$$
and that $R'(L)$ has been chosen close enough to $R(L)$ such that $R'(L) - R(L) \leq \frac{1}{CL^{1/2}}$ with $C$ a constant big enough.

\begin{proposition}\label{prop-est} For all $r\geq 2$, all $s<\frac12$, there exists $C_{r,s}$ and a positive, even, decreasing on $\R^+$ map $\varphi_1$ such that for all $L$
$$
\int \Big( \| \varphi_1 D^s u \|_{L^2(\R)}^r \Big) d\rho_L(u)\leq C_{r,s}.
$$
What is more, for all $r\geq 2$ we have 
$$
\int |u(x)|^r d\rho_L(u) \lesssim \varphi_r(x).
$$
\end{proposition}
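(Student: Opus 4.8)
The plan is to derive the estimate from the corresponding bound for $\rho_{L,3}$, which is available through Proposition \ref{prop-prob2}, and to absorb the discrepancy between $\rho_L$ and $\rho_{L,3}$ using the choices of $N(L),R(L),R'(L),\chi_L$. As a preliminary, note a standard localized Sobolev estimate: decomposing $\varphi_1 D^s u = D^s(\varphi_1 u)+[\varphi_1,D^s]u$, using the Gagliardo description of $H^s$ for $s\in(0,\tfrac12)$, the rapid decay and Lipschitz bound of $\varphi_1$, and that the kernel of $[\varphi_1,D^s]$ is dominated by an integrable convolution kernel times a rapidly decaying weight, one obtains, for a slightly fatter rapidly decaying even weight $\varphi_2$,
$$
\|\varphi_1 D^s u\|_{L^2(\R)}^2\lesssim\int_\R\varphi_2(x)^2|u(x)|^2\,dx+\iint_{|x-y|\le1}\varphi_2(x)^2\frac{|u(x)-u(y)|^2}{|x-y|^{1+2s}}\,dx\,dy .
$$
Raising to the power $r/2\ge1$ and applying Minkowski's integral inequality in $L^{r/2}$ of any probability measure $m$,
$$
\Big(\int\|\varphi_1 D^s u\|_{L^2}^r\,dm\Big)^{2/r}\lesssim\int_\R\varphi_2(x)^2\Big(\int|u(x)|^r dm\Big)^{2/r}dx+\iint_{|x-y|\le1}\frac{\varphi_2(x)^2}{|x-y|^{1+2s}}\Big(\int|u(x)-u(y)|^r dm\Big)^{2/r}dx\,dy .
$$
Applying this to $m=\rho_{L,3}$ and inserting Proposition \ref{prop-prob2}, the two inner integrals are bounded by $\varphi_{r,s}(|x|)$ and $\varphi_{r,s}(\max(|x|,|y|))|x-y|^{1+rs}$; since $\varphi_{r,s}$ grows at most exponentially (the lower bound $Z_{L,3}\ge L^{-1/6}$ of Assumption \ref{assum-R} forces $R(L)\lesssim\log L$) while $\varphi_2$ has Gaussian decay, and since in the second term the power of $|x-y|$ is $2/r-1>-1$, both outer integrals converge to a constant independent of $L$. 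Hence $\int\|\varphi_1 D^s u\|_{L^2}^r\,d\rho_{L,3}\le C'_{r,s}$ uniformly in $L$; from now on fix $\varphi_1(x)=e^{-x^2}$, which works for every $r$.

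For the transfer, write $\Phi(u)=\|\varphi_1 D^s u\|_{L^2(\R)}^r$, so that $\int\Phi\,d\rho_L=Z_L^{-1}\E\big[\Phi(\xi_L^f)e^{-\int\chi_L V(|\xi_L^f|^2)}\big]$ and $Z_{L,3}\int\Phi\,d\rho_{L,3}=\E\big[\Phi(\xi)e^{-\int_{-R(L)}^{R(L)}V(|\xi|^2)}\big]$. One compares these by replacing $\xi_L^f$ by $\xi$ (through $\xi_L$) and $\chi_L$ by $\mathbf 1_{[-R(L),R(L)]}$, inside both $\Phi$ and the exponential, using: $\|D^s(\xi_L^f-\xi_L)(x)\|_{L^q_{\textrm{proba}}}\lesssim N(L)^{-(1-2s)/2}\le L^{-1/6}$ (Assumption \ref{assum-N}); $\|D^s(\xi-\xi_L)(x)\|_{L^q_{\textrm{proba}}}\lesssim L^{-1}\an x$ (Proposition \ref{prop-prob1}); $\chi_L-\mathbf 1_{[-R(L),R(L)]}$ supported on a set of length $\le2/(C\sqrt L)$ (Assumptions \ref{assum-R'}, \ref{assum-chi}); $Z_{L,3}\ge L^{-1/6}$, which also gives $R(L)\lesssim\log L$ and hence uniformly bounded Gaussian moments at all points of $[-R'(L),R'(L)]$; and $\E|D^s\xi_L^f(x)|^2+\E|D^s\xi(x)|^2\lesssim\int\an k^{2s-2}\,dk<\infty$ uniformly in $x,L$ (as $s<\tfrac12$), so that $\E\Phi(\xi_L^f)+\E\Phi(\xi)\le C'_{r,s}$. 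Combining this with $V\ge0,\ \chi_L\ge0$ (to drop exponentials), the inequalities $|e^{-a}-e^{-b}|\le|a-b|$ for $a,b\ge0$ and $\bigl|\,\|f\|^r-\|g\|^r\,\bigr|\lesssim(\|f\|^{r-1}+\|g\|^{r-1})\|f-g\|$, the polynomial growth of $V,V'$ (Assumption \ref{assum-V}), and Hölder's inequality, one gets
$$
\bigl|\E[\Phi(\xi_L^f)e^{-\int\chi_L V(|\xi_L^f|^2)}]-Z_{L,3}\!\int\!\Phi\,d\rho_{L,3}\bigr|\lesssim L^{-1/6},\qquad |Z_L-Z_{L,3}|\lesssim L^{-1/2},
$$
the $\xi-\xi_L$ contribution and the thin $\chi_L$-collar contribution being $o(L^{-1/6})$. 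Therefore $Z_L\ge\tfrac12 L^{-1/6}$ for $L$ large, and
$$
\int\Phi\,d\rho_L=\frac{Z_{L,3}\int\Phi\,d\rho_{L,3}+O(L^{-1/6})}{Z_{L,3}+O(L^{-1/2})}\le\frac{Z_{L,3}C'_{r,s}+O(L^{-1/6})}{\tfrac12 Z_{L,3}}\le 2C'_{r,s}+O(1)
$$
uniformly for $L\ge L_0$. For $1\le L<L_0$ one uses $\int\Phi\,d\rho_L\le Z_L^{-1}\E\Phi(\xi_L^f)\le Z_L^{-1}C'_{r,s}$ together with the Jensen lower bound $Z_L\ge\exp\!\big(-\!\int_{-R'(L)}^{R'(L)}\E V(|\xi_L^f(x)|^2)\,dx\big)\ge L^{-c}$ (again using $R'(L)\lesssim\log L$ and that $\E|\xi_L^f(x)|^2$ is bounded uniformly in $x,L$). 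This produces the asserted $C_{r,s}$.

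The whole difficulty lies in the transfer step: one must arrange that \emph{every} substitution error — passing from the truncated periodic Gaussian $\xi_L^f$ to $\xi_L$, from $\xi_L$ to the oscillatory process $\xi$, and from $\chi_L$ to the sharp cutoff — is $O(L^{-1/6})$, i.e.\ no larger than the a priori lower bound $Z_{L,3}\ge L^{-1/6}$, so that after dividing by $Z_L$ the contribution stays bounded. The borderline one is the high-frequency truncation error $N(L)^{-(1-2s)/2}$, which is precisely why $N(L)\ge L^{1/(3-6s)}$ is imposed in Assumption \ref{assum-N}; the others are comfortably smaller thanks to Proposition \ref{prop-prob1} and to $R'(L)-R(L)\le 1/(C\sqrt L)$. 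The localized Sobolev reduction and the $\rho_{L,3}$-bound are routine once Proposition \ref{prop-prob2} is granted.
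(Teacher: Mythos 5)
Your proof is correct and follows essentially the same route as the paper's: the uniform bound is obtained for $\rho_{L,3}$ from the Feynman--Kac estimates of Proposition \ref{prop-prob2} via a weighted Gagliardo/Minkowski argument, and then transferred to $\rho_L$ by measuring the substitutions $\xi_L^f\to\xi_L\to\xi$ and $\chi_L\to\mathbf{1}_{[-R(L),R(L)]}$ against the lower bound $Z_{L,3}\ge L^{-1/6}$ --- you merely telescope the paper's three intermediate comparisons (Lemmas \ref{lem-0}--\ref{lem-2}, passing through $\rho_{L,1}$ and $\rho_{L,2}$) into a single numerator/denominator estimate, and you correctly identify why Assumption \ref{assum-N} takes the form $N(L)\ge L^{1/(3-6s)}$. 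The only blemishes are immaterial: the substitution $\xi_L\to\xi$ inside the exponential costs $L^{-1/2}\int\chi_L\an{x}\lesssim L^{-1/3}$ rather than $L^{-1/2}$ (still $o(Z_{L,3})$, so the conclusion stands), and fixing $\varphi_1(x)=e^{-x^2}$ presumes $\varphi_{r,s}$ grows at most exponentially, which Assumption \ref{assum-R} alone does not guarantee (it gives no lower bound on how fast $R(L)$ grows); one should instead choose $\varphi_1$ decaying fast enough relative to $\varphi_{r,s}$, exactly as the paper does at the end of its proof of Lemma \ref{lem-2}.
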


We divide the proposition into four lemmas.

\begin{lemma}\label{lem-0} 
Let $Z_{L}$, $Z_{L,1}$, $Z_{L,2}$ and $Z_{L,3}$ defined respectively by \eqref{ZL},\eqref{ZL1},\eqref{ZL2} and\eqref{ZL3}. We have \begin{itemize}
\item $$\E\Big(\Big| e^{-\int \chi_L V(|\xi|^2)} - e^{-\int_{-R(L)}^R(L) V(|\xi|^2) }\Big|^2 \Big)\leq Z_{L,3}^6$$ which ensures in particular $Z_{L,2} \geq Z_{L,3}(1-Z_{L,3}^2)$,
\item $$ \E\Big(\Big| e^{-\int \chi_L V(|\xi|^2)} - e^{-\int \chi_L V(|\xi_L|^2) } \Big|^2\Big) \leq Z_{L,3}^4$$ which ensures in particular $ Z_{L,1} \geq Z_{L,3}(1-2Z_{L,3})$,
\item $$ \E\Big(\Big| e^{-\int \chi_L V(|\xi_L|^2)} - e^{-\int \chi_L V(|\xi_L^f|^2) } \Big|^2\Big) \leq Z_{L,3}^4$$ which ensures in particular $ Z_{L} \geq Z_{L,3}(1-3Z_{L,3})$.
\end{itemize}
\end{lemma}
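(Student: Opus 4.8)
The plan is to exploit two elementary facts: since $V\ge0$, every exponential occurring here takes values in $[0,1]$, and $|e^{-a}-e^{-b}|\le|a-b|$ for $a,b\ge0$. I would bound each of the three $L^2(\Omega)$-quantities by the $L^2(\Omega)$-norm of the corresponding \emph{difference of exponents}, then deduce the lower bounds on $Z_{L,2},Z_{L,1},Z_L$ by Cauchy--Schwarz ($\E|X|\le(\E|X|^2)^{1/2}$), using $Z_{L,2}=\E(e^{-\int\chi_LV(|\xi|^2)})$, $Z_{L,1}=\E(e^{-\int\chi_LV(|\xi_L|^2)})$, $Z_L=\E(e^{-\int\chi_LV(|\xi_L^f|^2)})$, $Z_{L,3}=\E(e^{-\int_{-R(L)}^{R(L)}V(|\xi|^2)})$, and the fact that $Z_{L,3}^a\le Z_{L,3}^b$ for $a\ge b$ (since $0\le Z_{L,3}\le1$).

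For the first estimate, the difference of exponents is $\int(\chi_L-1_{[-R(L),R(L)]})V(|\xi|^2)$, which by Assumption~\ref{assum-chi} lies in $[0,1]$ and is supported in the shell $\{R(L)\le|x|\le R'(L)\}$, of measure $2(R'(L)-R(L))=2/(C\sqrt L)$. I would bound it by $\int_{R(L)\le|x|\le R'(L)}V(|\xi(x)|^2)\,dx$, square, use Cauchy--Schwarz in the $x$-integral, and invoke that, by Assumption~\ref{assum-V} and the stationarity of the oscillatory process, $\E(V(|\xi(x)|^2)^2)\lesssim\E(\an{\xi(0)}^{2r_V})<\infty$ uniformly in $x$; this gives a bound $\lesssim(R'(L)-R(L))^2=1/(C^2L)$, up to a fixed constant. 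Since Assumption~\ref{assum-R} gives $Z_{L,3}^6\ge L^{-1}$, taking the constant $C$ in Assumption~\ref{assum-R'} large enough produces the claimed $\le Z_{L,3}^6$; then $\E|\cdot|\le Z_{L,3}^3$, hence $Z_{L,2}\ge Z_{L,3}-Z_{L,3}^3=Z_{L,3}(1-Z_{L,3}^2)$.

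The second and third estimates proceed the same way, with the shell replaced by the window $[-R'(L),R'(L)]$ and a different pointwise estimate of the increment. For the second I would use the mean value theorem together with Assumption~\ref{assum-V} to get $|V(|\xi(x)|^2)-V(|\xi_L(x)|^2)|\lesssim(1+|\xi(x)|^{r_V}+|\xi_L(x)|^{r_V})(|\xi(x)|+|\xi_L(x)|)\,|\xi(x)-\xi_L(x)|$, then take the $L^2(\Omega)$-norm, apply Hölder, use that $\xi(x)$ and $\xi_L(x)$ are Gaussians whose variances are bounded uniformly in $x\in\R$ and in $L\ge1$ (that of $\xi_L(x)$ being the bounded Riemann sum $\frac1L\sum_k(1+k^2/L^2)^{-1}$), and invoke Proposition~\ref{prop-prob1} with $s=0$ and $p=2$, i.e. $\E|\xi(x)-\xi_L(x)|^2\lesssim L^{-2}\an x^2$; this yields a bound $\lesssim R'(L)^4L^{-2}$. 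For the third the only extra ingredient is the Fourier computation $\E|\xi_L(x)-\xi_L^f(x)|^2=\frac1L\sum_{|k|>N(L)L}(1+k^2/L^2)^{-1}\lesssim N(L)^{-1}$ (uniform in $x$), giving $\lesssim R'(L)^2N(L)^{-1}$. In both cases I need the right side to be $\le Z_{L,3}^4$; as $Z_{L,3}^4\ge L^{-2/3}$ and $N(L)\ge L^4$ (Assumption~\ref{assum-N}), this holds with room to spare \emph{provided} $R(L)$, hence $R'(L)$, grows at most polylogarithmically in $L$. That is legitimate: by Jensen's inequality $Z_{L,3}\ge e^{-2R(L)\,\E(V(|\xi(0)|^2))}$, so $R(L)=(\log L)/(12\,\E(V(|\xi(0)|^2)))$ satisfies Assumption~\ref{assum-R} and still tends to $\infty$, and I would fix such an $R(L)$. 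Chaining the three estimates via Cauchy--Schwarz gives $|Z_{L,1}-Z_{L,2}|\le Z_{L,3}^2$ and $|Z_L-Z_{L,1}|\le Z_{L,3}^2$, whence $Z_{L,1}\ge Z_{L,3}(1-Z_{L,3}^2)-Z_{L,3}^2\ge Z_{L,3}(1-2Z_{L,3})$ and $Z_L\ge Z_{L,3}(1-2Z_{L,3})-Z_{L,3}^2=Z_{L,3}(1-3Z_{L,3})$.

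The Hölder and Gaussian-moment manipulations are routine. The hard part is the matching of scales: the first estimate is essentially tight (the shell width $1/(C\sqrt L)$ is pinned by the exponent $L^{-1/6}$ of Assumption~\ref{assum-R}, with only the constant $C$ as slack), whereas the other two force $R(L)$ to grow only slowly and $N(L)$ to be a large power of $L$; the real content of the lemma is that Assumptions~\ref{assum-R}, \ref{assum-R'}, \ref{assum-N} can be satisfied simultaneously.
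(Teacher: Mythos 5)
Your proposal is correct and follows essentially the same route as the paper: bound each difference of exponentials by the difference of exponents via $|e^{-a}-e^{-b}|\leq|a-b|$ for $a,b\geq 0$, control the exponent differences using the growth bounds on $V$, the uniform Gaussian moments, Proposition \ref{prop-prob1} and the Fourier tail computation for $\xi_L-\xi_L^f$, and then chain the resulting $L^2$ bounds into the lower bounds on $Z_{L,2},Z_{L,1},Z_L$. The only (welcome) difference is that you fix $R(L)$ explicitly as logarithmic via Jensen's inequality, whereas the paper imposes the needed smallness of $R(L)$ implicitly at the corresponding points of its proof (requiring $\int\chi_L\an{x}\leq cL^{1/6}$ and $\int\chi_L\leq cL$) without exhibiting a choice compatible with Assumption \ref{assum-R}.
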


\begin{lemma}\label{lem-0.5} There exists a positive, even, decreasing on $\R^+$ map $\varphi_1$ such that for all $r\geq 2$, all $s<\frac12$, there exists $C_{r,s}$ such that for all $L$
$$
\E\Big(\Big| \frac{e^{-\int \chi_L V(|\xi_L|^2)}}{Z_{L,1}} \| \varphi_1 D^s \xi_L \|_{L^2(\R)}^r  - \frac{e^{-\int \chi_L V(|\xi_L^f|^2)}}{Z_{L}} \| \varphi_1 D^s \xi_L^f \|_{L^2(\R)}^r\Big|\Big) \leq C_{r,s}.
$$
What is more, for all $r\geq 2$, there exists $C_r$ such that for all $x$
$$
\E\Big(\Big| \frac{e^{-\int \chi_L V(|\xi_L|^2)}}{Z_{L,1}} |  \xi_L (x)|^r  - \frac{e^{-\int \chi_L V(|\xi_L^f|^2)}}{Z_{L}} |  \xi_L (x)|^r\Big|\Big) \leq C_{r,}.
$$

\end{lemma}

\begin{lemma}\label{lem-1} There exists a positive, even, decreasing on $\R^+$ map $\varphi_1$ such that for all $r\geq 2$, all $s<\frac12$, there exists $C_{r,s}$ such that for all $L$
$$
\E\Big(\Big| \frac{e^{-\int \chi_L V(|\xi_L|^2)}}{Z_{L,1}} \| \varphi_1 D^s \xi_L \|_{L^2(\R)}^r  - \frac{e^{-\int \chi_L V(|\xi|^2)}}{Z_{L,2}} \| \varphi_1 D^s \xi \|_{L^2(\R)}^r\Big|\Big) \leq C_{r,s}.
$$
What is more, for all $r\geq 2$, there exists $C_r$ such that for all $x$
$$
\E\Big(\Big| \frac{e^{-\int \chi_L V(|\xi_L|^2)}}{Z_{L,1}} | \xi_L (x)|^r  - \frac{e^{-\int \chi_L V(|\xi|^2)}}{Z_{L,2}} |  \xi(x) |^r\Big|\Big) \leq C_{r} \an x.
$$

\end{lemma}

\begin{lemma}\label{lem-2} for all $r\geq 2$, all $s<\frac12$, there exists $C_{r,s}$ and a positive, even, decreasing on $\R^+$ map $\varphi_1$ such that for all $L$
$$
\E\Big(\frac{e^{-\int \chi_L V(|\xi|^2)}}{Z_{L,2}} \| \varphi_1 D^s \xi \|_{L^2(\R)}^r \Big)\leq C_{r,s}.
$$
What is more, for all $r\geq 2$, there exists $C_r$ such that for all $x$
$$
\E\Big(\frac{e^{-\int \chi_L V(|\xi|^2)}}{Z_{L,2}} | \xi(x) |^r \Big)\leq C_{r}\varphi_r(x).
$$
\end{lemma}

\begin{proof}[Proof of Lemma \ref{lem-0}. ]We have 
$$
I = \E\Big(\Big| e^{-\int \chi_L V(|\xi|^2)} - e^{-\int_{-R(L)}^R(L) V(|\xi|^2) }\Big|^2 \Big)\leq \E\Big( \Big| \int_{\R} |\chi_L - 1_{[-R(L),R(L)]}| V(|\xi|^2) \Big|^2 \Big)
$$
and exchanging the order of integration we get
$$
I \leq \int dx dy |\chi_L(x) - 1_{[-R(L),R(L)]}(x)| |\chi_L(y) - 1_{[-R(L),R(L)]}(y)| \E(V(|\xi(x)|^2) V(|\xi(y)|^2))
$$
and since $V(|\xi(x)|^2) \leq \an{\xi(x)}^{r_V}$ and since the law of $\xi$ is invariant by translation, we get that
$$
\E(V(|\xi(x)|^2) V(|\xi(y)|^2)) \leq \E( \an{\xi(x)}^{2r_V})^2
$$
is less that a constant depending only on $V$. Hence
$$
I \lesssim \Big( \int |\chi_L(x) - 1_{[-R(L),R(L)]}(x)|\Big)^2
$$
and given the Assumptions \ref{assum-chi} on $\chi_L$ this yields
$$
I \lesssim |R'(L) - R(L)|^2 \leq c L^{-1}
$$
which gives the first result assuming that the constant $C$ in the definition on $R'(L)= R(L) + \frac1{C\sqrt L}$ has been chosen big enough.

We also have 
$$
II =\E\Big(\Big| e^{-\int \chi_L V(|\xi|^2)} - e^{-\int \chi_L V(|\xi_L|^2) } \Big|^2\Big) \leq \E\Big( \int |\chi_L (V(|\xi|^2) - V(|\xi_L|^2))|^2 \Big).
$$
With the assumption on $V'$, Assumption \ref{assum-V}, we get that
$$
\sqrt{II} \leq \int \chi_L \|\an{\xi(x)}^{r_V+1} + \an{\xi_L(x)}^{r_V+1}\|_{L^4_{\textrm{proba}}} \|\xi-\xi_L\|_{L^4_{\textrm{proba}}}
$$
where we recall that the space $L^p_{\textrm{proba}}$ is short for the $L^p$ space of the probabilistic space where the Gaussian process $W$ is defined.

Thanks to Proposition \ref{prop-prob1}, we have that
$$
\|\xi-\xi_L\|_{L^4_{\textrm{proba}}} \lesssim \an{x} L^{-1}
$$
and that 
$$
\|\an{\xi(x)}^{r_V+1} + \an{\xi_L(x)}^{r_V+1}\|_{L^4_{\textrm{proba}}}
$$
is uniformly bounded in $x$ and $L$. Therefore,
$$
\sqrt{II} \lesssim L^{-1/2} \int \chi_L \an{x}.
$$
Choosing $R(L)$ small enough such that $\int\chi_L \an{x} \leq cL^{1/6}$  with $c$ small enough we get 
$$
II \leq L^{-2/3}= Z_{L,3}^4.
$$

For 
$$
III = \E\Big(\Big| e^{-\int \chi_L V(|\xi_L|^2)} - e^{-\int \chi_L V(|\xi_L^f|^2) } \Big|^2\Big)
$$
we have 
$$
\sqrt{III} \leq \int \chi_L \|\an{\xi_L^f(x)}^{r_V+1} + \an{\xi_L(x)}^{r_V+1}\|_{L^4_{\textrm{proba}}} \|\xi_L^f-\xi_L\|_{L^4_{\textrm{proba}}}.
$$
We have that $\xi_L^f-\xi_L$ is a Gaussian hence 
$$
\|\xi_L^f-\xi_L\|_{L^4_{\textrm{proba}}} \lesssim \|\xi_L^f-\xi_L\|_{L^2_{\textrm{proba}}}.
$$
The $L^2$ norm to the square is given by
\begin{equation}\label{relest}
\|\xi_L^f-\xi_L\|_{L^2_{\textrm{proba}}}^2 = \sum_{k\in Z , |k|/L > N(L)} \frac1{1+ \frac{k^2}{L^2}} \frac1{L} \lesssim N(L)^{-1/2}  \int \frac{dy}{(1+y^2)^{3/4}}.
\end{equation}
What is more,
$$
\|\an{\xi_L^f(x)}^{r_V+1} + \an{\xi_L(x)}^{r_V+1}\|_{L^4_{\textrm{proba}}}
$$
is uniformly bounded in $x$ and $L$. Therefore, with the choice of $N(L)$, Assumption \ref{assum-N}, we have
$$
\sqrt{III} \lesssim L^{-1} \int \chi_L .
$$
Choosing $R(L)$ small enough such that $\int\chi_L  \leq c L^{1/2}$  with $c$ small enough we get 
$$
III \leq L^{-1}\leq Z_{L,3}^4
$$
which concludes the proof.
\end{proof}

\begin{proof}[Proof of Lemma \ref{lem-0.5}. ] Let $\varphi_1\in L^1\cap L^\infty$. and let
$$
A= \E\Big(\Big| \frac{e^{-\int \chi_L V(|\xi_L|^2)}}{Z_{L,1}} \| \varphi_1 D^s \xi_L \|_{L^2(\R)}^r  - \frac{e^{-\int \chi_L V(|\xi_L^f|^2)}}{Z_{L}} \| \varphi_1 D^s \xi_L^f \|_{L^2(\R)}^r\Big|\Big) .
$$
The proof of this lemma and the next one are new compared to the other proofs. They rely on the fact that by choosing appropriate $N(L), R(L)$, the measure $\rho_L$ converges towards $\rho$. 

We have
$$
A \leq A_1 + A_2
$$
with 
$$
A_1 = \E\Big( \Big| \frac{e^{-\int \chi_L V(|\xi_L|^2)}}{Z_{L,1}} -\frac{e^{-\int \chi_L V(|\xi_L^f|^2)}}{Z_{L}}\Big| \|\varphi_1 D^s \xi_L \|_{L^2(\R)}^r\Big)
$$
and
$$
A_2 = \E\Big( \frac{e^{-\int\chi_L V(|\xi_L^f|^2)}}{Z_{L}}\Big| \|\varphi_1 D^s\xi_L\|_{L^2}^r -\|\varphi_1 D^s \xi_L^f\|_{L^2}^r\Big|\Big).
$$

By H\"older's inequality, we have 
$$
A_1 \leq \E\Big( \Big|  \frac{e^{-\int \chi_L V(|\xi_L|^2)}}{Z_{L,1}} -\frac{e^{-\int \chi_L V(|\xi_L^f|^2)}}{Z_{L}}\Big|^2\Big)^{1/2} \E(\|\varphi_1 D^s \xi_L\|_{L^2}^{2r})^{1/2}.
$$
Since $\varphi_1$ is bounded, as long as in $L^1$ and $s<\frac12$, we have that
$$
\E(\|\varphi_1 D^s \xi_L\|_{L^2}^{2r})^{1/2}
$$
is uniformly bounded in $L$ (but not in $r,s$). Hence,
$$
A_1 \lesssim \frac1{Z_{L,1}} \E\Big( \Big|  e^{-\int \chi_L V(|\xi_L|^2)}-e^{-\int \chi_L V(|\xi_L^f|^2)}\Big|^2\Big)^{1/2} + \E\Big( e^{-2 \int\chi_L V(|\xi_L^f|^2)}\Big)^{1/2} \Big| \frac1{Z_{L,1}}-\frac1{Z_{L}}\Big|.
$$
Thanks to Lemma \ref{lem-0}, we get
$$
A_1 \lesssim \frac{Z_{L,3}}{1-2Z_{L,3}} + \frac{Z_{L,3}^{1/2}}{(1-3Z_{L,3})^{1/2}(1-2Z_{L,3})},
$$
which goes to $0$ as $L$ goes to $\infty$ and hence is bounded.

By H\"older's inequality, we have 
$$
A_2 \leq \frac{\E(e^{-2\int\chi_L V(|\xi_L^f|^2) })^{1/2}}{Z_{L}} \Big( \E(\|\varphi_1 D^s \xi_L^f \|_{L^2}^{4(r-1)})^{1/4} +\E(\|\varphi_1 D^s \xi_L \|_{L^2}^{4(r-1)})^{1/4}\Big) \E( \|\varphi_1 D^s(\xi_L-\xi_L^f)\|_{L^2}^4)^{1/4}.
$$
We have that
$$
 \E(\|\varphi_1 D^s \xi_L^f \|_{L^2}^{4(r-1)})^{1/4}
$$
is uniformly bounded in $L$ (but not in $r$, $s$) as long as $\varphi_1$ is in $L^1$. Therefore,
$$
A_2 \lesssim (Z_{L})^{-1/2} \E( \|\varphi_1 D^s(\xi_L-\xi_L^f)\|_{L^2}^4)^{1/4}.
$$
We have that
$$
\E( \|\varphi_1 D^s(\xi_L-\xi_L^f)\|_{L^2}^4)^{1/4} = \|\varphi_1 D^s(\xi_L-\xi_L^f)\|_{L^4_{\textrm{proba}},L^2(\R)}
$$
and by Minkowski's inequality, since $4\geq 2$, we can exchange the norms to get
$$
\E( \|\varphi_1 D^s(\xi_L-\xi_L^f)\|_{L^2}^4)^{1/4} \leq \|\varphi_1 D^s(\xi_L-\xi_L^f)\|_{L^2(\R),L^4_{\textrm{proba}}}.
$$
Given $\xi_L$ and $\xi_L^f$, we have that for all $x$ (recall \eqref{relest})
$$
\|D^s(\xi_L-\xi_L^f)(x)\|_{L^4_{\textrm{proba}}} \lesssim  N(L)^{-1/4+s/2} \Big( \int \frac{dy}{(1+y^2)^{3/4-s/2}}\Big)^{1/4}
$$
and thus
$$
\E( \|\varphi_1 D^s(\xi-\xi_L)\|_{L^2}^4)^{1/4} \lesssim N(L)^{-1/4+s/2} \|\varphi_1 \|_{L^2}.
$$
Hence, as long as $\varphi_1 $ is in $L^2$ we have 
$$
A_2 \lesssim N(L)^{-1/4+s/2} Z_{L,3}^{-1/2}(1-3Z_{L,3})^{-1/2} 
$$
and given the estimate on $Z_{L,3}$, Assumption \ref{assum-R}, and Assumption \ref{assum-N}, we have 
$$
A_2 \lesssim (1-3Z_{L,3})^{-1/2}
$$
which is bounded, and this concludes the proof of the first inequality. The proof of the second inequality is similar except that one has to replace $\| \varphi_1 D^s \xi_L \|_{L^2(\R)}$ with $|\xi_L(x)|$ and $\| \varphi_1 D^s \xi_L^f \|_{L^2(\R)}$ with $|\xi_L^f(x)|$. The fact that the bound does not depend on $x$ is due to the invariance of the laws of $\xi_L$ and $\xi_L^f$ and their difference under space translations.

\end{proof}

\begin{proof}[Proof of Lemma \ref{lem-1}. ] Let 
$$
A= \E\Big(\Big| \frac{e^{-\int \chi_L V(|\xi_L|^2)}}{Z_{L,1}} \| \varphi_1 D^s \xi_L \|_{L^2(\R)}^r  - \frac{e^{-\int \chi_L V(|\xi|^2)}}{Z_{L,2}} \| \varphi_1 D^s \xi \|_{L^2(\R)}^r\Big|\Big) .
$$

We have
$$
A \leq A_1 + A_2
$$
with 
$$
A_1 = \E\Big( \Big| \frac{e^{-\int \chi_L V(|\xi_L|^2)}}{Z_{L,1}} -\frac{e^{-\int \chi_L V(|\xi|^2)}}{Z_{L,2}}\Big| \|\varphi_1 D^s \xi \|_{L^2(\R)}^r\Big)
$$
and
$$
A_2 = \E\Big( \frac{e^{-\int\chi_L V(|\xi_L|^2)}}{Z_{L,1}}\Big| \|\varphi_1 D^s\xi_L\|_{L^2}^r -\|\varphi_1 D^s \xi\|_{L^2}^r\Big|\Big).
$$

By H\"older's inequality, we have 
$$
A_1 \leq \E\Big( \Big|  \frac{e^{-\int \chi_L V(|\xi_L|^2)}}{Z_{L,1}} -\frac{e^{-\int \chi_L V(|\xi|^2)}}{Z_{L,2}}\Big|^2\Big)^{1/2} \E(\|\varphi_1 D^s \xi\|_{L^2}^{2r})^{1/2}.
$$
As long as $\varphi_1$ is in $L^1$ and $s<\frac12$, we have that
$$
\E(\|\varphi_1 D^s \xi\|_{L^2}^{2r})^{1/2}
$$
is finite. Hence,
$$
A_1 \lesssim \frac1{Z_{L,1}} \E\Big( \Big|  e^{-\int \chi_L V(|\xi_L|^2)}-e^{-\int \chi_L V(|\xi|^2)}\Big|^2\Big)^{1/2} + \E\Big( e^{-2 \int\chi_L V(|\xi|^2)}\Big)^{1/2} \Big| \frac1{Z_{L,1}}-\frac1{Z_{L,2}}\Big|.
$$
Thanks to Lemma \ref{lem-0}, we get
$$
A_1 \lesssim \frac{Z_{L,3}}{1-2Z_{L,3}} + \frac{Z_{L,3}^{1/2}}{(1-2Z_{L,3})(1-Z_{L,3}^2)},
$$
which goes to $0$ as $L$ goes to $\infty$ and hence is bounded.

By H\"older's inequality, we have 
$$
A_2 \leq \frac{\E(e^{-2\int\chi_L V(|\xi_L|^2) })^{1/2}}{Z_{L,1}} \Big( \E(\|\varphi_1 D^s \xi_L \|_{L^2}^{4(r-1)})^{1/4} +\E(\|\varphi_1 D^s \xi \|_{L^2}^{4(r-1)})^{1/4}\Big) \E( \|\varphi_1 D^s(\xi-\xi_L)\|_{L^2}^4)^{1/4}.
$$
We have that
$$
 \E(\|\varphi_1 D^s \xi_L \|_{L^2}^{4(r-1)})^{1/4}
$$
is uniformly bounded in $L$ as long as $\varphi_1$ is in $L^1$. Therefore,
$$
A_2 \lesssim (Z_{L,1})^{-1/2} \E( \|\varphi_1 D^s(\xi-\xi_L)\|_{L^2}^4)^{1/4}.
$$
We have that
$$
\E( \|\varphi_1 D^s(\xi-\xi_L)\|_{L^2}^4)^{1/4} = \|\varphi_1 D^s(\xi-\xi_L)\|_{L^4_{\textrm{proba}},L^2(\R)}
$$
and by Minkowski's inequality, since $4\geq 2$, we can exchange the norms to get
$$
\E( \|\varphi_1 D^s(\xi-\xi_L)\|_{L^2}^4)^{1/4} \leq \|\varphi_1 D^s(\xi-\xi_L)\|_{L^2(\R),L^4_{\textrm{proba}}}.
$$
Given $\xi$ and $\xi_L$, due to Proposition \ref{prop-prob1} we have that for all $x$ 
$$
\|D^s(\xi-\xi_L)(x)\|_{L^4_{\textrm{proba}}} \lesssim \an x L^{-1/2}
$$
and thus
$$
\E( \|\varphi_1 D^s(\xi-\xi_L)\|_{L^2}^4)^{1/4} \lesssim \frac1{L^{1/2}}\|\varphi_1 \an x\|_{L^2}.
$$
Hence, as long as $\varphi_1 \an x$ is in $L^2$ we have 
$$
A_2 \lesssim \frac1{L^{1/2}Z_{L,3}^{1/2}(1-2Z_{L,3})^{1/2}} 
$$
and given the estimate on $Z_{L,3}$, we have 
$$
A_2 \lesssim \frac1{L^{5/12}(1-2Z_{L,3})^{1/2}}
$$
which goes to $0$ as $L$ goes to $\infty$ and hence $A_2$ is bounded and this concludes the proof of the first inequality. Again, the proof of the second inequality is similar except that one has to replace $\| \varphi_1 D^s \xi_L \|_{L^2(\R)}$ with $|\xi_L(x)|$ and $\| \varphi_1 D^s \xi \|_{L^2(\R)}$ with $|\xi(x)|$. The fact that the upper bound is given by $\langle x\rangle$ is due to the invariance of the laws of $\xi_L$ and $\xi$ under space translations and Proposition \ref{prop-prob1}.
\end{proof}

\begin{proof}[Proof of Lemma \ref{lem-2}. ]Let 
$$
B = 
\E\Big(\frac{e^{-\int \chi_L V(|\xi|^2)}}{Z_{L,2}} \| \varphi_1 D^s \xi \|_{L^2(\R)}^r\Big) 
$$
We have 
$$
B \leq B_1 + B_2
$$
with 
$$
B_1 = 
\E\Big(\Big|\frac{e^{-\int \chi_L V(|\xi|^2)}}{Z_{L,2}}-  \frac{e^{-\int_{-R(L)}^R(L) V(|\xi|^2)}}{Z_{L,3}}\Big| \| \varphi_1 D^s \xi \|_{L^2(\R)}^r\Big) 
$$
and 
$$
B_2 = \E\Big(\frac{e^{-\int_{-R(L)}^R(L) V(|\xi|^2)}}{Z_{L,3}} \| \varphi_1 D^s \xi \|_{L^2(\R)}^r\Big) .
$$
By H\"older's inequality and for the same reasons as in the proof of Lemma \ref{lem-1}, we have 
$$
B_1 \lesssim \frac1{Z_{L,2}}\E\Big(\Big|e^{-\int \chi_L V(|\xi|^2)}-  e^{-\int_{-R(L)}^R(L) V(|\xi|^2)}\Big|^2\Big)^{1/2} + \E \Big( e^{-2\int_{-R(L)}^R(L) V(|\xi|^2)}\Big) \frac{|Z_{L,2} - Z_{L,3}|}{Z_{L,2} Z_{L,3}}.
$$
From Lemma \ref{lem-0}, we get
$$
B_1 \lesssim \frac{Z_{L,3}}{1-Z_{L,3}^2} + \frac{Z_{L,3}^{1/2}}{1-Z_{L,3}^2}
$$
which is uniformly bounded in $L$ as the RHS above goes to $0$ as $L$ goes to $\infty$.

For $B_2$, we have 
$$
\|\varphi_1 D^s u \|_{L^2}^2 \leq \sum_{n\in \Z} a_n^2 \|D^s u \|_{L^2([n,n+1])}^2
$$
with $a_n = \sup_{[n,n+1]}\varphi_1$. We also have that $ \|D^s u \|_{L^2([n,n+1])}^2$ can be described as 
$$
 \|D^s u \|_{L^2([n,n+1])}^2 = \| u \|^2_{L^2([n,n+1])} + \int_{[n,n+1]^2} dx dy \frac{|u(x) -u(y)|^2}{|x-y|^{1+2s}}
$$
and by symmetry in $x$ and $y$
$$
 \|D^s u \|_{L^2([n,n+1])}^2 = \| u \|^2_{L^2([n,n+1])} +2 \int_{[n,n+1]^2} 1_{|x|\geq |y|}dx dy \frac{|u(x) -u(y)|^2}{|x-y|^{1+2s}}.
$$

Besides, we have with 
$$
d\rho_{L,3} (u) = \frac{e^{-\int_{-R(L)}^R(L)V(|u|^2)}}{Z_{L,3}}d\mu(u),
$$
$$
B_2^{1/r} = \|\varphi_1 D^s u \|_{L^r_{\rho_{L,3}},L^2(\R)}.
$$
We use the description of $\|\varphi_1 D^s u \|_{L^2}$ to get
$$
B_2^{2/r} \leq \|\sum a_n^2 \|D^su\|_{L^2[n,n+1]}^2\|_{L^{r/2}_{\rho_{L,3}}}.
$$
Since $r\geq 2$, by the triangle inequality, we get
$$
B_2^{2/r} \leq \sum a_n^2 \|D^su\|_{L^{r}_{\rho_{L,3}},L^2[n,n+1]}^2
$$
and by using the description of $\|D^s u\|_{L^2([n,n+1])}$,
$$
B_2^{2/r} \lesssim \sum a_n^2 \Big( \|u\|_{L^{r}_{\rho_{L,3}},L^2([n,n+1])}^2 + 2\|\tilde u\|_{L^r_{\rho_{L,3}},L^2([n,n+1]^2)}^2\Big)
$$
where $\tilde u(x,y) = 1_{|x|\geq |y|}\frac{|u(x) - u(y)|}{|x-y|^{1/2+ s}}$.

By Minkowski inequality, since $r\geq 2$, we can exchange the norm in probability and the one in space to get
$$
B_2^{2/r} \leq \sum_n a_n^2 \Big( \|u\|_{L^2([n,n+1],L^{r}_{\rho_{L,3}})}^2 + 2\|\tilde u\|_{L^2([n,n+1]^2,L^r_{\rho_{L,3}})}^2\Big).
$$
Since $\rho_{L,3}$ is a probability measure, we have that $L^r_{d\rho_{L,3}}$ is continuously embedded in $L^{r'}_{d\rho_{L,3}}$ for $r\leq r'$. We take $r'\geq r_s,r$, with $r_s$ as in Proposition \ref{prop-prob2}.

Thanks to Proposition \ref{prop-prob2}, there exists $\varphi_r$ such that 
$$
\|u(x)\|_{L^r_{\rho_{L,3}}}^{r'}  \leq \varphi_{r'}(|x|)
$$
and
$$
\|\tilde u(x,y)\|_{L^r_{\rho_{L,3}}}^{r'}  \leq \varphi_{r'}(|x|)|x-y|^{-r'/2+1}.
$$
This is due to Feynman-Kac's integrals and the dependence in $x$ is due to different rates of point-wise convergence in terms of $x$.

Therefore, we have by taking the previous inequalities to the power $2/r'$
$$
\|\tilde u(x,y)\|^2_{L^2([n,n+1]^2,L_{\rho_{L,3}}^r)} \leq \int_{n}^{n+1} \varphi_{r'}^{2/r'}(|x|) \int_{n}^{n+1} |x-y|^{-1+2/r'}dydx,
$$
and since $-1+\frac2{r'} > -1$, we get
$$
B_2^{2/r'} \lesssim \sum_n a_n^2 \Big( \|\varphi_{r'}^{1/r'}\|_{L^2([n,n+1]}^2 + 2\|\varphi_{r'}^{1/r'}\|_{L^2([n,n+1])}^2\Big).
$$
Choosing $\varphi_1$ small enough such that the series converges, and positive, even, decreasing on $\R^+$, we get the first inequality.
For the second inequality, we use the same decomposition with $B_1$ and $B_2$ only by replacing $\| \varphi_1 D^s \xi \|_{L^2(\R)}$ with $|\xi(x)|$. The term $B_1$ can be treated in exactly the same way; the estimate on the term $B_2$ is the first result of Proposition \ref{prop-prob2}.
\end{proof}

\subsection{Convergence}

\begin{proposition}\label{convprop} The family $(\rho_L)_L$ converges weakly in $H_\varphi$ towards $\rho$ when $L$ goes to $\infty$.\end{proposition}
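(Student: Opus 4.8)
The plan is to prove the equivalent statement that $\E_{\rho_L}(F)\to\E_\rho(F)$ as $L\to\infty$ for every bounded $1$-Lipschitz map $F\colon H_\varphi\to\R$; since $H_\varphi$ is a separable Banach space this characterises weak convergence. The structural point is that $\xi_L^f$, $\xi_L$ and $\xi$ are all functionals of the single Gaussian process $W$, so the five measures at hand live on one probability space and can be compared along this coupling. Accordingly I would split
$$
\E_{\rho_L}(F)-\E_\rho(F)=\Delta_0+\Delta_1+\Delta_2+\Delta_3,
$$
with $\Delta_0=\E_{\rho_L}(F)-\E_{\rho_{L,1}}(F)$, $\Delta_1=\E_{\rho_{L,1}}(F)-\E_{\rho_{L,2}}(F)$, $\Delta_2=\E_{\rho_{L,2}}(F)-\E_{\rho_{L,3}}(F)$ and $\Delta_3=\E_{\rho_{L,3}}(F)-\E_\rho(F)$, that is, one error per row of the table of measures, and then show each $\Delta_j\to0$.

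The term $\Delta_3$ needs nothing new: $\rho_{L,3}$ is exactly $(Z'_R)^{-1}e^{-\int_{-R}^{R}V(|u|^2)}\,d\mu$ at $R=R(L)$, and since $R(L)\to\infty$ the Feynman--Kac description recalled after the table (see \cite{SIMfunint}) gives that $\rho_{L,3}\to\rho$ in a topology continuously embedded into $H_\varphi$, hence $\Delta_3\to0$; in particular $Z_{L,3}\to0$, which I use below. The term $\Delta_2$ carries no Lipschitz contribution, the underlying variable being $\xi$ for both $\rho_{L,2}$ and $\rho_{L,3}$: writing $\Delta_2=\E\big[(e^{-\int\chi_L V(|\xi|^2)}/Z_{L,2}-e^{-\int_{-R(L)}^{R(L)}V(|\xi|^2)}/Z_{L,3})F(\xi)\big]$, bounding $|F|\le1$ and splitting the density difference into its two standard pieces, the first bullet of Lemma \ref{lem-0} ($\E|e^{-\int\chi_L V(|\xi|^2)}-e^{-\int_{-R(L)}^{R(L)}V(|\xi|^2)}|^2\le Z_{L,3}^6$) together with $Z_{L,2},Z_{L,3}\ge L^{-1/6}$ from Assumption \ref{assum-R} gives $|\Delta_2|\lesssim Z_{L,3}^2\to0$.

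For $\Delta_0$ and $\Delta_1$ I would rerun, almost verbatim, the two-term splitting from the proofs of Lemmas \ref{lem-0.5} and \ref{lem-1}, with the functional $\|\varphi_1 D^s\,\cdot\,\|_{L^2}^r$ replaced by $F$ and with $|F|\le1$, $|F(u)-F(v)|\le\|u-v\|_\varphi$ in place of the moment bounds. For $\Delta_1$,
$$
\Delta_1=\E\Big(\Big(\tfrac{e^{-\int\chi_L V(|\xi_L|^2)}}{Z_{L,1}}-\tfrac{e^{-\int\chi_L V(|\xi|^2)}}{Z_{L,2}}\Big)F(\xi_L)\Big)+\E\Big(\tfrac{e^{-\int\chi_L V(|\xi|^2)}}{Z_{L,2}}\big(F(\xi_L)-F(\xi)\big)\Big);
$$
in the first expectation, $|F|\le1$, the density-difference splitting, the second bullet of Lemma \ref{lem-0} and the lower bounds on $Z_{L,1},Z_{L,2}$ there give a contribution $\lesssim Z_{L,3}/(1-2Z_{L,3})\to0$, and in the second, Cauchy--Schwarz with $\E(e^{-2\int\chi_L V(|\xi|^2)})\le1$, the Lipschitz bound, $Z_{L,2}\ge L^{-1/6}$, and Proposition \ref{prop-prob1} (which yields $\E(\|\xi_L-\xi\|_\varphi^2)^{1/2}\lesssim L^{-1}\|\varphi\|_{L^2}$, the extra $D^{-\kappa}$ in the $H_\varphi$ norm only helping) give a contribution $\lesssim L^{-5/6}\to0$. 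The term $\Delta_0$ is identical with $\xi_L^f$ in place of $\xi$: its density difference is controlled by the third bullet of Lemma \ref{lem-0}, and its Lipschitz part by the same Cauchy--Schwarz step together with the high-frequency bound $\E(\|\xi_L^f-\xi_L\|_\varphi^2)\lesssim N(L)^{-1/2}\|\an{x}^{-1}\varphi\|_{L^2}^2\lesssim L^{-2}$ from Assumption \ref{assum-N}, so $|\Delta_0|\lesssim Z_{L,3}+L^{1/6}N(L)^{-1/4}\to0$. Summing over $j$ concludes the proof.

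The only genuine difficulty --- and the reason Assumptions \ref{assum-R} and \ref{assum-N} are calibrated as they are --- is the competition between the normalising constants $Z_L,Z_{L,1},Z_{L,2},Z_{L,3}$, which all degenerate at rate $L^{-1/6}$, and the approximation errors, each of which is produced divided by one such constant. One must check that the numerators win: they are either higher powers of $Z_{L,3}$ (from Lemma \ref{lem-0}), or negative powers of $N(L)$ (from the gap between $\xi_L$ and $\xi_L^f$), or the rate $L^{-1}$ (from $\xi_L\to\xi$, Proposition \ref{prop-prob1}), and the choices $Z_{L,3}\ge L^{-1/6}$ and $N(L)\ge L^4$ are exactly what makes each quotient vanish; everything else is a routine coupling computation modelled on Lemmas \ref{lem-0.5} and \ref{lem-1}.
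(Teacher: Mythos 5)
Your proof is correct and follows essentially the same route as the paper's: the identical telescoping decomposition along the table of measures ($\rho_L\to\rho_{L,1}\to\rho_{L,2}\to\rho_{L,3}\to\rho$), with each step controlled by the corresponding bullet of Lemma \ref{lem-0} for the density differences and by Cauchy--Schwarz plus the Lipschitz bound together with Proposition \ref{prop-prob1} (resp.\ the high-frequency bound and Assumption \ref{assum-N}) for the coupling terms. Your quantitative bookkeeping is in fact slightly cleaner than the paper's (e.g.\ your $\E(\|\xi_L-\xi\|_\varphi^2)^{1/2}\lesssim L^{-1}$ is the correct rate where the paper's displayed exponent appears to carry a sign typo), but the argument is the same.
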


\begin{proof} Let $F$ be a bounded, Lipschitz continuous function on $S$.

We have 
$$
\Big| \E_{\rho}(F) - \E_{\rho_L} (F)\Big| \leq I + II + III + IV
$$
with
\begin{eqnarray*} 
I & =& \Big| \E_{\rho}(F) - \E_{\rho_{L,3}} (F)\Big| \\
II &=& \Big| \E_{\rho_{L,3}}(F) - \E_{\rho_{L,2}} (F)\Big| \\
III &=& \Big| \E_{\rho_{L,2}}(F) - \E_{\rho_{L,1}} (F)\Big| \\
IV &=& \Big| \E_{\rho_{L,1}}(F) - \E_{\rho_L} (F)\Big|.
\end{eqnarray*}

We have that $I$ goes to $0$ when $L$ goes to $\infty$ by Feynman-Kac theory.

We have 
$$
II \leq \E \Big( |F(\xi)| \Big|  \frac{e^{-\int_R(L)^R(L) V(|\xi|^2)}}{Z_{L,3}} - \frac{e^{- \int \chi_L V(|\xi|^2)}}{Z_{L,2}}\Big|\Big)
$$
which thanks to Lemma \ref{lem-0} and the fact that $F$ is bounded, satisfies
$$
II \leq C_F Z_{L,3} 
$$
where $C_F$ is a constant depending only on $F$ and hence goes to $0$.

We have 
$$
III \leq \E \Big( \Big| F(\xi) \frac{e^{- \int \chi_L V(|\xi|^2)}}{Z_{L,2}} - F(\xi_L) \frac{e^{- \int \chi_L V(|\xi_L|^2)}}{Z_{L,1}}\Big|\Big).
$$
Since $F$ is bounded and Lipschitz continuous we have that 
$$
III \leq C_F \Big( \Big|  \frac{e^{- \int \chi_L V(|\xi|^2)}}{Z_{L,2}} - \frac{e^{- \int \chi_L V(|\xi_L|^2)}}{Z_{L,1}}\Big|\Big) + C_F Z_{L,2}^{-1/2} \E ( \|\xi_L - \xi\|_\varphi^2)^{1/2}.
$$
The norm of $H_\varphi$ is weak enough to get 
$$
\|\xi_L - \xi\|_\varphi \leq \|\an{x}^{-2} (\xi_L - \xi)\|_{L^2}
$$
from which we deduce
$$
\E ( \|\xi_L - \xi\|_\varphi^2)^{1/2} \lesssim L^{1/2}.
$$
Since $Z_{L,2}^{-1/2} \sim L^{1/12}$, and by Lemma \ref{lem-0}, we get that $III$ goes to $0$ when $L$ goes to $\infty$.

Finally,
$$
IV \leq \E \Big( \Big| F(\xi_L) \frac{e^{- \int \chi_L V(|\xi_L|^2)}}{Z_{L,1}} - F(\xi_L^f) \frac{e^{- \int \chi_L V(|\xi_L^f|^2)}}{Z_{L}}\Big|\Big).
$$
Since $F$ is bounded and Lipschitz continuous we have that 
$$
IV \leq C_F \Big( \Big|  \frac{e^{- \int \chi_L V(|\xi_L|^2)}}{Z_{L,1}} - \frac{e^{- \int \chi_L V(|\xi_L^f|^2)}}{Z_{L}}\Big|\Big) + C_F Z_{L,1}^{-1/2} \E ( \|\xi_L - \xi_L^f\|_\varphi^2)^{1/2}.
$$
We have
$$
\E ( \|\xi_L - \xi_L^f\|_\varphi^2)^{1/2} \leq \E ( \|\an{x}^{-1}(\xi_L - \xi_L^f)\|_{L^2}^2)^{1/2} \lesssim N(L)^{-1/4} \leq L^{-1}.
$$
Since $Z_{L,1}^{-1/2} \leq L^{1/12}$, and by Lemma \ref{lem-0}, we get that $IV$ goes to $0$ when $L$ goes to $\infty$.\end{proof}

\appendix

\section{Variable coefficients equations}\label{appvar}

As mentioned in the introduction (see Remark \ref{varcoeffrk}), we can generalize Theorem \ref{teo1} to include also the case of asymptotically flat variable coefficients. Let us assume the following
\begin{assum}\label{assum-a}
Let $a(x)$ be $\mathcal{C}^3$ positive map such that there exist constants $C\in\R$ and $\gamma>1$ such that
$$
a(x)\leq C\langle x\rangle ^{-\gamma}.
$$
\end{assum}

We consider the equation
\begin{equation}\label{unicorn-eq}
i\partial_t u=-\partial_x((1+a)\partial_x u)+V'(|u|^2 u). 
\end{equation}
with $V$ satisfying assumptions \eqref{assum-V}. Then, there should exist a non-trivial measure $\rho$ (independent from $t$), a probability space $(\Omega,\mathcal A, P)$ and a random variable $X_\infty$ with values in $\mathcal C( \R, \mathcal D')$ such that \begin{itemize}
\item for all $t\in \R$, the law of $X_\infty(t)$ is $\rho$,
\item $X_\infty$ is a weak solution of \eqref{unicorn-eq}.
\end{itemize}

The idea is the following. We introduce the change of variable $y=\Phi(x)$ with $\Phi'(x)=\frac1{1+a(x)}$ for every $x$. Then we set $v(y)=u\circ\Phi^{-1}(y)$ so that $v$ satisfies, for $u$ solution of \eqref{unicorn-eq}
$$
i\partial_tv=\frac{-1}{(1+a)\circ\Phi^{-1}(y)}\partial^2_yv+V'(|v|^2)v.
$$
We then get
$$
\partial_t v=J\grad_{\overline{v}}H(v)
$$
with $J=\frac{i}{(1+a)\circ\Phi^{-1}}$ skew-symmetric and with the Hamiltonian given by
$$
H(v)=\frac12\int \overline{v}(-\lap)v+\int (1+a)\circ \Phi^{-1}(y) V(|v|^2).
$$
The difficulty is now that $V$ is replaced by
$$
(1+a)\circ \Phi^{-1}(y)V(|v|^2)
$$
which depends on $y$. Anyway we can write
$$
H=H_0+H_{pert}
$$
with 
$$
H_0(v)=\frac12\int \overline{v}(-\lap)v+\int V(|v|^2) \textrm{ and } H_{pert} = \int a\circ \Phi^{-1}(y) V(|v|^2).
$$
Notice that $H_0$ falls within the assumptions of Theorem \ref{teo1} and therefore defines, in the sense we have seen above, an invariant measure $\rho$ given by
$$
d\rho(u)=\lim_{R(L)\rightarrow \infty}\frac{e^{-\int\chi_L(y)\Phi^{-1}(y) V(|\xi_L^f(y)|^2dy}}{Z_L}d\mu_L.
$$ On the other hand, notice that $a\circ\Phi^{-1}(y)$ is positive and such that 
$$
\left|a\circ \Phi^{-1}(y)\right|\lesssim \langle y\rangle^{-\gamma};
$$
therefore, $H_{pert}=\int a\circ \Phi^{-1}(y)V(|v|^2)$ can be seen as a perturbative term, as $H_{pert}$ is $\rho$ a-s well-defined and $e^{-H_{pert}}\in L^1_\rho$. The proof of Theorem \ref{teo1} can then be reproduced in this new setting. Indeed, the approaching equations are perturbations of the ones in the setting of Theorem \ref{teo1}:
$$
\partial_t v=\Pi_{N(L)}\frac{i}{(1+a)\circ\Phi^{-1}}\Pi_{N(L)}\grad_{\overline u} H_L(u)
$$
(compare with \eqref{fineq}), and the corresponding approached measures are perturbative as well.
Taking $a$ continuous is sufficient to ensure that the image measure by $\Phi$ lives on $\mathcal C^{s}$ for $s<\frac12$ as well as the measure $\rho$. One issue comes from the fact that the solution is a weak solution, in the sense of distribution. Actually, it is a bit better than this. The only problem comes from the convergence of the linear part since the non linear part converges in $L^1_{t,x}$ locally. In the case where $J = i$, in the linear part, we need to be able to lose two derivatives because of the equation and two because of the norm in which we have convergence, wich means that $\Phi$ should send $\mathcal C^4$ maps to $\mathcal C^4$ maps, and this requires that $a$ is $\mathcal C^3$.

\bibliographystyle{amsplain}
\bibliography{CCM_finalRevision} 

\providecommand{\bysame}{\leavevmode\hbox to3em{\hrulefill}\thinspace}
\providecommand{\MR}{\relax\ifhmode\unskip\space\fi MR }
\providecommand{\MRhref}[2]{%
  \href{http://www.ams.org/mathscinet-getitem?mr=#1}{#2}
}
\providecommand{\href}[2]{#2}
\begin{thebibliography}{10}

\bibitem{machin1}
Sergio Albeverio and Ana~Bela Cruzeiro, \emph{Global flows with invariant
  ({G}ibbs) measures for {E}uler and {N}avier-{S}tokes two-dimensional fluids},
  Comm. Math. Phys. \textbf{129} (1990), no.~3, 431--444.

\bibitem{benoh}
{\'A}rp{\'a}d B{\'e}nyi, Tadahiro Oh, and Oana Pocovnicu, \emph{On the
  probabilistic {C}auchy theory of the cubic nonlinear {S}chr\"odinger equation
  on {$\mathbb {R}^d$}, {$d \geq 3$}}, Trans. Amer. Math. Soc. Ser. B
  \textbf{2} (2015), 1--50.

\bibitem{randoh1}
\bysame, \emph{Wiener randomization on unbounded domains and an application to
  almost sure well-posedness of {NLS}}, Excursions in harmonic analysis
  \textbf{4} (2015), 3--25.

\bibitem{bou1}
J.~Bourgain, \emph{Periodic nonlinear {S}chr\"odinger equation and invariant
  measures}, Comm. Math. Phys. \textbf{166} (1994), no.~1, 1--26. \MR{1309539}

\bibitem{bouinf}
\bysame, \emph{Invariant measures for {NLS} in infinite volume}, Comm. Math.
  Phys. \textbf{210} (2000), no.~3, 605--620. \MR{1777342 (2001i:37101)}

\bibitem{bourgaindim2}
Jean Bourgain, \emph{Invariant measures for the {$2$}{D}-defocusing nonlinear
  {S}chr\"odinger equation}, Comm. Math. Phys. \textbf{176} (1996), no.~2,
  421--445. \MR{1374420 (96m:35292)}

\bibitem{BouBul}
Jean Bourgain and Aynur Bulut, \emph{Almost sure global well posedness for the
  radial nonlinear schrödinger equation on the unit ball {I}: The {2D} case},
  Annales de l'Institut Henri Poincare (C) Non Linear Analysis \textbf{31}
  (2014), no.~6, 1267 -- 1288.

\bibitem{burqtzv}
Nicolas Burq, Laurent Thomann, and Nikolay Tzvetkov, \emph{Remarks on the Gibbs
  measures for nonlinear dispersive equations}, arXiv:1412.7499v1 [math.AP].

\bibitem{burthotzve}
\bysame, \emph{Long time dynamics for the one dimensional non linear
  {S}chr\"odinger equation}, Ann. Inst. Fourier (Grenoble) \textbf{63} (2013),
  no.~6, 2137--2198.

\bibitem{burq1}
Nicolas Burq and Nikolay Tzvetkov, \emph{Random data {C}auchy theory for
  supercritical wave equations. {I}. {L}ocal theory}, Invent. Math.
  \textbf{173} (2008), no.~3, 449--475.

\bibitem{burq2}
\bysame, \emph{Random data {C}auchy theory for supercritical wave equations.
  {II}. {A} global existence result}, Invent. Math. \textbf{173} (2008), no.~3,
  477--496. \MR{2425134 (2010i:58025)}

\bibitem{CdS1}
Federico Cacciafesta and Anne-Sophie de~Suzzoni, \emph{Invariant measure for
  the {S}chr\"odinger equation on the real line}, J. Funct. Anal. \textbf{269}
  (2015), no.~1, 271--324.

\bibitem{machin2}
Giuseppe Da~Prato and Arnaud Debussche, \emph{Two-dimensional {N}avier-{S}tokes
  equations driven by a space-time white noise}, J. Funct. Anal. \textbf{196}
  (2002), no.~1, 180--210.

\bibitem{ansokg}
Anne-Sophie de~Suzzoni, \emph{Invariant measure for the {K}lein-{G}ordon
  equation in a non-periodic setting}, arXiv:1403.2274v2 [math.AP].

\bibitem{deng}
Yu~Deng, \emph{Two-dimensional nonlinear {S}chr\"odinger equation with random
  radial data}, Anal. PDE \textbf{5} (2012), no.~5, 913--960.

\bibitem{GlimJaf}
James Glimm and Arthur Jaffe, \emph{Quantum physics}, second ed.,
  Springer-Verlag, New York, 1987, A functional integral point of view.
  \MR{887102}

\bibitem{skoro}
Olav Kallenberg, \emph{Foundations of modern probability}, second ed.,
  Probability and its Applications (New York), Springer-Verlag, New York, 2002.

\bibitem{prok}
Leonid~B. Koralov and Yakov~G. Sinai, \emph{Theory of probability and random
  processes}, second ed., Universitext, Springer, Berlin, 2007.

\bibitem{LebRosSpe}
Joel~L. Lebowitz, Harvey~A. Rose, and Eugene~R. Speer, \emph{Statistical
  mechanics of the nonlinear {S}chr\"odinger equation}, J. Statist. Phys.
  \textbf{50} (1988), no.~3-4, 657--687.

\bibitem{luhmen}
Jonas L{\"u}hrmann and Dana Mendelson, \emph{Random data {C}auchy theory for
  nonlinear wave equations of power-type on {$\mathbb {R}^3$}}, Comm. Partial
  Differential Equations \textbf{39} (2014), no.~12, 2262--2283.

\bibitem{mckeanvan}
H.~P. McKean and K.~L. Vaninsky, \emph{Statistical mechanics of nonlinear wave
  equations}, Stochastic analysis ({I}thaca, {NY}, 1993), Proc. Sympos. Pure
  Math., vol.~57, Amer. Math. Soc., Providence, RI, 1995, pp.~457--463.

\bibitem{nahstaff}
Andrea~R. Nahmod, Tadahiro Oh, Luc Rey-Bellet, and Gigliola Staffilani,
  \emph{Invariant weighted {W}iener measures and almost sure global
  well-posedness for the periodic derivative {NLS}}, J. Eur. Math. Soc. (JEMS)
  \textbf{14} (2012), no.~4, 1275--1330. \MR{2928851}

\bibitem{randoh2}
T.~{Oh} and O.~{Pocovnicu}, \emph{{Probabilistic global well-posedness of the
  energy-critical defocusing quintic nonlinear wave equation on
  $\mathbb{R}^3$}},  J. Math. Pures Appl. (9) 105 (2016), no. 3, 342?366.

\bibitem{ORT}
T.~{Oh}, G.~{Richards}, and L.~{Thomann}, \emph{{On invariant Gibbs measures
  for the generalized KdV equations}},  Dyn. Partial Differ. Equ. 13 (2016), no. 2, 133--153. 

\bibitem{OhTho}
T.~{Oh} and L.~{Thomann}, \emph{Invariant gibbs measures for the 2-d defocusing
  nonlinear schr\"odinger equations}, arXiv:1703.10452v2 [math.AP].

\bibitem{ohtzv}
T.~{Oh} and N.~{Tzvetkov}, \emph{Quasi-invariant gaussian measures for the
  cubic fourth order nonlinear schr\"odinger equation},  Probab. Theory Related Fields 169 (2017), no. 3-4, 1121?1168.

\bibitem{ReeSim4}
Michael Reed and Barry Simon, \emph{Methods of modern mathematical physics.
  {IV}. {A}nalysis of operators}, Academic Press [Harcourt Brace Jovanovich,
  Publishers], New York-London, 1978. \MR{0493421}

\bibitem{simonPphi2}
Barry Simon, \emph{The {$P(\phi )_{2}$} {E}uclidean (quantum) field theory},
  Princeton University Press, Princeton, N.J., 1974, Princeton Series in
  Physics.

\bibitem{SIMfunint}
\bysame, \emph{Functional integration and quantum physics}, second ed., AMS
  Chelsea Publishing, Providence, RI, 2005.

\bibitem{staffilani}
Vedran Sohinger and Gigliola Staffilani, \emph{Ranzomization and the
  {G}ross-{P}itaevskii hierarchy},  Arch. Ration. Mech. Anal. 218 (2015), no. 1, 417?485.

\bibitem{xu}
Samantha Xu, \emph{Invariant {G}ibbs measure for {3D NLW} in infinite volume},
arXiv:1405.3856v1 [math.AP].

\bibitem{Zhidkov}
P.~E. Zhidkov, \emph{On invariant measures for some infinite-dimensional
  dynamical systems}, Ann. Inst. H. Poincar\'e Phys. Th\'eor. \textbf{62}
  (1995), no.~3, 267--287. \MR{1335059 (96g:58116)}

\end{thebibliography}
\nocite{*}

\end{document}